\numberwithin{equation}{section} 
\makeatletter \@addtoreset{equation}{section}
\makeatletter \@addtoreset{lemma}{section}
\makeatletter \@addtoreset{theorem}{section}
\makeatletter \@addtoreset{proposition}{section}
\makeatletter \@addtoreset{corollary}{section}
\makeatletter \@addtoreset{remark}{section}
\makeatletter \@addtoreset{definition}{section}
\makeatletter \@addtoreset{example}{section}
\begin{document}

\thispagestyle{firstpg}

\vspace*{1.5pc} \noindent \normalsize\textbf{\Large {Large deviation rates for supercritical multitype branching processes with immigration}} \hfill

\vspace{12pt}
\hspace*{0.75pc}{\small\textrm{\uppercase{Liuyan Li}}}
\hspace{-2pt}$^{*}$, {\small\textit{Guangdong University of Finance and Economics; Central
South University}}

\hspace*{0.75pc}{\small\textrm{\uppercase{Junping Li
}}}\hspace{-2pt}$^{**}$, {\small\textit{Guangdong University of Science and Technology; Central South University }}


\par
\footnote{\hspace*{-0.75pc}$^{*}\,$Postal
address: School of Statistics and Data Science, Guangdong University of Finance and Economics, Guangzhou, 510320, China. E-mail address:
mathlily@gdufe.edu.cn}

\par
\footnote{\hspace*{-0.75pc}$^{**}\,$Postal address:
Guangdong university of science and technology, Dongguan, 523083; School of Mathematics and Statistics, Central
South University, Changsha, 410083, China. E-mail address:
jpli@mail.csu.edu.cn }


\par
\renewenvironment{abstract}{%
\vspace{8pt} \vspace{0.1pc} \hspace*{0.25pc}
\begin{minipage}{14cm}
\footnotesize
{\bf Abstract}\\[1ex]
\hspace*{0.5pc}} {\end{minipage}}
\begin{abstract}
 Let $\{X_n\}_{n\geq0}$ be a $p$-type ($p\geq2$) supercritical branching process with immigration and mean matrix $M$. Suppose that $M$ is positively regular and $\rho$ is the maximal eigenvalue of $M$ with the corresponding left and right eigenvectors $\boldsymbol{v}$ and $\boldsymbol{u}$. Let $\rho>1$ and $Y_n=\rho^{-n}\Big[\boldsymbol{u}\cdot X_n -\frac{\rho^{n+1}-1}{\rho-1}( \boldsymbol{u}\cdot \boldsymbol{\lambda})\Big]$, where the vector $\boldsymbol{\lambda}$ denotes the mean immigration rate. In this paper, we will show that $Y_n$ is a martingale and converges to a $r.v.$ $Y$ as $n\rightarrow\infty$. We study the rates of convergence to $0$ as $n\rightarrow\infty$ of
 $$
 P_i\Big(\Big|\frac{\boldsymbol{l}\cdot X_{n+1}}{\textbf{1}\cdot X_n}-\frac{\boldsymbol{l}\cdot(X_nM)}{\textbf{1}\cdot X_n}\Big|>\varepsilon\Big),P_i\Big(\Big|\frac{\boldsymbol{l}\cdot X_n}{\textbf{1}\cdot X_n}-\frac{\boldsymbol{l}\cdot\boldsymbol{v}}{\textbf{1}\cdot \boldsymbol{v} }\Big|>\varepsilon\Big),P\Big(\Big|Y_n-Y\Big|>\varepsilon\Big)
 $$
  for any $\varepsilon>0, i=1,\cdots,p$, $\textbf{1}=(1,\cdots,1)$ and $\boldsymbol{l}\in\mathbb{R}^p,$ the $p$-dimensional Euclidean space. It is shown that under certain moment conditions, the first two decay geometrically, while conditionally on the event $Y\geq\alpha$ $(\alpha>0)$ supergeometrically. The decay rate of the last probability is always supergeometric under a finite moment generating function assumption.
\end{abstract}

\vspace*{12pt} \hspace*{2.25pc}
\parbox[b]{26.75pc}{{
}}
{\footnotesize {\bf Keywords:}
Large deviation; multitype; supercritical branching processes; immigration. }
\par
\normalsize

\renewcommand{\amsprimary}[1]{
     \vspace*{8pt}
     \hspace*{2.25pc}
     \parbox[b]{24.75pc}{\scriptsize
    AMS 2000 Subject Classification: Primary 60J27 Secondary 60J35
     {\uppercase{#1}}}\par\normalsize}
\renewcommand{\ams}[2]{
     \vspace*{8pt}
     \hspace*{2.25pc}
     \parbox[b]{24.75pc}{\scriptsize
     AMS 2020 SUBJECT CLASSIFICATION: PRIMARY
     {\uppercase{#1}}\\ \phantom{
     AMS 2020
     SUBJECT CLASSIFICATION:
     }
    SECONDARY
 {\uppercase{#2}}}\par\normalsize}

\ams{60J27}{60J35}

\par
\vspace{5mm}
 \setcounter{section}{1}
 \setcounter{equation}{0}
 \setcounter{theorem}{0}
 \setcounter{lemma}{0}
 \setcounter{corollary}{0}
\noindent {\large \bf 1. Introduction}
\vspace{3mm}
\par
 Let $\big\{Z_n=\big(Z_n^{(1)},\cdots,Z_n^{(p)}\big)\big\}_{n\geq 0}$ be a $p$-type $(p\geq2)$ supercritical branching process
   with mean matrix $M$, where $Z_n^{(r)}$$(1\leq r\leq p)$ denotes the number of $r$'th type particles in the $n$'th generation.
    Suppose that there is an immigration of $I_n=\big(I_n^{(1)},\cdots,I_n^{(p)}\big)$ particles at time $n$ into the population,
    where $I_n^{(i)}$$(1\leq i\leq p)$ is the number of $i$'th type particles in the $n$'th immigration. By Kaplan \cite{KAP74},
    a $p$-type supercritical branching process with immigration $\{X_n\}_{n\geq0}$ can be defined as
   \begin{equation}\label{1.1}
   X_n=Z_n+U_n^{(1)}+U_n^{(2)}+\cdots+U_n^{(n)},\ \ n\ge1,
  \end{equation}
   for fixed $1\leq k\leq n,$ random vector $U_n^{(k)}=\big(U_{n,1}^{(k)},\cdots,U_{n,p}^{(k)}\big),$ where the $j$'th component $U_{n,j}^{(k)}$
   is the number of $j$'th type descendants in the $n$'th generation from the $k$'th immigration $I_k$. We note that $U_n^{(n)}=I_n$
   and all the particles in the system are independent. The new particles behave as a $p$-type branching process. For convenience, we consider $p=2$ in this paper.
\par
Let $\mathbb{N}$ be the set of natural numbers and $\mathbb{N}^2=\{\emph{\textbf{j}}=(j_1,j_2):j_1\in \mathbb{N},j_2\in\mathbb{N}\}$. For
   $\emph{\textbf{s}}\in [0,1]^2$ and $\emph{\textbf{j}}\in \mathbb{N}^2,$
   ${\emph{\textbf{s}}}^{\emph{\textbf{j}}}=s_1^{j_1}s_2^{j_2}.$
   Denote $\textbf{1}=(1,1),$ $\textbf{0}=(0,0),$ $e_1=(1,0)$ and  $e_2=(0,1).$  Let
  $f_1^{(i)}(\emph{\textbf{s}})$ be the generating function of $Z_1$ with $Z_0=e_i$ for $i=1,2,$ then
 $$
  f_1^{(i)}(\emph{\textbf{s}})=E[\emph{\textbf{s}}^{Z_1}|Z_0=e_i]
  =\sum_{\emph{\textbf{j}}\in \mathbb{N}^2} P_i(j_1,j_2)s_1^{j_1}s_2^{j_2},
 $$
 where $P_i(j_1,j_2)=P\big(Z_1^{(1)}=j_1,Z_1^{(2)}=j_2|Z_0=e_i\big).$
 Let $f_n^{(i)}(\emph{\textbf{s}})$ be the generating function of $Z_n$ with $Z_0=e_i$ for $i=1,2.$  Then we can write
  $$
  f_n(\emph{\textbf{s}})= \big(f_n^{(1)}(\emph{\textbf{s}}),f_n^{(2)}(\emph{\textbf{s}})\big).
  $$
 In particular, $f_0(\emph{\textbf{s}})\equiv \emph{\textbf{s}}, f(\emph{\textbf{s}})\equiv f_1(\emph{\textbf{s}}).$
 It is known that (see \cite{AKN72}) $f_{n+1}(\emph{\textbf{s}})=f(f_n(\emph{\textbf{s}}))$.
 \par
 Assume that $\{I_n\}_{n\geq1}$ are $i.i.d.$ $2$-dimensional random vectors and have generating function $h(\emph{\textbf{s}}):=\sum\limits_{\emph{\textbf{j}}\in \mathbb{N}^2} h_{\emph{\textbf{j}}}s_1^{j_1}s_2^{j_2}$.
 Let $E(I_n)=\boldsymbol{\lambda}=(\lambda_1,\lambda_2)$ and $h_0=P(I_n=\bf{0}).$
   Let $g_n^{(i)}(\emph{\textbf{s}})$ be the generating function of $X_n$ with $X_0=e_i,$ then by $(1.1)$ we see that
$$
g_n^{(i)}(\emph{\textbf{s}})
=E\big[\emph{\textbf{s}}^{X_n}|X_0=e_i\big]
=f_n^{(i)}(\emph{\textbf{s}})\prod_{k=1}^{n}h(f_{n-k}(\emph{\textbf{s}})),
$$
 where $h(f_{n-k}(\emph{\textbf{s}}))$ is the generating function of $U_n^{(k)}.$
We can also write
 \begin{equation}\label{1.2}
 g_n(\emph{\textbf{s}})=\big(g_n^{(1)}(\emph{\textbf{s}}),g_n^{(2)}(\emph{\textbf{s}})\big)
 =f_n(\emph{\textbf{s}})\prod_{k=0}^{n-1}h(f_k(\emph{\textbf{s}})).
 \end{equation}
 In particular, $g_0(\emph{\textbf{s}})\equiv \emph{\textbf{s}}$, $g(\emph{\textbf{s}})\equiv g_1(\emph{\textbf{s}}).$ Define
 $D_{ij}(\emph{\textbf{s}})=\frac{\partial f_1^{(i)}(\emph{\textbf{s}})}{\partial s_j}$ for $i,j=1,2$
  and $\emph{\textbf{s}}\in [0,1]^2.$ Denote $a_{ij}=D_{ij}(\textbf{0}), m_{ij}=D_{ij}(\textbf{1}^{-})$, $A=(a_{ij})$. $M=(m_{ij})$ is called the mean matrix.
\par
    Throughout this paper, we suppose that $f(\textbf{0})=\textbf{0}$ (meaning that every particle will not die) and $M$ is positively regular which ensures the existence of maximum eigenvalue $\rho$ of $M$ (see \cite{JAP67} or \cite{KAR66}).
 We assume that $\rho>1$ (in this case that the extinction probability of $\{Z_n\}$ is less than $1$) and $\boldsymbol{v}$ and $\boldsymbol{u}$ are the corresponding left and right eigenvector. It is known from \cite{JAP67} or \cite{KAR66} that all components of $\boldsymbol{v}$ and $\boldsymbol{u}$ are positive, $\boldsymbol{v}\cdot\boldsymbol{u}=1$ and $\boldsymbol{u}\cdot\boldsymbol{1}=1$, where the normalization being assumed for the sake of convenience.
  Let $P_i(\cdot)=P(\cdot|X_0=e_i)$ be the probability measure
 for the process conditioned on $X_0=e_i$ and $E_i(\cdot)=E(\cdot|X_0=e_i)$ be the corresponding expectation for $i=1,2.$
 \par
  Let $\mathbb{R}^2$ be the $2$-dimensional Euclidean space. It is known from \cite{AV95} that for any $\boldsymbol{l}$ in $\mathbb{R}^2,$
 $$
 \frac{\boldsymbol{l}\cdot Z_{n+1}}{\textbf{1}\cdot Z_n}-\frac{\boldsymbol{l}\cdot(Z_nM)}{\textbf{1}\cdot Z_n}\ \ \ \ \text{and} \ \ \ \
  \frac{\boldsymbol{l}\cdot Z_n}{\textbf{1}\cdot Z_n}-\frac{\boldsymbol{l}\cdot\boldsymbol{v}}{\boldsymbol{1}\cdot \boldsymbol{v} }
 $$
 converge to 0 with probability 1 (w.p.1) on the set of nonextinction. Athreya and Vidyashan-kar \cite{AV95} studied
 the large deviation aspects of the above convergence. This paper extends the convergence and large deviation
 results in \cite{AV95} to supercritical multitype branching processes with immigration.
 \par
 In this paper, we will show that $Y_n=\rho^{-n}\big(\boldsymbol{u}\cdot X_n-\frac{\rho^{n+1}-1}{\rho-1}(\boldsymbol{u}\cdot \boldsymbol{\lambda})\big)$
 is a martingale and converges to a $r.v.$ $Y$ w.p.1.
 Then we will study the rates of convergence to $0$ as $n\to\infty$ of
  $$
  P_i\Big(\Big|\frac{\boldsymbol{l}\cdot X_{n+1}}{\textbf{1}\cdot X_n}-\frac{\boldsymbol{l}\cdot(X_nM)}{\textbf{1}\cdot X_n}\Big|>\varepsilon\Big)
  ,\ \ \ P_i\Big(\Big|\frac{\boldsymbol{l}\cdot X_n}{\textbf{1}\cdot X_n}-\frac{\boldsymbol{l}\cdot \boldsymbol{v} }{\textbf{1}\cdot \boldsymbol{v} }\Big|>\varepsilon\Big),
   \ \ \ P(|Y_n-Y|>\varepsilon)
  $$
  $$
  P_i\Big(\Big|\frac{\boldsymbol{l}\cdot X_{n+1}}{\textbf{1}\cdot X_n}-\frac{\boldsymbol{l}\cdot(X_nM)}
  {\textbf{1}\cdot X_n}\Big|>\varepsilon\Big|Y\geq\alpha\Big),\ \ \ \
  P_i\Big(\Big|\frac{\boldsymbol{l}\cdot X_n}{\textbf{1}\cdot X_n}-\frac{\boldsymbol{l}\cdot \boldsymbol{v} }{\textbf{1}\cdot \boldsymbol{v} }\Big|>\varepsilon\Big|Y\geq\alpha\Big)
  $$
  for any $\varepsilon>0, \boldsymbol{l}\in\mathbb{R}^2, i=1,2$ and $\alpha>0.$
  \par
  Investigating the rate of convergence of these probabilities is not only interesting in its own right, but also arises in computer science applications (see Karp and Zhang \cite{KZ94}). Athreya \cite{Ath94} studied
    the large deviations for single-type Galton-Watson branching processes. On the basis of Athreya \cite{Ath94},
     Li L. and Li J. \cite{LL19} obtained the decay rates corresponding to the above third and fourth large deviation probabilities
     for a single-type Galton-Watson process with immigration. Later, Li J, et al. \cite{LCCL19,LCL19} investigated
      the above large deviation probabilities for single-type continuous time branching processes.
Liu and Zhang \cite{LZ16} studied the decay rate of the first probability for a single-type Galton-Watson process with immigration.
\par
\vspace{5mm}
 \setcounter{section}{1}
 \setcounter{equation}{0}
 \setcounter{theorem}{0}
 \setcounter{lemma}{0}
 \setcounter{corollary}{0}
\noindent {\large \bf 2. Main results}
\vspace{3mm}
\par
\par
\par
The first theorem is a large deviation result of the process under an exponential moment hypothesis on the offspring and immigration distribution functions. It can be obtained that the two probabilities in Theorem~\ref{th3.1} below decay to $0$ with geometrical rates. In the non-immigration and single-type cases, the geometrical decay rate was proved by Athreya (\cite{Ath94, AV95}) under an exponential moment hypothesis on the offspring distribution function.

\begin{theorem}\label{th3.1} Assume that $h_0>0$ and there is $0<\gamma<1$ such that $A^n\gamma^{-n}$ converges
to a matrix $P_0$ which is nonzero and has finite entries. Suppose that
 $ E_i(e^{\theta_0(\boldsymbol{1}\cdot X_1)})<\infty$
for some $\theta_0>0$ and $i=1,2.$ Let $\boldsymbol{l}=(l_1, l_2)$ be a nonzero vector with $l_1\neq l_2.$
 Then, for any $\varepsilon>0$ and $i=1,2,$
\begin{equation}\label{3.1}
\lim_{n\rightarrow\infty}(h_0\gamma)^{-n}P_i\Big(\Big|\frac{\boldsymbol{l}\cdot X_{n+1}}{\emph{\textbf{1}}\cdot X_n}
-\frac{\boldsymbol{l}\cdot(X_nM)}{\emph{\textbf{1}}\cdot X_n}\Big|>\varepsilon\Big)
=\sum_{\boldsymbol{j}\in\mathbb{N}^2\setminus\{\boldsymbol{0}\}}\phi(\boldsymbol{j},\varepsilon)r_{i,\boldsymbol{j}}<\infty,
\end{equation}
 and there is positive integer $k_0$ such that
\begin{equation}\label{3.2}
\lim_{n\rightarrow\infty}(h_0\gamma)^{-n}P_i\Big(\Big|\frac{\boldsymbol{l}\cdot X_n}{\emph{\textbf{1}}\cdot X_n}
-\frac{\boldsymbol{l}\cdot \boldsymbol{v} }{\emph{\textbf{1}}\cdot \boldsymbol{v} }\Big|>\varepsilon\Big)
=(h_0\gamma)^{k_0}\sum_{\boldsymbol{j}\in\mathbb{N}^2\setminus\{\boldsymbol{0}\}}
\hat{\phi}(\boldsymbol{j},k_0,\varepsilon)r_{i,\boldsymbol{j}}<\infty,
\end{equation}
 where
$$
\phi(\boldsymbol{j},\varepsilon)=P(|\boldsymbol{l}\cdot X_{1}-\boldsymbol{l}\cdot(\boldsymbol{j}M)|
>\varepsilon(\textbf{1}\cdot \boldsymbol{j})|X_0=\boldsymbol{j}),
$$
$r_{i,\boldsymbol{j}}$ is given by \eqref{2.4a} below and
$$
\hat{\phi}(\boldsymbol{j},k_0,\varepsilon)= P\Big(\Big|\frac{\boldsymbol{l}\cdot X_{k_0}}{\textbf{1}\cdot X_{k_0}}-\frac{\boldsymbol{l}\cdot \boldsymbol{v}}{\textbf{1}\cdot \boldsymbol{v} }\Big|>\varepsilon\Big|X_{0}=\boldsymbol{j}\Big).
$$
\end{theorem}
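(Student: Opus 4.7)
The plan is to adapt the approach of Athreya~\cite{Ath94} and Athreya--Vidyashankar~\cite{AV95} from the non-immigration setting, with the factor $h_0$ from trivial immigration epochs accounting for the new base $h_0\gamma$ of the geometric rate. The central observation is that each of the two probabilities can be expressed, via the Markov property at a suitable time, as a sum of the form $\sum_{\boldsymbol{j}\neq\boldsymbol{0}}P_i(X_m=\boldsymbol{j})\,\psi(\boldsymbol{j})$ with $\psi$ equal to either $\phi(\cdot,\varepsilon)$ or $\hat\phi(\cdot,k_0,\varepsilon)$. The whole argument then reduces to a Harris-type return-time estimate
\begin{equation*}
(h_0\gamma)^{-n}\,P_i(X_n=\boldsymbol{j})\longrightarrow r_{i,\boldsymbol{j}}\quad(n\to\infty),\qquad \boldsymbol{j}\in\mathbb{N}^2\setminus\{\boldsymbol{0}\},
\end{equation*}
with $r_{i,\boldsymbol{j}}$ the quantity defined in \eqref{2.4a}, together with a dominated-convergence argument that justifies passing the limit under the infinite sum in $\boldsymbol{j}$.

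For the Harris-type asymptotic I would decompose $\{X_n=\boldsymbol{j}\}$ according to the number of nontrivial immigrations $I_k\neq\boldsymbol{0}$. The dominant term corresponds to $\{I_1=\cdots=I_n=\boldsymbol{0},\,Z_n=\boldsymbol{j}\}$, with probability $h_0^n P_i(Z_n=\boldsymbol{j})$; under the hypothesis $A^n\gamma^{-n}\to P_0$, the classical multitype Harris theorem applied to the pure branching process gives $\gamma^{-n}P_i(Z_n=\boldsymbol{j})\to\tilde r_{i,\boldsymbol{j}}$ for some finite nonnegative limit, and contributions from paths with at least one nontrivial immigration, treated by applying the same branching asymptotic to descendants of each immigrant, furnish the remaining summands assembling $r_{i,\boldsymbol{j}}$. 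Granting this limit, \eqref{3.1} follows because the Markov property at time $n$ yields $P_i(|\cdots|>\varepsilon)=\sum_{\boldsymbol{j}\neq\boldsymbol{0}}P_i(X_n=\boldsymbol{j})\phi(\boldsymbol{j},\varepsilon)$ and termwise passage to the limit reproduces the formula. For \eqref{3.2} the deviation event is $\sigma(X_n)$-measurable, so conditioning on $X_n$ would collapse the probability into a deterministic indicator and give a trivial zero; one instead conditions on $X_{n-k_0}$ for a minimal $k_0$ such that $\hat\phi(\boldsymbol{j},k_0,\varepsilon)\neq 0$ for some $\boldsymbol{j}$, after which the displayed prefactor involving $(h_0\gamma)^{k_0}$ arises from regrouping shifted powers of $h_0\gamma$.

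The principal obstacle is justifying the interchange of limit and infinite sum, and here the exponential moment hypothesis $E_i(e^{\theta_0(\boldsymbol{1}\cdot X_1)})<\infty$ is essential. A Chernoff-type bound applied to $\boldsymbol{l}\cdot X_1$ conditional on $X_0=\boldsymbol{j}$, using the independence of reproduction of the $\boldsymbol{1}\cdot\boldsymbol{j}$ particles present, gives $\phi(\boldsymbol{j},\varepsilon)\leq Ce^{-c(\boldsymbol{1}\cdot\boldsymbol{j})}$, and an analogous bound for $\hat\phi$ follows after a $k_0$-step iteration of the generating-function identity \eqref{1.2}. Combined with a uniform-in-$n$ exponential upper bound on $(h_0\gamma)^{-n}P_i(X_n=\boldsymbol{j})$ derived by iterating \eqref{1.2} and exploiting the spectral gap encoded in $A^n\gamma^{-n}\to P_0$, these estimates supply the required domination. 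Controlling the immigration-contaminated contributions uniformly in $n$---verifying that each nontrivial immigration indeed costs a geometric factor strictly smaller than $h_0\gamma$ rather than comparable to it---will be the most delicate step of the argument.
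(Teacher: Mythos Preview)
Your overall architecture---condition on $X_n$ (resp.\ $X_{n-k_0}$), invoke the Harris-type limit \eqref{2.4a}, bound $\phi$ and $\hat\phi$ exponentially via Chernoff, and pass to the limit by dominated convergence---matches the paper. But your account of the second limit \eqref{3.2} contains a genuine gap: the reason for introducing $k_0$ is \emph{not} to make $\hat\phi(\boldsymbol{j},k_0,\varepsilon)$ nonzero (it is already nonzero for $k_0=1$ and many $\boldsymbol{j}$). The real issue is that a Chernoff bound controls deviations of $\boldsymbol{l}\cdot X_n$ from its \emph{conditional mean} $\boldsymbol{l}\cdot(\boldsymbol{j}M^{k_0}+\sum_{i=0}^{k_0-1}\boldsymbol{\lambda}M^i)$, not from the fixed target $\frac{\boldsymbol{l}\cdot\boldsymbol{v}}{\boldsymbol{1}\cdot\boldsymbol{v}}(\boldsymbol{1}\cdot X_n)$. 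For $\boldsymbol{j}$ whose direction is far from $\boldsymbol{v}$, these differ by a quantity of order $\boldsymbol{1}\cdot\boldsymbol{j}$, which would swamp the $\varepsilon$-margin and destroy the bound $\hat\phi(\boldsymbol{j},k_0,\varepsilon)=O(\lambda^{\boldsymbol{1}\cdot\boldsymbol{j}})$. The paper resolves this with a separate Perron--Frobenius estimate (its Proposition~\ref{pro2.4}): one chooses $k_0$ large enough that $\frac{\boldsymbol{j}M^{k_0}+\sum_{i<k_0}\boldsymbol{\lambda}M^i}{\rho^{k_0}\boldsymbol{u}\cdot(\boldsymbol{j}+\cdots)}$ is uniformly close to $\boldsymbol{v}$ over all $\boldsymbol{j}$, so that after recentering by the conditional mean one still has a margin proportional to $\varepsilon(\boldsymbol{1}\cdot\boldsymbol{v})$ on the correct side. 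Without this uniform alignment step your Chernoff argument for $\hat\phi$ will not close.

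A secondary remark: your proposed route to the Harris asymptotic---decomposing $\{X_n=\boldsymbol{j}\}$ by the number of nontrivial immigrations and worrying about whether each nontrivial immigration ``costs a geometric factor strictly smaller than $h_0\gamma$''---is more delicate than necessary. The paper instead works at the level of generating functions (Proposition~\ref{pro2.1}): it shows $g_n(\boldsymbol{s})/(h_0\gamma)^n\to R(\boldsymbol{s})$ by writing $g_n=f_n\prod_{k<n}h(f_k)$, citing the known limit $f_n/\gamma^n\to\widetilde Q$ from \cite{AV95}, and proving the infinite product $\prod_k h(f_k(\boldsymbol{s}))/h_0$ converges via $\sum_k(h(f_k(\boldsymbol{s}))-h_0)<\infty$ (a one-line mean-value-theorem estimate). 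The coefficient limit \eqref{2.4a} then drops out of the power-series representation of $R_i$. This bypasses the combinatorial decomposition entirely and also delivers, for free, the summability $\sum_{\boldsymbol{j}}\lambda_0^{\boldsymbol{1}\cdot\boldsymbol{j}}r_{i,\boldsymbol{j}}=R_i(\lambda_0,\lambda_0)<\infty$ needed for the dominated-convergence step, so you do not need a separate uniform-in-$n$ bound on $(h_0\gamma)^{-n}P_i(X_n=\boldsymbol{j})$.
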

\par
Similar to what Athreya and Vidyashankar have showed in the non-immigration case \cite{AV95}, the exponential condition in Theorem \ref{th3.1} can be weakened to a polynomial condition.
\begin{theorem}\label{th3.2} Assume that $h_0>0$ and there is $0<\gamma<1$ such that $A^n\gamma^{-n}$ converges
 to a matrix $P_0$ which is nonzero and has finite entries. Assume that
  $E_i\big((\emph{\textbf{1}}\cdot Z_1)^{2s}\big)<\infty$, $E\big((\emph{\textbf{1}}\cdot I_1)^{\kappa}\big)<\infty$ for some $s\geq1, \kappa\geq1, i=1,2$ and $h_0\gamma\rho^{d}>1$, where $d=\min\{s,\kappa\}$. Let $\boldsymbol{l}=(l_1, l_2)$ be a nonzero vector
 with $l_1\neq l_2.$ Then, for any $\varepsilon>0$ and $i=1,2,$ (\ref{3.1}) and (\ref{3.2}) also hold.
\end{theorem}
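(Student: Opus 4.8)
The plan is to follow the proof of Theorem~\ref{th3.1} essentially line by line, the only substantive change being that the exponential Chernoff estimate for the conditional probabilities $\phi$ and $\hat{\phi}$ is replaced by a Markov inequality combined with the Marcinkiewicz--Zygmund inequality, with the hypothesis $h_0\gamma\rho^{d}>1$ taking over the role that the exponential moment played there. First I would record the two elementary reductions furnished by the Markov property: conditioning on $X_n$,
\begin{equation}\label{plan1}
P_i\Big(\Big|\frac{\boldsymbol{l}\cdot X_{n+1}}{\boldsymbol{1}\cdot X_n}-\frac{\boldsymbol{l}\cdot(X_nM)}{\boldsymbol{1}\cdot X_n}\Big|>\varepsilon\Big)=\sum_{\boldsymbol{j}\in\mathbb{N}^2\setminus\{\boldsymbol{0}\}}P_i(X_n=\boldsymbol{j})\,\phi(\boldsymbol{j},\varepsilon),
\end{equation}
the range being $\boldsymbol{j}\neq\boldsymbol{0}$ because $f(\boldsymbol{0})=\boldsymbol{0}$ forces $\boldsymbol{1}\cdot X_n\ge1$; and, conditioning on the state $k_0$ steps before time $n$,
\begin{equation}\label{plan2}
P_i\Big(\Big|\frac{\boldsymbol{l}\cdot X_n}{\boldsymbol{1}\cdot X_n}-\frac{\boldsymbol{l}\cdot\boldsymbol{v}}{\boldsymbol{1}\cdot\boldsymbol{v}}\Big|>\varepsilon\Big)=\sum_{\boldsymbol{j}\in\mathbb{N}^2\setminus\{\boldsymbol{0}\}}P_i(X_{n-k_0}=\boldsymbol{j})\,\hat{\phi}(\boldsymbol{j},k_0,\varepsilon),
\end{equation}
where $k_0$ is fixed once and for all so large that $\big|\boldsymbol{l}\cdot(\boldsymbol{j}M^{k_0})/(\boldsymbol{1}\cdot(\boldsymbol{j}M^{k_0}))-\boldsymbol{l}\cdot\boldsymbol{v}/(\boldsymbol{1}\cdot\boldsymbol{v})\big|<\varepsilon/2$ for all $\boldsymbol{j}\neq\boldsymbol{0}$ --- possible because $\boldsymbol{j}M^{k_0}/\rho^{k_0}\to(\boldsymbol{j}\cdot\boldsymbol{u})\boldsymbol{v}$ uniformly over $\boldsymbol{j}\neq\boldsymbol{0}$. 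Multiplying \eqref{plan1}--\eqref{plan2} by $(h_0\gamma)^{-n}$ and invoking the convergence $(h_0\gamma)^{-n}P_i(X_n=\boldsymbol{j})\to r_{i,\boldsymbol{j}}$ that defines $r_{i,\boldsymbol{j}}$ in \eqref{2.4a} (the fixed shift $n\mapsto n-k_0$ in \eqref{plan2} only contributing a power of $h_0\gamma$), the theorem reduces to a summable bound on $\phi$ and $\hat{\phi}$ together with a uniform-in-$n$ tail estimate justifying the interchange of limit and summation.

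For the bound on $\phi$: given $X_0=\boldsymbol{j}$, one has $\boldsymbol{l}\cdot X_1-\boldsymbol{l}\cdot(\boldsymbol{j}M)=S_{\boldsymbol{j}}+\boldsymbol{l}\cdot(I_1-\boldsymbol{\lambda})+\boldsymbol{l}\cdot\boldsymbol{\lambda}$, where $S_{\boldsymbol{j}}$ is the $\boldsymbol{l}$-projection of a sum of $\boldsymbol{1}\cdot\boldsymbol{j}$ independent, within each type identically distributed, centred offspring vectors. Once $\boldsymbol{1}\cdot\boldsymbol{j}$ is large enough that $|\boldsymbol{l}\cdot\boldsymbol{\lambda}|\le\tfrac{\varepsilon}{4}(\boldsymbol{1}\cdot\boldsymbol{j})$, the event in $\phi(\boldsymbol{j},\varepsilon)$ is contained in $\{|S_{\boldsymbol{j}}|>\tfrac{3\varepsilon}{8}(\boldsymbol{1}\cdot\boldsymbol{j})\}\cup\{|\boldsymbol{l}\cdot(I_1-\boldsymbol{\lambda})|>\tfrac{3\varepsilon}{8}(\boldsymbol{1}\cdot\boldsymbol{j})\}$; estimating the first probability by Markov at order $2s$ together with the Marcinkiewicz--Zygmund inequality (which gives $E|S_{\boldsymbol{j}}|^{2s}\le C(\boldsymbol{1}\cdot\boldsymbol{j})^{s}$, using $E_i((\boldsymbol{1}\cdot Z_1)^{2s})<\infty$ and $s\ge1$) and the second by Markov at order $\kappa$ (using $E((\boldsymbol{1}\cdot I_1)^{\kappa})<\infty$) yields $\phi(\boldsymbol{j},\varepsilon)\le C(\boldsymbol{1}\cdot\boldsymbol{j})^{-s}+C(\boldsymbol{1}\cdot\boldsymbol{j})^{-\kappa}\le C'(\boldsymbol{1}\cdot\boldsymbol{j})^{-d}$. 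An entirely parallel computation, applied after the fixed $k_0$ steps to the numerator and denominator of $\boldsymbol{l}\cdot X_{k_0}/(\boldsymbol{1}\cdot X_{k_0})$ --- the offspring fluctuations handled again by Marcinkiewicz--Zygmund and the $k_0$-fold immigration contribution, which is $O(1)$ in $\boldsymbol{j}$, handled crudely --- gives $\hat{\phi}(\boldsymbol{j},k_0,\varepsilon)\le C(\boldsymbol{1}\cdot\boldsymbol{j})^{-d}$ for $\boldsymbol{1}\cdot\boldsymbol{j}$ large; the finitely many remaining $\boldsymbol{j}$ are dealt with by $\phi,\hat{\phi}\le1$.

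The crux --- and the only place where the proof departs in substance from that of Theorem~\ref{th3.1} --- is the tail estimate
\begin{equation}\label{plantail}
\sup_{n}\ (h_0\gamma)^{-n}\!\!\sum_{\boldsymbol{j}\,:\,\boldsymbol{1}\cdot\boldsymbol{j}>L}(\boldsymbol{1}\cdot\boldsymbol{j})^{-d}\,P_i(X_n=\boldsymbol{j})\ \longrightarrow\ 0\qquad(L\to\infty).
\end{equation}
I would prove this by splitting $\boldsymbol{1}\cdot X_n$ into dyadic scales $\boldsymbol{1}\cdot X_n\asymp\rho^{\beta n}$, $\beta\in(0,1]$: keeping the population that small forces the branching part into a bounded ($A$-dominated) regime and the immigration trivial for roughly $(1-\beta)n$ generations, an event of probability of order $(h_0\gamma)^{(1-\beta)n}$ up to the subexponential corrections of \eqref{2.4a}, so that scale contributes to \eqref{plantail} at most $\rho^{-\beta nd}(h_0\gamma)^{(1-\beta)n}(h_0\gamma)^{-n}=\big((h_0\gamma\rho^{d})^{-1}\big)^{\beta n}$, which is summable over scales and $\to0$ uniformly precisely because $h_0\gamma\rho^{d}>1$; on the finitely many bounded scales, \eqref{2.4a} shows $\sum_{\boldsymbol{j}}r_{i,\boldsymbol{j}}(\boldsymbol{1}\cdot\boldsymbol{j})^{-d}<\infty$, so the part with $\boldsymbol{1}\cdot\boldsymbol{j}>L$ is small. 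Combining \eqref{plantail} with the bounds on $\phi,\hat{\phi}$ gives a dominating tail for \eqref{plan1}--\eqref{plan2}, Fatou's lemma supplies the matching lower bound, and \eqref{2.4a} identifies each fixed-scale limit; this delivers \eqref{3.1} and \eqref{3.2}, their right-hand sides being finite by the same estimates. I expect \eqref{plantail} to be the real obstacle: under the polynomial moments $\phi$ and $\hat{\phi}$ decay only like $(\boldsymbol{1}\cdot\boldsymbol{j})^{-d}$ rather than geometrically, so one can no longer use the trivial bound $P_i(\boldsymbol{1}\cdot X_n>L)\le1$ and must quantify how improbable it is for the total population to stay moderately small, which is exactly what $h_0\gamma\rho^{d}>1$ makes possible.
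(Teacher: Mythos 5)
Your first two steps follow the paper exactly: the conditioning reductions are identical, and the bound $\phi(\boldsymbol{j},\varepsilon),\hat\phi(\boldsymbol{j},k_0,\varepsilon)=O\big((\boldsymbol{1}\cdot\boldsymbol{j})^{-d}\big)$ is obtained, as in the paper, by combining the Marcinkiewicz--Zygmund-type estimate (Lemma~\ref{le3.1}) for the offspring fluctuations at order $s$ with a Markov bound at order $\kappa$ for the immigration term. Where you diverge, and where a genuine gap appears, is in the tail estimate you call the crux.

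Your dyadic argument rests on the claim that for $\beta\in(0,1)$ the event $\boldsymbol{1}\cdot X_n\asymp\rho^{\beta n}$ has probability of order $(h_0\gamma)^{(1-\beta)n}$. Nothing you invoke gives this: \eqref{2.4a} is a local limit statement $(h_0\gamma)^{-n}P_i(X_n=\boldsymbol{j})\to r_{i,\boldsymbol{j}}$ for each \emph{fixed} $\boldsymbol{j}$ and says nothing about states that grow polynomially or subgeometrically with $n$; the heuristic ``the branching part must have stayed bounded and immigration empty for $\approx(1-\beta)n$ generations'' is plausible but is itself a nontrivial lower-deviation estimate that the paper never needs and you do not prove. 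The paper sidesteps this entirely. It reduces the interchange of limit and sum to showing that $(h_0\gamma)^{-n}E_i\big((\boldsymbol{1}\cdot X_n)^{-d}\big)$ has a finite limit, then writes, via the identity $\Gamma(d)E(X^{-d})=\int_0^\infty E(e^{-tX})\,t^{d-1}dt$,
\begin{equation*}
\Gamma(d)\,E_i\big((\boldsymbol{1}\cdot X_n)^{-d}\big)=\int_0^\infty g_n^{(i)}(e^{-t},e^{-t})\,t^{d-1}\,dt,
\end{equation*}
so that after dividing by $(h_0\gamma)^n$ the problem becomes the integrability of $R_i(e^{-t},e^{-t})t^{d-1}$ near $t=0$. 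That integrability is then established by a dyadic decomposition in $t$ (not in population size), using the functional equation $h(\boldsymbol{s})R(f(\boldsymbol{s}))=h_0\gamma R(\boldsymbol{s})$ iterated $n$ times on $[\rho^{-n},\rho^{-n+1}]$ to produce the factor $(h_0\gamma\rho^d)^{-n}$, together with the uniform boundedness of $R_i\big(f_n^{(j)}(e^{-x\rho^{-n}}\boldsymbol{1})\big)$ coming from $Z_n/\rho^n\to\boldsymbol{v}W$ with $W$ nontrivial. This is exactly where $h_0\gamma\rho^d>1$ is used, as you anticipated, but the mechanism is the generating-function identity and the functional equation for $R$, not a direct estimate of $P_i(\boldsymbol{1}\cdot X_n\asymp\rho^{\beta n})$. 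To complete your proof along the paper's lines you would replace your \eqref{plantail} argument by this Gamma-representation step; as written, the key probability estimate in your sketch is unsupported.
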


\begin{remark}\label{re:1.2}
In particular, when $s=1$, $\kappa\geq 1$ and $h_0\gamma\rho>1$, the conditions for moments in Theorem \ref{th3.2} is obviously satisfied. When $s>1$, as long as $\kappa\geq 1$ and $h_0\gamma\rho^d>1$, the conditions of Theorem \ref{th3.2} regarding moments are satisfied, while in this case, $\kappa$ may be greater than or equal to $s$, or it may be less than $s$.
However, Theorem \ref{th3.2} may not hold when the immigration distribution and offspring distribution both have finite $\kappa$-order moments, for example, in the case $1\leq\kappa<2$, $E_i\big((\boldsymbol{1}\cdot Z_1)^{2}\big)$ could be infinite.
\end{remark}

 The following result obtains that the decay rates of the two probabilities in Theorem~\ref{th3.1} are supergeometric, and it extends the existing results in Athreya and Vidyashankar \cite{AV95} for the non-immigration case.

\begin{theorem}\label{th3.3}
Assume that $ E_i\big(e^{\theta_0(\emph{\textbf{1}}\cdot X_1)}\big)<\infty$
for some $\theta_0>0$ and $i=1,2.$ Suppose that
\begin{equation}\label{2.7}
\sum_{i=2}^{\infty}P_1(i,0)>0,\ \sum_{j=2}^{\infty}P_2(0,j)>0,\ \sum_{i=1}^{\infty}P(I_1=(i,0))>0\
,\sum_{j=1}^{\infty}P(I_1=(0,j))>0
\end{equation}
and
\begin{equation}\label{2.8}
P_i\big(Z_1^{(i)}<k_i\big)=0, P\big(I_1^{(i)}<d_i\big)=0 \ \ for\ i=1,2,
\end{equation}
where $k_1=\inf\{i\geq2; P_1(i,0)>0\},$ $k_2=\inf\{j\geq2; P_2(0,j)>0\}$ and $d_1=\inf\{i\geq1;
P(I_1=(i,0)>0)\},$ $d_2=\inf\{ j\geq1; P(I_1=(0,j)>0)\}.$
Then, for any nonzero vector $\boldsymbol{l}$ and $\varepsilon>0,$ there are constants $0<C_1(\varepsilon),C_2(\varepsilon)<\infty$
and $0<\mu_1(\varepsilon),\mu_2(\varepsilon)<1$ such that for $i=1,2$,
\begin{equation}\label{3.13}
  P_i\Big(\Big|\frac{\boldsymbol{l}\cdot X_{n+1}}{\emph{\textbf{1}}\cdot X_n}-\frac{\boldsymbol{l}\cdot (X_nM)}{\emph{\textbf{1}}\cdot X_n}
  \Big|>\varepsilon\Big)
  \leq C_1(\varepsilon)(\mu_1(\varepsilon))^{k_i^n}
\end{equation}
  and
\begin{equation}\label{3.14}
P_i\Big(\Big|\frac{\boldsymbol{l}\cdot X_n}{\emph{\textbf{1}}\cdot X_n}-\frac{\boldsymbol{l}\cdot \boldsymbol{v} }{\emph{\textbf{1}}\cdot \boldsymbol{v} }\Big|>\varepsilon\Big)
\leq C_2(\varepsilon)(\mu_2(\varepsilon))^{k_i^n}.
\end{equation}
\end{theorem}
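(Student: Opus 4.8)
\medskip
\noindent\emph{Outline of proof.} The plan is to exploit the fact that hypotheses~\eqref{2.7}--\eqref{2.8} force every type-$i$ particle to have at least $k_i\ge 2$ children of type $i$, so that $\mathbf{1}\cdot X_n\ge Z_n^{(i)}\ge k_i^{\,n}$ holds $P_i$-almost surely; this deterministic lower bound closes off the ``stay small'' mechanism responsible for the merely geometric rates of Theorems~\ref{th3.1}--\ref{th3.2}, so that a deviation can occur only via a genuine large-deviation fluctuation among the (at least $k_i^{\,n}$) particles present, which is supergeometrically rare. First I would record two routine preliminaries: \emph{(a)} from $E_i(e^{\theta_0(\mathbf 1\cdot X_1)})<\infty$ and $f_1^{(i)}(e^{\theta},e^{\theta})\le e^{c\theta}$ for small $\theta>0$, an induction gives, for each fixed $r$, a $\theta_r>0$ with $\max_i E_i(e^{\theta_r(\mathbf 1\cdot Z_r)})<\infty$, and likewise a finite exponential moment for the time-$n$ size of the descendants of a single immigration batch after finitely many generations; \emph{(b)} a Chernoff/Bernstein bound: for independent, centred $\xi_1,\dots,\xi_N$ with a common bound on their moment generating functions near $0$, $P(|\sum_j\xi_j|>tN)\le 2e^{-c(t)N}$ with $c(t)>0$, together with the one-sided lower-tail analogue for a sum of nonnegative exponentially integrable variables.

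For \eqref{3.13} I would condition on $\mathcal F_n$ and write $\boldsymbol l\cdot X_{n+1}-\boldsymbol l\cdot(X_nM)=[\boldsymbol l\cdot X_{n+1}-E(\boldsymbol l\cdot X_{n+1}\mid\mathcal F_n)]+\boldsymbol l\cdot\boldsymbol\lambda$; expressing $X_{n+1}$ through the offspring vectors of the $\mathbf 1\cdot X_n$ particles alive at time $n$ and through $I_{n+1}$, the bracket is a sum of $\mathbf 1\cdot X_n+1$ conditionally independent centred variables, each dominated by $\max_k|l_k|$ times a total-offspring or total-immigration count, hence exponentially integrable near $0$ by (a). Since $|\boldsymbol l\cdot\boldsymbol\lambda|\,k_i^{-n}<\varepsilon/2$ for large $n$, applying (b) on $\{X_n=\boldsymbol j\}$ (where $\mathbf 1\cdot\boldsymbol j\ge k_i^{\,n}$) bounds the conditional probability by $Ce^{-c(\varepsilon)(\mathbf 1\cdot\boldsymbol j)}\le Ce^{-c(\varepsilon)k_i^{\,n}}$; taking expectations and absorbing the finitely many small $n$ into the constant gives \eqref{3.13} with $\mu_1(\varepsilon)=e^{-c(\varepsilon)}$.

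For \eqref{3.14} I would set $c_0:=(\boldsymbol l\cdot\boldsymbol v)/(\mathbf 1\cdot\boldsymbol v)$ and $\boldsymbol l^{*}:=\boldsymbol l-c_0\mathbf 1$, so that $\boldsymbol l^{*}\cdot\boldsymbol v=0$ and $\frac{\boldsymbol l\cdot X_n}{\mathbf 1\cdot X_n}-c_0=\frac{\boldsymbol l^{*}\cdot X_n}{\mathbf 1\cdot X_n}$, fix a large $m=m(\varepsilon)$, condition on $\mathcal F_{n-m}$, and decompose $X_n=\sum_x\widetilde Z^{x}_m+R$ over the $N:=\mathbf 1\cdot X_{n-m}$ particles present at time $n-m$ (each $\widetilde Z^{x}_m$ an independent copy of $Z_m$ started from its type $t(x)$), with $R$ collecting the time-$n$ descendants of the batches $I_{n-m+1},\dots,I_n$. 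Then $\boldsymbol l^{*}\cdot X_n=T_1+T_2+T_3$, where $T_1=\sum_x(\boldsymbol l^{*}\cdot\widetilde Z^{x}_m-(M^m(\boldsymbol l^{*})^{T})_{t(x)})$, $T_2=\sum_x(M^m(\boldsymbol l^{*})^{T})_{t(x)}$ and $T_3=\boldsymbol l^{*}\cdot R$. Since $M^m(\boldsymbol l^{*})^{T}=\rho^m(\boldsymbol v\cdot\boldsymbol l^{*})\boldsymbol u^{T}+O(\rho_2^{m})=O(\rho_2^{m})$ (with $\rho_2<\rho$ the modulus of the subdominant eigenvalue of $M$) and $(M^m\mathbf 1^{T})_j\ge c_1\rho^{m}$ for $m$ large, one has $|T_2|\le CN\rho_2^{m}$ and $\Sigma:=\sum_x(M^m\mathbf 1^{T})_{t(x)}\ge c_1N\rho^{m}$. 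On the event $G:=\{\mathbf 1\cdot X_n\ge\frac12\Sigma\}\cap\{|T_1|\le\delta\Sigma\}\cap\{|T_3|\le\delta k_i^{\,n}\}$ one gets $\big|\frac{\boldsymbol l\cdot X_n}{\mathbf 1\cdot X_n}-c_0\big|\le\frac{|T_1|+|T_2|+|T_3|}{\mathbf 1\cdot X_n}\le C'\delta+C''(\rho_2/\rho)^{m}$, which is $\le\varepsilon$ once $m$ is chosen large (to kill the drift) and then $\delta$ is small; here $\mathbf 1\cdot X_n\ge k_i^{\,n}$ is used for the $T_3$ term. By (b) --- the lower tail for the first set, the centred bound for $T_1$, and crudely for $T_3$ since $R$ is a sum of only $m$ exponentially integrable variables while $\mathbf 1\cdot X_n\ge k_i^{\,n}$ --- the conditional probability of $G^{c}$ is at most $Ce^{-c(\varepsilon)N}+Ce^{-c(\varepsilon)k_i^{\,n}}$. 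As $N\ge k_i^{\,n-m}$ and $k_i^{\,n}\ge k_i^{\,n-m}$, taking expectations gives $P_i(\cdot)\le Ce^{-c(\varepsilon)k_i^{\,n-m}}=C_2(\varepsilon)(\mu_2(\varepsilon))^{k_i^{\,n}}$ with $\mu_2(\varepsilon)=e^{-c(\varepsilon)k_i^{-m}}\in(0,1)$, the finitely many $n<m$ being absorbed into $C_2$.

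The hard part is \eqref{3.14}: the quantity $\boldsymbol l^{*}\cdot X_n=\boldsymbol l\cdot X_n-c_0(\mathbf 1\cdot X_n)$ is typically of order $\rho^{\,n}$ and is \emph{not} small in absolute terms, only small relative to $\mathbf 1\cdot X_n$, whose sole sure lower bound is $k_i^{\,n}$ --- possibly far below the typical value $\rho^{\,n}$. Hence one cannot simply divide by $k_i^{\,n}$; one must first produce, on a conditionally overwhelming event, the correct scale $\mathbf 1\cdot X_n\ge cN\rho^{m}$, then use $\boldsymbol l^{*}\cdot\boldsymbol v=0$ together with $\rho_2<\rho$ to discard the residual drift $T_2=O(N\rho_2^{m})$, leaving the fluctuation $T_1$ --- a genuine sum of $N\ge k_i^{\,n-m}$ centred exponentially integrable terms --- to supply the supergeometric factor. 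Keeping $m$ fixed as $n\to\infty$, so that $(\rho_2/\rho)^{m}$ is a fixed small constant and $e^{-ck_i^{\,n-m}}$ has the required order $(\mu_2)^{k_i^{\,n}}$, is the delicate bookkeeping; the immigration contributions $R$ are only a minor nuisance, handled bluntly via $\mathbf 1\cdot X_n\ge k_i^{\,n}$.
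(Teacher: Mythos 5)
Your proof is correct, and it follows the same conceptual route as the paper --- condition at a fixed lag, derive a conditional deviation bound that is geometric in the conditioned population size $\mathbf 1\cdot\boldsymbol j$, then upgrade to a supergeometric rate using the minimal-population hypotheses \eqref{2.7}--\eqref{2.8} --- but it differs from the paper in two noteworthy places.

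First, the paper finishes by invoking its Proposition~\ref{pro2.2}, which proves $\frac{\log g_n^{(i)}(\boldsymbol s)}{k_i^n}\to G_i(\boldsymbol s)<0$, and then combines this with the Theorem~\ref{th3.1} bounds
$P_1(\cdot)\le C_1(\varepsilon)\,g_n^{(1)}(s_\varepsilon\mathbf 1)$ and $P_1(\cdot)\le C_2(\varepsilon)\,g_{n-k_0}^{(1)}(s_\varepsilon^*\mathbf 1)$. You instead use the almost-sure lower bound $\mathbf 1\cdot X_n\ge Z_n^{(i)}\ge k_i^{\,n}$ under $P_i$ directly, which immediately yields $E_i\bigl[s^{\mathbf 1\cdot X_n}\bigr]\le s^{k_i^{\,n}}$ and hence the supergeometric rate without the generating-function limit theorem. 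This is a legitimate shortcut: the inequality $g_n^{(i)}(s\mathbf 1)\le s^{k_i^{\,n}}$ is exactly what your deterministic bound gives, and it is all that Theorem~\ref{th3.3} needs. The paper's detour through Proposition~\ref{pro2.2} is heavier but delivers more: the sharp logarithmic rate $G_i(\boldsymbol s)$, not just an upper bound.

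Second, for the conditional bound behind \eqref{3.14}, the paper reuses its estimate \eqref{3.7} $\hat\phi(\boldsymbol j,k_0,\varepsilon)=O(\lambda^{\mathbf 1\cdot\boldsymbol j})$, obtained by applying an exponential Markov inequality to the shifted vector $\tilde{\boldsymbol l}=\boldsymbol l-(\tfrac{\boldsymbol l\cdot\boldsymbol v}{\mathbf 1\cdot\boldsymbol v}+\varepsilon)\mathbf 1$ after approximating $\boldsymbol jM^{k_0}$ by its Perron part via Proposition~\ref{pro2.4} (which rests only on the Frobenius convergence $\boldsymbol x M^n\rho^{-n}\to\boldsymbol v$). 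You derive the analogous bound by decomposing $\boldsymbol l^*\cdot X_n=T_1+T_2+T_3$ with $\boldsymbol l^*=\boldsymbol l-\tfrac{\boldsymbol l\cdot\boldsymbol v}{\mathbf 1\cdot\boldsymbol v}\mathbf 1$ (so $\boldsymbol l^*\cdot\boldsymbol v=0$) and explicitly invoking the spectral gap $\rho_2<\rho$ to make the drift $T_2=O(N\rho_2^m)$ negligible against the scale $\Sigma\ge c_1N\rho^m$ established on an overwhelming conditional event. Both routes produce a bound geometric in $N=\mathbf 1\cdot X_{n-m}$; yours is more explicit about the two-timescale structure (and handles the immigration contribution $T_3$ more crudely but adequately via $\mathbf 1\cdot X_n\ge k_i^{\,n}$), while the paper's is more compact because it recycles Theorem~\ref{th3.1} and avoids introducing $\rho_2$ at all. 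Net effect: your proof is more self-contained and elementary, trading the reuse of Propositions~\ref{pro2.4} and~\ref{pro2.2} for a direct Chernoff-plus-spectral-gap argument that delivers the same supergeometric upper bounds.
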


\begin{remark}\label{re:1.3}
(\ref{2.7}) and (\ref{2.8}) mean that every $i$ type particle produces at least $k_i$ particles of it's kind and there are at least $d_i$ particles of type $i$ in each immigration. Athreya and Vidyashankar \cite{AV95} showed the case $k_i=2$ in the non-immigration case.
\end{remark}
\par
Recalling that
$Y_n=\rho^{-n}\big(\boldsymbol{u}\cdot X_n-\frac{\rho^{n+1}-1}{\rho-1}(\boldsymbol{u}\cdot \boldsymbol{\lambda})\big)$. We will show that $\lim\limits_{n\to\infty}Y_n=Y$ w.p.1 in Proposition~\ref{pro2.3}. The next theorem analyzes the properties of $Y_n$ and is needed in the proof of Theorem~\ref{th3.5}. A proof of the discrete time case can be found, e.g., in \cite{Ath94,AV95,LL19}, and for the continuous time case in \cite{LCCL19,LCL19}.

\begin{theorem}\label{th3.4}
 Assume that $ E_i\big(e^{\theta_0(\emph{\textbf{1}}\cdot X_1)}\big)<\infty$
for some $\theta_0>0$ and $i=1,2$, then there is $\theta_0^{\star}>0$ such that
\begin{equation}\label{3.15}
C_3=sup_n E_i\big(e^{\theta_0^{\star}Y_n}\big)<\infty.
\end{equation}
\end{theorem}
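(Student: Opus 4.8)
\noindent The plan is to route everything through the population generating function \eqref{1.2} and to reduce the statement to a uniform exponential moment bound for the pure branching martingale $\widetilde{W}_n:=\rho^{-n}(\boldsymbol{u}\cdot Z_n)$, which is classical in the non-immigration case (cf. \cite{Ath94,AV95}). First I would note that $\rho^{-n}\frac{\rho^{n+1}-1}{\rho-1}(\boldsymbol{u}\cdot\boldsymbol{\lambda})=\frac{\rho-\rho^{-n}}{\rho-1}(\boldsymbol{u}\cdot\boldsymbol{\lambda})$ lies in $[0,\frac{\rho}{\rho-1}(\boldsymbol{u}\cdot\boldsymbol{\lambda})]$, so that for every $\theta>0$ one has $e^{\theta Y_n}\le e^{\theta W_n}$ with $W_n:=\rho^{-n}(\boldsymbol{u}\cdot X_n)$; hence it suffices to exhibit $\theta>0$ with $\sup_n E_i(e^{\theta W_n})<\infty$. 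Evaluating \eqref{1.2} at $\boldsymbol{s}=\boldsymbol{s}_n(\theta):=(e^{\theta\rho^{-n}u_1},e^{\theta\rho^{-n}u_2})$ and using $\theta\rho^{-n}(\boldsymbol{u}\cdot Z_k)=\theta\rho^{-(n-k)}\widetilde{W}_k$, one gets the identity
$$E_i(e^{\theta W_n})=\psi_n^{(i)}(\theta)\prod_{k=0}^{n-1}h\big(\psi_k^{(1)}(\theta\rho^{-(n-k)}),\,\psi_k^{(2)}(\theta\rho^{-(n-k)})\big),$$
where $\psi_m^{(r)}(\eta):=E_r(e^{\eta\widetilde{W}_m})$; all terms are finite for $\theta$ small by the lemma below, and I also use that $E_i(e^{\theta_0(\boldsymbol{1}\cdot X_1)})<\infty$ forces $E(e^{\theta_0(\boldsymbol{1}\cdot I_1)})<\infty$ (since $X_1=Z_1+I_1$ with $Z_1,I_1$ independent).

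The core is the lemma that there are $\theta^{\star}>0$ and finite $S^{\star}$ with $S_m(\theta^{\star}):=\sum_i v_i(\psi_m^{(i)}(\theta^{\star})-1)\le S^{\star}$ for all $m\ge0$. Conditioning on $Z_1$ gives the recursion $\psi_{m+1}^{(i)}(\theta)=f^{(i)}\big(\psi_m^{(1)}(\theta/\rho),\psi_m^{(2)}(\theta/\rho)\big)$. I would Taylor-expand $f^{(i)}$ at $\boldsymbol{1}$ — the exponential moment hypothesis bounding its second derivatives on a neighbourhood $\{0\le x_j\le\delta\}$, so that $f^{(i)}(1+x_1,1+x_2)\le 1+m_{i1}x_1+m_{i2}x_2+B(x_1^2+x_2^2)$ there — and weight by the left eigenvector. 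Since $\sum_i v_i m_{ij}=\rho v_j$, the scalar $S_m(\theta)\ge0$ satisfies $S_{m+1}(\theta)\le\rho S_m(\theta/\rho)+\beta S_m(\theta/\rho)^2$ with $S_0(\theta)\le C_0\theta$. An induction with the ansatz $S_m(\theta)\le a\theta+b\theta^2$ — choosing $a$ large, $b\ge 2\beta a^2/(\rho(\rho-1))$ and $\theta^{\star}$ small enough to keep the arguments inside the Taylor neighbourhood — closes precisely because $\rho>1$ makes the quadratic coefficient $b/\rho$ strictly smaller than $b$, establishing $\sup_m S_m(\theta^{\star})=:S^{\star}<\infty$ (and en route the finiteness invoked above). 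Since $v_r>0$, this yields $\sup_m\psi_m^{(r)}(\theta^{\star})\le 1+S^{\star}/v_r<\infty$, and then $e^x-1\le xe^x$ upgrades it to a uniform linear bound $\psi_m^{(r)}(\eta)\le 1+C\eta$ for all $m,r$ and $0\le\eta\le\eta_0$.

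Inserting the linear bound and the monotonicity of $h$ into the product formula, for $\theta$ small each factor is at most $h(1+C\theta\rho^{-(n-k)},1+C\theta\rho^{-(n-k)})=E\big((1+C\theta\rho^{-(n-k)})^{\boldsymbol{1}\cdot I_1}\big)\le E\big(e^{C\theta\rho^{-(n-k)}(\boldsymbol{1}\cdot I_1)}\big)\le 1+C'C\theta\rho^{-(n-k)}$, once more by $e^x-1\le xe^x$ together with $E(e^{\theta_0(\boldsymbol{1}\cdot I_1)})<\infty$. Multiplying over $k=0,\dots,n-1$, the product is at most $\exp\big(C'C\theta\sum_{m\ge1}\rho^{-m}\big)=\exp\big(C'C\theta/(\rho-1)\big)<\infty$ uniformly in $n$, while $\psi_n^{(i)}(\theta)\le 1+S^{\star}/v_i$ by the lemma; hence $\sup_n E_i(e^{\theta W_n})<\infty$, and taking $\theta_0^{\star}$ equal to this $\theta$ (small enough to serve $i=1,2$ at once) finishes the proof.

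The only real obstacle I expect is the lemma: one must control the coupled two-component recursion for the $\psi_m^{(\cdot)}$, which the left-eigenvector weighting collapses to a scalar recursion, after which the quadratic-ansatz induction goes through because supercriticality $\rho>1$ is exactly what renders the error terms summable. The reduction from $Y_n$ to $W_n$, the product identity drawn from \eqref{1.2}, and the handling of the immigration factors are then routine bookkeeping.
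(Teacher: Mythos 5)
Your proposal is correct and takes a genuinely different route from the paper's. The paper proves \eqref{3.15} indirectly: setting $B_n=\rho^{-n}\boldsymbol{u}\cdot X_n$, it shows $\{B_n\}$ is a submartingale, chooses $\theta_n\downarrow\theta_0^{\star}$ so that $E_i(e^{\theta_n B_n})$ is constant in $n$, and then rules out $\theta_0^{\star}=0$ by iterating the one-step quadratic MGF bound of Lemma~\ref{le3.2} together with $\sum_k\rho^{-k}<\infty$. You instead make the whole thing constructive: you reduce $Y_n$ to $W_n=B_n$ by the pointwise inequality $Y_n\le W_n$ (used implicitly in the paper too), evaluate the generating-function identity \eqref{1.2} at $\boldsymbol{s}_n(\theta)=(e^{\theta\rho^{-n}u_1},e^{\theta\rho^{-n}u_2})$ to get the exact product decomposition $E_i(e^{\theta W_n})=\psi_n^{(i)}(\theta)\prod_{k=0}^{n-1}h(\psi_k^{(\cdot)}(\theta\rho^{-(n-k)}))$, establish a uniform exponential-moment lemma for the pure branching martingale $\widetilde{W}_m$ by projecting the two-component recursion onto $\boldsymbol{v}$ and running a quadratic-ansatz induction (the multitype analogue of Athreya's single-type argument), and then bound each immigration factor by $1+C'C\theta\rho^{-(n-k)}$, whose product is controlled by $\sum_m\rho^{-m}<\infty$. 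Both arguments exploit the same supercritical geometric summability, and both ultimately rest on a Taylor-type bound for the offspring MGF near $\boldsymbol{1}$ (the paper's Lemma~\ref{le3.2} is exactly your Taylor estimate applied once); the paper trades an explicit bound for a clean contradiction argument, while yours yields an explicit $\theta_0^{\star}$ and a transparent separation of the branching and immigration contributions. The reduction, the product identity, the scalar recursion $S_{m+1}(\theta)\le\rho S_m(\theta/\rho)+\beta S_m(\theta/\rho)^2$ with the induction closing because $\rho>1$, and the use of $e^x-1\le xe^x$ to linearize the factors are all sound.
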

\par
Theorem~\ref{th3.5} below shows that the decay rate of $P(\left|Y_n-Y\right|>\varepsilon)$ is supergeometric, as we have seen in the non-immigration case (see~\cite{AV95,LCCL19}) and the single-type case
(see~\cite{Ath94,LL19}).
\par
\begin{theorem} \label{th3.5}
 Assume that $ E_i\big(e^{\theta_0(\emph{\textbf{1}}\cdot X_1)}\big)<\infty$
for some $\theta_0>0$ and $i=1,2$,
then there are constants $0<C_4<\infty$ and $0<\kappa_1<\infty$ such that for $\varepsilon>0,i=1,2,$
\begin{equation}\label{3.18}
P_i(\left|Y_n-Y\right|>\varepsilon)\leq C_4e^{-\kappa_1\varepsilon^\frac{2}{3}\big(\rho^\frac{1}{3}\big)^n}.
\end{equation}
\end{theorem}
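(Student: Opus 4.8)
The plan is to combine the martingale structure of $(Y_n)$ with a telescoping decomposition of $Y-Y_n$, estimating each increment by a Bernstein-type exponential inequality and controlling the population size through Theorem~\ref{th3.4}. First I would record that, since $Y_n$ is an $\mathcal F_n$-martingale ($\mathcal F_n=\sigma(X_0,\dots,X_n)$) converging $P_i$-a.s.\ to $Y$ by Proposition~\ref{pro2.3},
\[
Y_{n+1}-Y_n=\rho^{-(n+1)}\big(\boldsymbol{u}\cdot X_{n+1}-\rho\,\boldsymbol{u}\cdot X_n-\boldsymbol{u}\cdot\boldsymbol{\lambda}\big),
\]
and, decomposing $X_{n+1}$ into the independent offspring of the $\boldsymbol{1}\cdot X_n$ particles present at time $n$ together with the independent immigration batch $I_{n+1}$ and using $\boldsymbol{u}\cdot(e_rM)=\rho u_r$, that $\rho^{n+1}(Y_{n+1}-Y_n)$ is, conditionally on $\mathcal F_n$, a sum of $\boldsymbol{1}\cdot X_n+1$ independent mean-zero random variables each having a finite exponential moment of some order $\theta_1>0$ (from $E_ie^{\theta_0(\boldsymbol{1}\cdot X_1)}<\infty$ and $\boldsymbol{u}\cdot\xi\le(\max_ru_r)\,\boldsymbol{1}\cdot\xi$). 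Hence there is $C_\ast<\infty$ with $E_i\big(e^{\theta\rho^{n+1}(Y_{n+1}-Y_n)}\mid\mathcal F_n\big)\le\exp\big(C_\ast\theta^2(\boldsymbol{1}\cdot X_n+1)\big)$ for $|\theta|\le\theta_1$. I would also note the deterministic bound $\boldsymbol{1}\cdot X_k\le(\min_ru_r)^{-1}\rho^k\big(Y_k+\tfrac{\rho}{\rho-1}\boldsymbol{u}\cdot\boldsymbol{\lambda}\big)$, and, since $e^{\theta_0^\star Y_k}$ is a nonnegative submartingale with $\sup_kE_ie^{\theta_0^\star Y_k}=C_3<\infty$ by Theorem~\ref{th3.4}, Doob's maximal inequality gives $P_i\big(\sup_{k\ge n}Y_k>N\big)\le C_3e^{-\theta_0^\star N}$ for all $N>0$.

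Next I would write $Y-Y_n=\sum_{k\ge n}(Y_{k+1}-Y_k)$ and, for weights $\varepsilon_k=(1-\rho^{-\eta})\varepsilon\rho^{-\eta(k-n)}$ with a fixed $\eta>\tfrac12$ (so $\sum_{k\ge n}\varepsilon_k=\varepsilon$), use
\[
P_i(|Y_n-Y|>\varepsilon)\le P_i\big(\sup_{k\ge n}Y_k>N\big)+\sum_{k\ge n}E_i\Big[\mathbf{1}_{\{Y_k\le N\}}\,P_i\big(|Y_{k+1}-Y_k|>\varepsilon_k\mid\mathcal F_k\big)\Big].
\]
The overflow event is treated once, globally, because $P_i(\boldsymbol{1}\cdot X_k>a\rho^k)$ does not decay in $k$, so it cannot be handled term by term. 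On $\{Y_k\le N\}$ one has $\boldsymbol{1}\cdot X_k\le c_0\rho^k(N+c_1)$, whence Markov's inequality applied to $e^{\pm\theta\rho^{k+1}(Y_{k+1}-Y_k)}$ together with the conditional bound above, optimized over $\theta\in(0,\theta_1]$, yields on that event a Bernstein-type estimate of the form $2\exp\!\big(-c_2\min\{\rho^{k}\varepsilon_k^2/(N+c_1),\,\rho^{k}\varepsilon_k\}\big)$, the first branch being operative in the relevant range; summing this geometric-in-$k$ series (dominated by its $k=n$ term since $\eta>\tfrac12$) gives $\le C'\exp\big(-c_3\varepsilon^2\rho^n/(N+c_1)\big)$.

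Finally I would optimize the free threshold $N$ in
\[
P_i(|Y_n-Y|>\varepsilon)\le C_3e^{-\theta_0^\star N}+C'\exp\big(-c_3\varepsilon^2\rho^n/(N+c_1)\big);
\]
balancing the two exponents produces a stretched-exponential bound, and an admissible (possibly coarser) choice of $N$ places it in the stated form $C_4\exp\big(-\kappa_1\varepsilon^{2/3}(\rho^{1/3})^n\big)$, after verifying that the optimal $\theta$ indeed satisfies $\theta\le\theta_1$ in this regime and dealing with the complementary regime (large $\varepsilon$, or finitely many small $n$) by the trivial bound $P_i\le1$ with $C_4$ enlarged, or via the second branch of the Bernstein minimum. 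The step I expect to be the main obstacle is the second--third one: obtaining the Bernstein estimate with constants uniform in $i$, $k$ and the $\mathcal F_k$-conditioning, and then coordinating the choices of $(\varepsilon_k)$, $N$ and $\theta$ so that both families of terms are summable in $k$ and the resulting exponent is uniform in $\varepsilon>0$ and $n$; the structural point that $\{\boldsymbol{1}\cdot X_k$ large$\}$ must be dominated by a single maximal inequality powered by Theorem~\ref{th3.4}, rather than controlled separately for each $k$, is what makes the telescoping argument close.
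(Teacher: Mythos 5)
Your approach---telescoping $Y-Y_n=\sum_{k\ge n}(Y_{k+1}-Y_k)$, applying a conditional Bernstein bound to each martingale increment, and controlling the population across all $k\ge n$ through a single Doob maximal inequality powered by Theorem~\ref{th3.4}---is genuinely different from the paper's. The paper instead writes $Y-Y_n$ conditionally on $X_n$ as a one-shot sum
$\rho^{-n}\bigl(\sum_{r}\sum_{m\le X_n^{(r)}}(W_m^{(r)}-u_r)+V+\boldsymbol{u}\cdot\boldsymbol{\lambda}\bigr)$
of i.i.d.\ limit variables along lines of descent, applies Markov's inequality with the $\sqrt{\boldsymbol{u}\cdot X_n}$-scaled parameter to obtain the uniform bound $\psi(\boldsymbol{k},\varepsilon)\le C_{21}e^{-\theta^{\circ}\varepsilon/\sqrt{\boldsymbol{u}\cdot\boldsymbol{k}}}$, and finishes via the elementary estimate $\int_0^\infty e^{-t}e^{-\mu^2/t^2}\,dt\le 2e^{-\mu^{2/3}}$ together with Theorem~\ref{th3.4}. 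Your route avoids the explicit almost-sure limit representation, and, once repaired, its optimal $N\asymp\varepsilon\rho^{n/2}$ would actually yield the sharper exponent $c\,\varepsilon\rho^{n/2}$; the stated $\varepsilon^{2/3}\rho^{n/3}$ then drops out as a deliberate weakening, as you note.

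However, there is a concrete error in the choice of weights that breaks the summation step. With $\varepsilon_k=(1-\rho^{-\eta})\varepsilon\,\rho^{-\eta(k-n)}$, the exponent in the first Bernstein branch is
$\rho^{k}\varepsilon_k^2/(N+c_1)\propto \varepsilon^2\rho^{n}\,\rho^{(k-n)(1-2\eta)}/(N+c_1)$.
For the series $\sum_{k\ge n}\exp\!\bigl(-c_2\rho^{k}\varepsilon_k^2/(N+c_1)\bigr)$ to converge and be dominated by its $k=n$ term you need $\rho^{k}\varepsilon_k^2$ to \emph{grow} in $k$, i.e.\ $1-2\eta>0$, hence $\eta<\tfrac12$. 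With the stated choice $\eta>\tfrac12$ the exponent tends to $0$ as $k\to\infty$, the individual terms tend to $1$, and the series diverges; the argument does not close. The only constraint coming from the second branch (and from $\sum_k\varepsilon_k=\varepsilon$) is $0<\eta<1$, so any $\eta\in(0,\tfrac12)$ works, and with that correction the rest of the plan goes through.
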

\par
The following result shows that conditioned on $Y\geq \alpha$, $\alpha>0,$ the large deviation probabilities
in Theorem \ref{th3.1} decay super-geometrically. Similar results hold in the non-immigration case (see~\cite{AV95}) and the single-type
case (see~\cite{Ath94,LL19}).
\par
\begin{theorem} \label{th3.6}
  Assume that $ E_i(e^{\theta_0(\emph{\textbf{1}}\cdot X_1)})<\infty$
for some $\theta_0>0$ and $i=1,2$.
 Then, there are constants $0<C_5,C_6,C_7,C_8<\infty$ and $0<\kappa_2,\kappa_3<\infty$ such that for every
 $\varepsilon>0,$ $\alpha>0$ and $\boldsymbol{l}\neq\emph{\textbf{0}},$ we can find $0<\Lambda(\varepsilon)<\infty$ such that
\begin{equation}\label{3.21}
P_i\Big(\Big|\frac{\boldsymbol{l}\cdot X_{n+1}}{\emph{\textbf{1}}\cdot X_n}-\frac{\boldsymbol{l}\cdot (X_nM)}{\emph{\textbf{1}}\cdot X_n}\Big|
>\varepsilon\Big|Y\geq \alpha\Big)\leq C_5e^{-\alpha\beta \Lambda(\varepsilon)\rho^n}
+C_6e^{-\kappa_2\big(\alpha(1-\beta)\big)^\frac{2}{3}\big(\rho^\frac{1}{3}\big)^n}
\end{equation}
and
\begin{equation}\label{3.22}
P_i\Big(\Big|\frac{\boldsymbol{l}\cdot X_n}{\emph{\textbf{1}}\cdot X_n}-\frac{\boldsymbol{l}\cdot \boldsymbol{v} }{\emph{\textbf{1}}\cdot \boldsymbol{v} }\Big|
>\varepsilon\Big|Y\geq \alpha\Big)\leq C_7e^{-\alpha\beta \Lambda(\varepsilon)\rho^n}
+C_8e^{-\kappa_3\big(\alpha(1-\beta)\big)^\frac{2}{3}\big(\rho^\frac{1}{3}\big)^n}
\end{equation}
 for every $0<\beta<1$.
 \end{theorem}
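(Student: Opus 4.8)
The plan is to condition on the event $\{Y\geq\alpha\}$ and split the random limit $Y$ into a ``late-generation'' contribution that is already visible at generation $n$ and a ``tail'' contribution that still has to be produced. Concretely, write $Y = Y_n + (Y - Y_n)$, and note that on $\{Y\geq\alpha\}$ at least one of $\{Y_n\geq\alpha\beta\}$ or $\{Y-Y_n > \alpha(1-\beta)\}$ must occur, for any $0<\beta<1$. Therefore, for either of the large deviation events $A_n$ in \eqref{3.21}, \eqref{3.22},
$$
P_i(A_n\mid Y\geq\alpha)\;\le\;\frac{P_i\big(A_n,\,Y_n\ge\alpha\beta\big)}{P_i(Y\ge\alpha)}\;+\;\frac{P_i\big(Y-Y_n>\alpha(1-\beta)\big)}{P_i(Y\ge\alpha)}.
$$
The second term is controlled immediately by Theorem~\ref{th3.5}: $P_i(Y-Y_n>\alpha(1-\beta))\le P_i(|Y_n-Y|>\alpha(1-\beta))\le C_4 e^{-\kappa_1(\alpha(1-\beta))^{2/3}(\rho^{1/3})^n}$, and $P_i(Y\ge\alpha)>0$ is a fixed positive constant (since $Y$ is nondegenerate on nonextinction, and under $P_i$ the process survives with positive probability — absorbing the reciprocal into $C_6,C_8$). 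This already produces the supergeometric summand $C_6 e^{-\kappa_2(\alpha(1-\beta))^{2/3}(\rho^{1/3})^n}$ in each bound.

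For the first term, the key observation is that on the event $\{Y_n\ge\alpha\beta\}$ the total population at time $n$ must be large: since $Y_n=\rho^{-n}(\boldsymbol u\cdot X_n - \tfrac{\rho^{n+1}-1}{\rho-1}\boldsymbol u\cdot\boldsymbol\lambda)$ and $\boldsymbol u$ has all positive entries, $Y_n\ge\alpha\beta$ forces $\boldsymbol u\cdot X_n\ge\alpha\beta\rho^n$ and hence $\boldsymbol 1\cdot X_n\ge c\,\alpha\beta\rho^n$ for a constant $c>0$ depending only on $\min_r u_r$. So it suffices to bound $P_i(A_n,\ \boldsymbol 1\cdot X_n\ge c\alpha\beta\rho^n)$. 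Now I condition on $\mathcal F_n$. Given $X_n=\boldsymbol j$ with $\boldsymbol 1\cdot\boldsymbol j = N$ large, the quantity $\tfrac{\boldsymbol l\cdot X_{n+1}}{\boldsymbol 1\cdot X_n}-\tfrac{\boldsymbol l\cdot(X_nM)}{\boldsymbol 1\cdot X_n}$ is, after adding and subtracting the immigration mean, a normalized sum of $N$ independent centered increments (the offspring of the $N$ particles) plus a contribution from the immigration $I_{n+1}$. Under the exponential moment hypothesis $E_i(e^{\theta_0(\boldsymbol 1\cdot X_1)})<\infty$, each increment has a finite moment generating function in a neighborhood of $0$, so a standard Chernoff/Bernstein bound gives
$$
P_i\Big(\Big|\tfrac{\boldsymbol l\cdot X_{n+1}}{\boldsymbol 1\cdot X_n}-\tfrac{\boldsymbol l\cdot(X_nM)}{\boldsymbol 1\cdot X_n}\Big|>\varepsilon\ \Big|\ X_n=\boldsymbol j\Big)\le 2e^{-\Lambda(\varepsilon)N}
$$
for a rate $\Lambda(\varepsilon)>0$ depending only on $\varepsilon$ and the increment law; taking expectations over $X_n$ on $\{\boldsymbol 1\cdot X_n\ge c\alpha\beta\rho^n\}$ yields the bound $C_5 e^{-\alpha\beta\Lambda(\varepsilon)\rho^n}$ (after renaming constants to absorb $c$). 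For \eqref{3.22} the argument is the analogue with a fixed lag $k_0$: on $\{\boldsymbol 1\cdot X_n\ge c\alpha\beta\rho^n\}$ the ratio $\tfrac{\boldsymbol l\cdot X_n}{\boldsymbol 1\cdot X_n}$ concentrates around $\tfrac{\boldsymbol l\cdot\boldsymbol v}{\boldsymbol 1\cdot\boldsymbol v}$ with an exponential-in-$N$ rate — this is essentially the estimate underlying \eqref{3.2} in Theorem~\ref{th3.1}, applied conditionally — giving the $C_7 e^{-\alpha\beta\Lambda(\varepsilon)\rho^n}$ term.

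The main obstacle is the conditional Chernoff bound for $\tfrac{\boldsymbol l\cdot X_n}{\boldsymbol 1\cdot X_n}$ in the second inequality: unlike the one-step increment in \eqref{3.21}, this is a ratio of two correlated sums built up over $n$ generations from a population of size $\ge c\alpha\beta\rho^n$, so ``rate proportional to the population size'' is not immediate. The way around it is to not track the whole history but to use the branching property once more: write $X_n$ as the superposition of independent subtrees rooted at the (many) particles alive at some earlier generation $n-k_0$, plus immigration after time $n-k_0$; since on our event there are order $\rho^n$ such roots, the normalized type-proportion is an average of order $\rho^n$ i.i.d.\ bounded-MGF blocks, each centered near $\tfrac{\boldsymbol l\cdot\boldsymbol v}{\boldsymbol 1\cdot\boldsymbol v}$ up to an $O(\rho^{-k_0})$ bias that is made $<\varepsilon/2$ by choosing $k_0$ large. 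A Bernstein inequality for this average then delivers the uniform exponential rate $\Lambda(\varepsilon)$. Finally, I combine the two summands, note that the constants $C_5,\dots,C_8,\kappa_2,\kappa_3$ can be chosen independently of $\varepsilon,\alpha,\beta$ (with only $\Lambda$ depending on $\varepsilon$), and the theorem follows; the bound holds for every $0<\beta<1$ because $\beta$ was arbitrary throughout.
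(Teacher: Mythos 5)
Your decomposition and strategy for \eqref{3.21} match the paper's proof essentially line by line: split on $\{Y_n\geq\alpha\beta\}$ versus $\{Y_n<\alpha\beta\}$, handle the first via a Chernoff bound conditional on $X_n=\boldsymbol{j}$ (random-walk increments plus a Markov bound for the immigration term), and handle the second via Theorem~\ref{th3.5} since $\{Y\geq\alpha,\,Y_n<\alpha\beta\}\subset\{Y-Y_n>\alpha(1-\beta)\}$, absorbing $1/P(Y\geq\alpha)$ into the constants. That part is correct and is the paper's argument.

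There is, however, a gap in your treatment of \eqref{3.22}. You run the same split on $\{Y_n\geq\alpha\beta\}$, then try to estimate $P_i\big(\big|\tfrac{\boldsymbol{l}\cdot X_n}{\mathbf{1}\cdot X_n}-\tfrac{\boldsymbol{l}\cdot\boldsymbol{v}}{\mathbf{1}\cdot\boldsymbol{v}}\big|>\varepsilon,\ \mathbf{1}\cdot X_n\geq c\alpha\beta\rho^n\big)$ by conditioning on $X_{n-k_0}$ and treating $X_n$ as a superposition of ``order $\rho^n$'' independent subtrees rooted at generation $n-k_0$. But the event you are on, $\{Y_n\geq\alpha\beta\}$, controls $\mathbf{1}\cdot X_n$, not $\mathbf{1}\cdot X_{n-k_0}$; it gives you no lower bound on the number of roots at generation $n-k_0$, which is exactly the quantity your Chernoff rate has to be proportional to. Note also that you cannot instead condition on $X_n$ itself, because conditional on $X_n=\boldsymbol{j}$ the event $\big\{\big|\tfrac{\boldsymbol{l}\cdot\boldsymbol{j}}{\mathbf{1}\cdot\boldsymbol{j}}-\tfrac{\boldsymbol{l}\cdot\boldsymbol{v}}{\mathbf{1}\cdot\boldsymbol{v}}\big|>\varepsilon\big\}$ is deterministic and largeness of $\boldsymbol{j}$ alone does not make it false. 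The fix — and what the paper does — is to perform the split at the lagged time: decompose on $\{Y_{n-k_0}\geq\alpha\beta\}$ and $\{Y_{n-k_0}<\alpha\beta\}$. Then on the first piece, conditioning on $X_{n-k_0}=\boldsymbol{j}$, the constraint $\boldsymbol{u}\cdot\boldsymbol{j}\gtrsim\alpha\beta\rho^{n-k_0}$ is a deterministic restriction on $\boldsymbol{j}$, the $k_0$-step Chernoff bound from \eqref{3.10a} applies with rate proportional to $\mathbf{1}\cdot\boldsymbol{j}$, and the second piece is controlled by Theorem~\ref{th3.5} applied to $Y-Y_{n-k_0}$ (the fixed $k_0$ shift of the exponent is absorbed into $\kappa_3$). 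With that one change your argument goes through and coincides with the paper's.
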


\par
\vspace{5mm}
 \setcounter{section}{3}
 \setcounter{equation}{0}
 \setcounter{theorem}{0}
 \setcounter{lemma}{0}
 \setcounter{definition}{0}
 \setcounter{corollary}{0}
\noindent {\large \bf 3. Proofs}
 \vspace{3mm}
\par
We now turn to the proofs of the main results. To prove Theorem~\ref{th3.1}, we shall use the following two important results. The first one is a large deviation result for the process under an exponential moment hypothesis and shows that the decay rate of $g_n(\emph{\textbf{s}})$ is geometric.
Define $\Vert \emph{\textbf{a}}\Vert=max\{|a_1|, |a_2|\}$ for vector $\emph{\textbf{a}}\in\mathbb{R}^2$. Let $\mathbb{R}_{+}^2=\{(r_1,r_2):r_1\geq0,r_2\geq0\}$.
\par
\noindent
\begin{proposition}\label{pro2.1}
Assume that $h_0>0$ and there is $0<\gamma<1$ such that $A^n\gamma^{-n}$
converges to a matrix $P_0$ which is nonzero and has finite entries. Then, there exists
$R:[0,1]^2\rightarrow\mathbb{R}_{+}^2$ such that for every $\boldsymbol{s}\in[0,1]^2\setminus\{\boldsymbol{1}\}$,
\begin{equation}\label{2.1}
\frac{g_n(\textbf{s})}{(h_0\gamma)^n}\rightarrow R(\textbf{s}) \ \ \ as \ n\rightarrow\infty,
\end{equation}
and
$R(\textbf{s})$ is the unique solution to the vector functional equation below
\begin{equation}\label{2.2}
h(\textbf{s})R(f(\textbf{s}))=h_0\gamma R(\textbf{s}),\ \boldsymbol{s}\in[0,1]^2\setminus\{\boldsymbol{1}\},
\end{equation}
subject to
\begin{equation}\label{2.3}
 R(\emph{\textbf{0}})=\emph{\textbf{0}},\ \ \ \ 0<\Vert R(\boldsymbol{s})\Vert <\infty \ \
 for \ \boldsymbol{s}\in[0,1]^2\setminus\{\boldsymbol{0},\boldsymbol{1}\}\
\end{equation}
 and
\begin{equation}\label{2.4}
\lim_{\textbf{s}\rightarrow \emph{\textbf{1}}}R(\textbf{s})=\infty, \ \ \
\lim_{\textbf{s}\rightarrow \emph{\textbf{0}}}R'(\textbf{s})=P_0,
\end{equation}
where $R'$ is the Jacobian matrix. Furthermore, for $i=1,2$ and $\boldsymbol{j}\in \mathbb{N}^2\setminus\{\boldsymbol{0}\}$,
\begin{equation}\label{2.4a}
\lim_{n\to\infty}\frac{P_i(X_n=\boldsymbol{j})}{(h_0\gamma)^n}=r_{i,\boldsymbol{j}}\  \text{exists}
\end{equation}
and $R_i(\boldsymbol{s})=\sum\limits_{\boldsymbol{j}\in\mathbb{N}^2\setminus\{\boldsymbol{0}\}}
r_{i,\boldsymbol{j}}\boldsymbol{s}^{\boldsymbol{j}}$ for $\boldsymbol{s}\in[0,1]^2\setminus\{\boldsymbol{1}\}$, where $R(\boldsymbol{s})=(R_1(\boldsymbol{s}), R_2(\boldsymbol{s}))$.
\end{proposition}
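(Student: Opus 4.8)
The plan is to factor the immigration generating function through the no-immigration one. By \eqref{1.2}, $g_n(\boldsymbol{s})=f_n(\boldsymbol{s})\prod_{k=0}^{n-1}h(f_k(\boldsymbol{s}))$, so with $H_n(\boldsymbol{s}):=h_0^{-n}\prod_{k=0}^{n-1}h(f_k(\boldsymbol{s}))$ one has $(h_0\gamma)^{-n}g_n(\boldsymbol{s})=\gamma^{-n}f_n(\boldsymbol{s})\,H_n(\boldsymbol{s})$; I would analyze the two factors separately and then read off \eqref{2.2}--\eqref{2.4a}. First I would record the decay of $f_n$. Since $f(\boldsymbol{0})=\boldsymbol{0}$ the non-extinction event has probability one, and positive regularity of $M$ (with $\rho>1$) gives $Z_n^{(1)},Z_n^{(2)}\to\infty$ a.s.\ (see \cite{AKN72,JAP67}); hence $\boldsymbol{s}^{Z_n}\to 0$ a.s.\ and dominated convergence gives $f_n^{(i)}(\boldsymbol{s})=E_i[\boldsymbol{s}^{Z_n}]\to 0$ for every $\boldsymbol{s}\in[0,1]^2\setminus\{\boldsymbol{1}\}$, which forces $f_n(\boldsymbol{s})\neq\boldsymbol{1}$ for all large $n$, and hence for every $n$ since $\boldsymbol{1}$ is a fixed point of $f$. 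Moreover $f_i$ has vanishing constant term and nonnegative higher coefficients, so for $\boldsymbol{t}\in[0,\delta]^2$ with $0<\delta\le 1$ we get $\boldsymbol{t}^{\boldsymbol{j}}\le(t_1+t_2)\delta^{\,j_1+j_2-1}$ ($\boldsymbol{j}\neq\boldsymbol{0}$) and therefore $f(\boldsymbol{t})\le(A+\delta E)\boldsymbol{t}$ componentwise, $E$ the $2\times2$ all-ones matrix. The hypothesis $A^n\gamma^{-n}\to P_0\neq\boldsymbol{0}$ (finite) forces $A$ to have spectral radius exactly $\gamma$, so by continuity of the spectral radius that of $A+\delta E$ is $<\sqrt{\gamma}$ for $\delta$ small (using $\gamma<1$, whence $\sqrt{\gamma}>\gamma$); since $f_n(\boldsymbol{s})$ eventually enters $[0,\delta]^2$, iterating the bound gives $\Vert f_n(\boldsymbol{s})\Vert\le C\,\gamma'^{\,n}$ for some $\gamma'<\sqrt{\gamma}$, in particular with $\gamma'^{\,2}<\gamma$.

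Next I would bring in the no-immigration limit $\gamma^{-n}f_n(\boldsymbol{s})\to Q(\boldsymbol{s})$ on $[0,1]^2\setminus\{\boldsymbol{1}\}$ --- the multitype analogue of Athreya's theorem, available from \cite{AV95} and otherwise reproved as follows: with $\boldsymbol{w}_n:=\gamma^{-n}f_n(\boldsymbol{s})$, Step 1 gives $\boldsymbol{w}_{n+1}=(\gamma^{-1}A)\boldsymbol{w}_n+\boldsymbol{\epsilon}_n$ with $\Vert\boldsymbol{\epsilon}_n\Vert=\gamma^{-(n+1)}O(\Vert f_n(\boldsymbol{s})\Vert^2)=O\big((\gamma'^{\,2}/\gamma)^n\big)$ summable, and since $(\gamma^{-1}A)^n=A^n\gamma^{-n}\to P_0$ is bounded, the variation-of-constants formula yields convergence of $\boldsymbol{w}_n$ to some $Q(\boldsymbol{s})\in\mathbb{R}_{+}^2$; letting $n\to\infty$ in $Q(\boldsymbol{s})=\gamma^{-n}Q(f_n(\boldsymbol{s}))=P_0\gamma^{-n}f_n(\boldsymbol{s})+\gamma^{-n}o(\Vert f_n(\boldsymbol{s})\Vert)$ gives $Q(f(\boldsymbol{s}))=\gamma Q(\boldsymbol{s})$ and $Q(\boldsymbol{s})=P_0Q(\boldsymbol{s})$, and one also has $Q(\boldsymbol{0})=\boldsymbol{0}$, $Q'(\boldsymbol{0})=\lim_n\gamma^{-n}A^n=P_0$ (the Jacobian of $f_n$ at $\boldsymbol{0}$ being $A^n$), $0<\Vert Q(\boldsymbol{s})\Vert<\infty$ off $\{\boldsymbol{0},\boldsymbol{1}\}$, and $Q(\boldsymbol{s})\to\infty$ as $\boldsymbol{s}\to\boldsymbol{1}$ (as in \cite{AV95}). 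For the immigration factor, $h$ increasing and $f_k(\boldsymbol{s})\ge\boldsymbol{0}$ give $h(f_k(\boldsymbol{s}))\ge h(\boldsymbol{0})=h_0>0$, while $\boldsymbol{t}^{\boldsymbol{j}}\le\Vert\boldsymbol{t}\Vert r^{\,j_1+j_2-1}$ for $\Vert\boldsymbol{t}\Vert\le r<1$ gives $0\le h(\boldsymbol{t})-h_0\le r^{-1}\big(h((r,r))-h_0\big)\Vert\boldsymbol{t}\Vert$; with $\Vert f_k(\boldsymbol{s})\Vert\le C\gamma'^{\,k}$ this makes $\sum_k\big(h(f_k(\boldsymbol{s}))/h_0-1\big)$ converge, so $H_n(\boldsymbol{s})\to H(\boldsymbol{s})\in[1,\infty)$, $H(\boldsymbol{0})=1$ --- note that no moment assumption on the immigration law is needed. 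Hence $(h_0\gamma)^{-n}g_n(\boldsymbol{s})=\gamma^{-n}f_n(\boldsymbol{s})H_n(\boldsymbol{s})\to Q(\boldsymbol{s})H(\boldsymbol{s})=:R(\boldsymbol{s})\in\mathbb{R}_{+}^2$, which is \eqref{2.1}.

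The boundary conditions then follow from the properties of $Q$, from $1\le H<\infty$, and from $H$ being $C^1$ near $\boldsymbol{0}$ with $H(\boldsymbol{0})=1$ (the latter because $\partial_j\log H(\boldsymbol{s})=\sum_k\partial_j\big[h(f_k(\boldsymbol{s}))\big]/h(f_k(\boldsymbol{s}))$ converges uniformly near $\boldsymbol{0}$, the Jacobian of $f_k$ decaying geometrically there): $R(\boldsymbol{0})=\boldsymbol{0}$ and $0<\Vert R(\boldsymbol{s})\Vert<\infty$ off $\{\boldsymbol{0},\boldsymbol{1}\}$ give \eqref{2.3}; $\Vert R(\boldsymbol{s})\Vert\ge\Vert Q(\boldsymbol{s})\Vert\to\infty$ as $\boldsymbol{s}\to\boldsymbol{1}$ and, by the product rule, $R'(\boldsymbol{s})\to Q'(\boldsymbol{0})H(\boldsymbol{0})=P_0$ as $\boldsymbol{s}\to\boldsymbol{0}$ (the cross term vanishing since $Q(\boldsymbol{0})=\boldsymbol{0}$ and $H'$ is bounded near $\boldsymbol{0}$) give \eqref{2.4}. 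For \eqref{2.2}, from $f_k(f(\boldsymbol{s}))=f_{k+1}(\boldsymbol{s})$ and \eqref{1.2} one gets the identity $g_{n+1}(\boldsymbol{s})=h(\boldsymbol{s})\,g_n(f(\boldsymbol{s}))$; dividing by $(h_0\gamma)^{n+1}$ and letting $n\to\infty$ (legitimate because $f(\boldsymbol{s})\in[0,1]^2\setminus\{\boldsymbol{1}\}$ and $h(\boldsymbol{s})\ge h_0>0$) gives $h(\boldsymbol{s})R(f(\boldsymbol{s}))=h_0\gamma R(\boldsymbol{s})$. For uniqueness, if $\widetilde R$ satisfies \eqref{2.2}--\eqref{2.4}, iterating \eqref{2.2} gives $\widetilde R(\boldsymbol{s})=H_n(\boldsymbol{s})\,\gamma^{-n}\widetilde R(f_n(\boldsymbol{s}))$; since $\widetilde R(\boldsymbol{0})=\boldsymbol{0}$ and $\widetilde R'(\boldsymbol{\xi})\to P_0$ as $\boldsymbol{\xi}\to\boldsymbol{0}$, the mean value theorem (applied componentwise) gives $\gamma^{-n}\widetilde R(f_n(\boldsymbol{s}))\to P_0Q(\boldsymbol{s})$, so $\widetilde R(\boldsymbol{s})=H(\boldsymbol{s})P_0Q(\boldsymbol{s})=R(\boldsymbol{s})$. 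Finally, $X_n\ge Z_n\neq\boldsymbol{0}$ forces $P_i(X_n=\boldsymbol{0})=0$, so $g_n^{(i)}(\boldsymbol{s})=\sum_{\boldsymbol{j}\neq\boldsymbol{0}}P_i(X_n=\boldsymbol{j})\boldsymbol{s}^{\boldsymbol{j}}$; the power series $(h_0\gamma)^{-n}g_n^{(i)}$ have nonnegative coefficients and converge locally boundedly on $[0,r]^2$ for each $r<1$, so each coefficient converges, which is \eqref{2.4a} with $r_{i,\boldsymbol{j}}=\lim_n(h_0\gamma)^{-n}P_i(X_n=\boldsymbol{j})$, and $R_i(\boldsymbol{s})=\sum_{\boldsymbol{j}\neq\boldsymbol{0}}r_{i,\boldsymbol{j}}\boldsymbol{s}^{\boldsymbol{j}}$ on $[0,1)^2$, extended to the remainder of $[0,1]^2\setminus\{\boldsymbol{1}\}$ by monotone convergence.

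The main obstacle is the convergence \eqref{2.1} together with the normalization $\lim_{\boldsymbol{s}\to\boldsymbol{0}}R'(\boldsymbol{s})=P_0$: because $A$ is not assumed positively regular, Perron--Frobenius is unavailable for it and everything must be extracted from the bare hypothesis $A^n\gamma^{-n}\to P_0$. The decisive point is the geometric decay of $f_n(\boldsymbol{s})$ at a rate $\gamma'$ with $\gamma'^{\,2}<\gamma$ --- available only because $\gamma<1$ --- since this is what makes the nonlinear errors in the recursion for $\gamma^{-n}f_n$ summable and, afterwards, makes the product $\prod_k h(f_k(\boldsymbol{s}))/h_0$ converge. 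Once those are in hand, the identity $g_{n+1}(\boldsymbol{s})=h(\boldsymbol{s})g_n(f(\boldsymbol{s}))$, the boundary conditions, the uniqueness, and the coefficient identification are all routine.
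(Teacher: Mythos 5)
Your argument is correct and follows essentially the same route as the paper: factor $(h_0\gamma)^{-n}g_n(\boldsymbol{s})=\big(\gamma^{-n}f_n(\boldsymbol{s})\big)\cdot\big(h_0^{-n}\prod_{k<n}h(f_k(\boldsymbol{s}))\big)$, invoke the no-immigration limit $\gamma^{-n}f_n\to Q$ of \cite{AV95}, show the immigration product converges via the geometric decay of $\|f_k(\boldsymbol{s})\|$, and then read off the functional equation, boundary behavior, uniqueness, and coefficient limits. The only real difference is that you reprove the needed \cite{AV95} facts (the rate $\gamma'^2<\gamma$ and the variation-of-constants limit for $\gamma^{-n}f_n$) rather than citing them, and your derivations of $R\to\infty$ at $\boldsymbol{1}$ and of uniqueness are a bit cleaner than the paper's, but the overall strategy is identical.
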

\noindent
\begin{proof}
For $\boldsymbol{s}\in[0,1]^2\setminus\{\boldsymbol{1}\}$, it is known from Athreya and Vidyashankar \cite[Theorem 1]{AV95} that
$\widetilde{Q}_n(\emph{\textbf{s}})=\frac{f_n(\emph{\textbf{s}})}{\gamma^n}\rightarrow \widetilde{Q}(\emph{\textbf{s}})$ satisfying that $\widetilde{Q}(\boldsymbol{0})=\boldsymbol{0}$ and $0<\Vert \widetilde{Q}(\boldsymbol{s})\Vert <\infty$ for $\boldsymbol{s}\in[0,1]^2\setminus\{\boldsymbol{1}\}$.
Define
$S_n(\emph{\textbf{s}})
=h_0^{-n}\prod\limits_{k=0}^{n-1}h(f_k(\emph{\textbf{s}}))$ for
$\boldsymbol{s}\in[0,1]^2\setminus\{\boldsymbol{1}\}$.
Then
$S_n(\emph{\textbf{s}})$ is an increasing sequence for fixed $\emph{\textbf{s}}$.
From (\ref{1.2}), to prove (\ref{2.1}), it suffices to show that $\lim\limits_{n\rightarrow\infty}S_n(\emph{\textbf{s}})
=\prod\limits_{k=0}^{\infty}\frac{h(f_k(\emph{\textbf{s}}))}{h_0}=S(\emph{\textbf{s}})<\infty,$
which is equivalent to showing
\begin{equation}\label{2.5}
 \sum_{k=0}^{\infty}\big(h(f_k(\emph{\textbf{s}}))-h_0\big)<\infty.
\end{equation}
Applying the mean value theorem, for $\emph{\textbf{s}}\in [0,1]^2\setminus\{\textbf{1}\},$
there is a $0<\theta<1$ such that
$$
 h(f_k(\emph{\textbf{s}}))-h_0
 = \sum_{j=1}^2f_k^{(j)}(\emph{\textbf{s}}) \frac{\partial h(f_k(\theta \emph{\textbf{s}}))}{\partial s_j}
 \leq \Vert f_k(\emph{\textbf{s}})\Vert \sum_{j=1}^2\frac{\partial h(f_k(\theta \emph{\textbf{s}}))}{\partial s_j}
 \leq  C_9\Vert f_k(\emph{\textbf{s}})\Vert,
 $$
where the existence of constant $C_9$ is due to the continuity of $\sum\limits_{j=1}^2
\frac{\partial h(f_k(\theta \emph{\textbf{s}}))}{\partial s_j}.$
 By Athreya and Vidyashankar \cite[Lemma 2]{AV95}, there are constants $C_{10}$ and $0<\delta<1$
 (depending on $\emph{\textbf{s}}$) such that $\Vert f_k(\emph{\textbf{s}})\Vert\leq C_{10}\delta^k$,
 which implies (\ref{2.5}). From (\ref{1.2}) we observe that
\begin{equation}\label{2.6}
\frac{g_n(f(\emph{\textbf{s}}))}{(h_0\gamma)^n}
=\frac{f_{n+1}(\emph{\textbf{s}})\prod_{k=1}^n{h(f_k(\emph{\textbf{s}}))}}{(h_0\gamma)^n}.
\end{equation}
Multiplying both sides of (\ref{2.6}) by $h(\emph{\textbf{s}})$ 
  and letting $n\rightarrow \infty,$ we obtain (\ref{2.2}).
 \par
Recalling that $\widetilde{Q}(\boldsymbol{0})=\boldsymbol{0}$, $\Vert \widetilde{Q}(\boldsymbol{s})\Vert$ and $S(\boldsymbol{s})$ are positive and finite for $\boldsymbol{s}\in[0,1]^2\setminus\{\boldsymbol{1}\}$. This yields that $R(\boldsymbol{0})=\boldsymbol{0}$ and $0<\Vert R(\boldsymbol{s})\Vert <\infty$ for $\boldsymbol{s}\in[0,1]^2\setminus\{\boldsymbol{0},\boldsymbol{1}\}$. By (\ref{2.2}), we know that $R(\emph{\textbf{s}})\geq \frac{R(f(\emph{\textbf{s}}))}{\gamma}$ for any $\boldsymbol{s}\in[0,1]^2\setminus\{\boldsymbol{0},\boldsymbol{1}\}$. Recursively, we have $R(\emph{\textbf{s}})\geq \frac{R(f_n(\emph{\textbf{s}}))}{\gamma^n}$ for any $\boldsymbol{s}\in[0,1]^2\setminus\{\boldsymbol{0},\boldsymbol{1}\}$ and $n\geq 1$. We can find $\emph{\textbf{s}}_n\uparrow \boldsymbol{1}$ as $n\uparrow\infty$ such that $f_n(\emph{\textbf{s}}_n)\uparrow \boldsymbol{1}$. Since $\gamma^n\downarrow 0$ as $n\uparrow\infty$, we will have $R(\emph{\textbf{s}}_n)\uparrow\infty$.
 Thus, $\lim\limits_{\emph{\textbf{s}}\rightarrow \textbf{1}}R(\emph{\textbf{s}})=\infty$.
 By \cite[Theorem 1]{AV95}, we obtain that
  $$
  \lim_{\emph{\textbf{s}}\rightarrow \textbf{0}}R'(\emph{\textbf{s}})=\lim_{\emph{\textbf{s}}\rightarrow \textbf{0}}(\widetilde{Q}(\emph{\textbf{s}})S(\emph{\textbf{s}}))'=\widetilde{Q}'(\textbf{0})=P_0.
  $$
Let $R(\emph{\textbf{s}})$ and $\widetilde{R}(\emph{\textbf{s}})$ be any two solutions
 that satisfy (\ref{2.2})-(\ref{2.4}), then
\begin{eqnarray*}
\Vert R(\emph{\textbf{s}})-\widetilde{R}(\emph{\textbf{s}})\Vert
& =& (h_0\gamma)^{-n}\Big\Vert \prod_{k=0}^{n-1}h(f_k(\emph{\textbf{s}}))R(f_n(\emph{\textbf{s}}))
  -\prod_{k=0}^{n-1}h(f_k(\emph{\textbf{s}}))\widetilde{R}(f_n(\emph{\textbf{s}}))\Big\Vert \\
& \leq  & \frac{\Vert g_n(\emph{\textbf{s}})\Vert}{(h_0\gamma)^n}
  \bigg[\frac{\Vert \prod_{k=0}^{n-1}h(f_k(\emph{\textbf{s}}))R(f_n(\emph{\textbf{s}}))
  -\prod_{k=0}^{n-1}h(f_k(\emph{\textbf{s}}))R(\textbf{0})-g_n(\emph{\textbf{s}})P_0\Vert}
  {\Vert g_n(\emph{\textbf{s}})\Vert}\\
& &+ \frac{\Vert\prod_{k=0}^{n-1}h(f_k(\emph{\textbf{s}}))\widetilde{R}(f_n(\emph{\textbf{s}}))
  -\prod_{k=0}^{n-1}h(f_k(\emph{\textbf{s}}))
  \widetilde{R}(\textbf{0})-g_n(\emph{\textbf{s}})P_0\Vert}{\Vert g_n(\emph{\textbf{s}})\Vert} \bigg].\\
&\triangleq & \frac{\Vert g_n(\emph{\textbf{s}})\Vert}{(h_0\gamma)^n}
(\Vert r_n(\emph{\textbf{s}})\Vert +\Vert \widetilde{r}_n(\emph{\textbf{s}})\Vert).
\end{eqnarray*}
Applying the mean value theorem, $\Vert r_n(\emph{\textbf{s}})\Vert$ and
 $\Vert \widetilde{r}_n(\emph{\textbf{s}})\Vert$
converge to $0$ for $\emph{\textbf{s}}\in[0,1]^2\setminus\{\textbf{1}\}$ as $n\rightarrow\infty.$
And since $\lim\limits_{n\rightarrow\infty}\frac{\Vert g_n(\emph{\textbf{s}})\Vert}{(h_0\gamma)^n}
 =\Vert R(\emph{\textbf{s}})\Vert<\infty$, we can see that $\Vert R(\emph{\textbf{s}})-\widetilde{R}(\emph{\textbf{s}})\Vert=0$ and thus $R(\emph{\textbf{s}})$ is the unique solution.
\par
It remains to show \eqref{2.4a}.
  Set $R(\emph{\textbf{s}})=(R_1(\emph{\textbf{s}}),R_2(\emph{\textbf{s}}))$ and $\widetilde{Q}(\emph{\textbf{s}})=(\widetilde{Q}_1(\emph{\textbf{s}}),\widetilde{Q}_2(\emph{\textbf{s}}))$. We can see that $R_i(\emph{\textbf{s}})=\widetilde{Q}_i(\emph{\textbf{s}})S(\emph{\textbf{s}})$ for fixed $i=1,2$. From the proof of \cite[Theorem 1]{AV95}, one has
 $$
\widetilde{Q}(\emph{\textbf{s}})=\emph{\textbf{s}}{P_0}^\top
+\sum_{k=0}^{\infty}\frac{\hat{g}(f_k(\emph{\textbf{s}}))}{\gamma^{k+1}}{P_0}^\top,
 $$
 where $\top$ represents the transpose, $\hat{g}(\emph{\textbf{s}})=f(\emph{\textbf{s}})-s{A}^\top$ (all components are nonnegative) and $\sum\limits_{k=0}^{\infty}\frac{\hat{g}(f_k(\emph{\textbf{s}}))}{\gamma^{k+1}}<\infty$. Thus, there is a nonnegative sequence $\{r_{i,\emph{\textbf{j}}}\}$ such that $R_i(\emph{\textbf{s}})=\sum\limits_{\boldsymbol{j}\in\mathbb{N}^2\setminus\{\boldsymbol{0}\}}
r_{i,\emph{\textbf{j}}}\emph{\textbf{s}}^{\emph{\textbf{j}}}$. According to the proof of $\lim\limits_{n\to\infty}\widetilde{Q}_n=\widetilde{Q}(\emph{\textbf{s}})$ and $\lim\limits_{n\to\infty}S_n=S(\emph{\textbf{s}})$, for fixed $0\leq s_2<1$, $\frac{g_n^{(i)}(s_1,s_2)}{(p_1h_0)^n}$ can be expanded as a power series with respect to $s_1$ and converges to
$R_i(s_1,s_2)$ uniformly on $[0,1)$. It follows that
$$
\lim_{n\to\infty}\frac{g_n^{(i)}(s_1,s_2)}{(p_1h_0)^n}
=\sum_{\emph{\textbf{j}}=(j_1,j_2)\in\mathbb{N}^2\setminus\{\boldsymbol{0}\}}
\lim_{n\to\infty}\frac{P_i(X_n=\emph{\textbf{j}})}{(h_0\gamma)^n}s_1^{j_1}s_2^{j_2}
=\sum_{\emph{\textbf{j}}=(j_1,j_2)\in\mathbb{N}^2\setminus\{\boldsymbol{0}\}}
r_{i,\emph{\textbf{j}}}s_1^{j_1}s_2^{j_2}
$$
Comparing coefficients on both sides implies \eqref{2.4a}.
 The proof is complete.
 \hfill $\Box$
\end{proof}
\par
The next proposition is needed in the proof of Theorem \ref{th3.1}. We recall that $\boldsymbol{v}$ and $\boldsymbol{u}$ are strictly positive left and right eigenvectors of the mean matrix $M=(m_{ij})$, corresponding to its maximal eigenvalue $\rho$.
\begin{proposition}\label{pro2.4}
Let $\boldsymbol{\lambda}$ be the expected immigration vector, i.e., $\boldsymbol{\lambda}=E(I_1)$, and $J=\{\textbf{j}=(j_1,j_2)|j_1>0, j_2>0\}$. Then
 \begin{equation}
  \label{2.17}
  \lim_{n\rightarrow\infty}\sup_{\textbf{j}\in J}
  \Big\Vert\frac{\textbf{j}M^n+\sum_{i=0}^{n-1}\boldsymbol{\lambda} M^i}{\boldsymbol{u}\cdot(\textbf{j}+\boldsymbol{\lambda}\rho^{-1}
  +\cdots+\boldsymbol{\lambda}\rho^{-n})\rho^{n}}-\boldsymbol{v}\Big\Vert=0.
 \end{equation}
 \end{proposition}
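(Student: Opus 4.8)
The plan is to reduce \eqref{2.17} to the classical Perron--Frobenius asymptotics for the primitive matrix $M$ and then to control the resulting error terms, the uniformity in $\textbf{j}$ coming from a homogeneity observation. Recall that for a $2\times2$ positively regular matrix one has $M^{n}=\rho^{n}(\Pi+E_{n})$, where $\Pi:=\boldsymbol{u}^{\top}\boldsymbol{v}$ is the rank-one spectral projection ($\boldsymbol{u}^{\top}$ a column, $\boldsymbol{v}$ a row, $\boldsymbol{v}\cdot\boldsymbol{u}=1$) satisfying $M\Pi=\Pi M=\rho\Pi$, $\Pi^{2}=\Pi$, and $\Vert E_{n}\Vert\to0$ (in fact geometrically); this is the eigenvalue theory cited in \cite{JAP67,KAR66}. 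Since $\textbf{j}\Pi=(\boldsymbol{u}\cdot\textbf{j})\boldsymbol{v}$ and $\boldsymbol{\lambda}\Pi=(\boldsymbol{u}\cdot\boldsymbol{\lambda})\boldsymbol{v}$, dividing the numerator of \eqref{2.17} by $\rho^{n}$ and using $\rho^{-n}\boldsymbol{\lambda}M^{i}=\rho^{-(n-i)}\big((\boldsymbol{u}\cdot\boldsymbol{\lambda})\boldsymbol{v}+\boldsymbol{\lambda}E_{i}\big)$ gives
$$\rho^{-n}\Big(\textbf{j}M^{n}+\sum_{i=0}^{n-1}\boldsymbol{\lambda}M^{i}\Big)=\Big[(\boldsymbol{u}\cdot\textbf{j})+(\boldsymbol{u}\cdot\boldsymbol{\lambda})\sum_{k=1}^{n}\rho^{-k}\Big]\boldsymbol{v}+\textbf{j}E_{n}+\sum_{i=0}^{n-1}\rho^{-(n-i)}\boldsymbol{\lambda}E_{i}.$$
The bracket equals $a_{n}:=\boldsymbol{u}\cdot(\textbf{j}+\boldsymbol{\lambda}\rho^{-1}+\cdots+\boldsymbol{\lambda}\rho^{-n})$, which is precisely the denominator of \eqref{2.17} divided by $\rho^{n}$, so after dividing through by $a_{n}$ the vector inside the norm equals $\boldsymbol{v}+a_{n}^{-1}\textbf{j}E_{n}+a_{n}^{-1}\sum_{i=0}^{n-1}\rho^{-(n-i)}\boldsymbol{\lambda}E_{i}$, and it remains to bound the last two terms uniformly over $\textbf{j}\in J$.

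For the $\textbf{j}$-dependent term, since $\boldsymbol{u}$ has strictly positive entries, $a_{n}\ge\boldsymbol{u}\cdot\textbf{j}\ge(\min\{u_{1},u_{2}\})(j_{1}+j_{2})\ge(\min\{u_{1},u_{2}\})\Vert\textbf{j}\Vert$, while $\Vert\textbf{j}E_{n}\Vert\le 2\Vert\textbf{j}\Vert\,\Vert E_{n}\Vert$; hence $a_{n}^{-1}\Vert\textbf{j}E_{n}\Vert\le\big(2/\min\{u_{1},u_{2}\}\big)\Vert E_{n}\Vert\to0$ with a bound independent of $\textbf{j}$ — this homogeneity is exactly the point on which the uniformity hinges. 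For the immigration term, $a_{n}\ge\boldsymbol{u}\cdot\textbf{j}\ge u_{1}+u_{2}=:c_{0}>0$ for every $\textbf{j}\in J$, so it suffices to check that $\sum_{i=0}^{n-1}\rho^{-(n-i)}\Vert E_{i}\Vert=\sum_{k=1}^{n}\rho^{-k}\Vert E_{n-k}\Vert\to0$, a quantity not involving $\textbf{j}$. This is a routine Toeplitz/dominated-convergence estimate: fix $K$, bound the tail by $\sum_{k>K}\rho^{-k}\Vert E_{n-k}\Vert\le(\sup_{i}\Vert E_{i}\Vert)\rho^{-K}/(1-\rho^{-1})$, and note that for $n$ large the head $\sum_{k\le K}\rho^{-k}\Vert E_{n-k}\Vert$ is small because $\Vert E_{n-k}\Vert\to0$ uniformly over $k\le K$. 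Combining the two bounds shows the norm in \eqref{2.17} is $o(1)$ uniformly over $\textbf{j}\in J$, which is the assertion.

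The only genuinely delicate issue is the uniformity over the infinite index set $J$, and it is dispatched by the scaling remark above: both $\textbf{j}M^{n}$ and the lower bound for $a_{n}$ are homogeneous of degree one in $\textbf{j}$, so the $\textbf{j}$-dependence cancels in the main error term, while the immigration error term depends on $\textbf{j}$ only through the harmless lower bound $a_{n}\ge c_{0}$. Everything else is elementary bookkeeping around the estimate $\rho^{-n}M^{n}\to\Pi$.
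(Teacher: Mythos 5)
Your proof is correct and follows essentially the same route as the paper: Perron--Frobenius asymptotics for $M^n$, a Toeplitz-type split for the immigration sum, and degree-one homogeneity in $\textbf{j}$ to secure uniformity over $J$ (together with $\boldsymbol{u}\cdot\textbf{j}\geq u_1+u_2$, which uses that $J$ consists of integer vectors with both coordinates at least $1$). Your packaging via the spectral projection $M^n=\rho^n(\Pi+E_n)$ in a single unified calculation is a bit cleaner than the paper's, which first proves the auxiliary limit \eqref{2.18} on the normalized cross-section $F=\{\boldsymbol{x}:\boldsymbol{u}\cdot\boldsymbol{x}=1\}$ and then splits the fraction into the two pieces \eqref{2.20} and \eqref{2.21}, but the substance is identical.
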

\begin{proof} Before giving the proof of (\ref{2.17}), we first show that
\begin{equation} \label{2.18}
\lim_{n\rightarrow\infty}\sup\limits_{\emph{\textbf{x}}\in F}\Big\Vert \emph{\textbf{x}}\Big(\sum_{i=0}^{n-1}M^i\Big)\rho^{-n}
-\Big(\sum_{i=1}^n\rho^{-i}\Big)\boldsymbol{v}\Big\Vert=0,
\end{equation}
 where $F=\{\emph{\textbf{x}}=(x_1,x_2)|x_i>0,\boldsymbol{u}\cdot \emph{\textbf{x}}=1\}.$
Notice that for $1\leq k<n$, the norm in (\ref{2.18}) can be divided into two parts,
$$
\Big\Vert \frac{\boldsymbol{x}}{\rho^{n}}\Big(\sum\limits_{i=0}^{k-1}M^i\Big)
-\Big(\sum\limits_{i=n-k+1}^{n}\frac{1}{\rho^{i}}\Big)\boldsymbol{v}\Big\Vert
\ \
\text{and}
\ \
\Big\Vert \frac{\boldsymbol{x}}{\rho^{n}}\Big(\sum\limits_{i=k}^{n-1}M^i\Big)
-\Big(\sum\limits_{i=1}^{n-k}\frac{1}{\rho^{i}}\Big)\boldsymbol{v}\Big\Vert.
$$
It is known from the Frobenius theorem \cite[Lemma V.6.1]{AKN72} that
 \begin{equation}
 \label{2.19}
 \lim_{n\rightarrow\infty}\sup_{\emph{\textbf{x}}\in F}\big\Vert \emph{\textbf{x}}M^n\rho^{-n}-\boldsymbol{v}\big\Vert=0.
 \end{equation}
 Then, for any given $\varepsilon>0,$ there is $k_1\in\mathbb{N}$ such that for all $k>k_1,$ $\sup\limits_{\emph{\textbf{x}}\in F}\big\Vert \emph{\textbf{x}}M^k\rho^{-k}-\boldsymbol{v}\big\Vert<\frac{\varepsilon}{2}.$
 Thus, for $n>k_1,$
 $$
 \sup_{\emph{\textbf{x}}\in F}\Big\Vert \emph{\textbf{x}}M^{n-1}\rho^{-n}
 -\frac{\boldsymbol{v}}{\rho}\Big\Vert<\frac{\varepsilon}{2\rho},\ \ \cdots,\ \ \sup_{\emph{\textbf{x}}\in F}\Big\Vert \emph{\textbf{x}}M^{k_1}\rho^{-n}
 -\frac{\boldsymbol{v}}{\rho^{n-k_1}}\Big\Vert<\frac{\varepsilon}{2\rho^{n-k_1}}.
 $$
Therefore
$$
\sup_{\emph{\textbf{x}}\in F}\Big\Vert \emph{\textbf{x}}\Big(\sum_{i=k_1}^{n-1}M^i\Big)\rho^{-n}
-\Big(\sum_{i=1}^{n-k_1}\rho^{-i}\Big)\boldsymbol{v}\Big\Vert<\Big(\frac{1}{\rho}+\cdots
+\frac{1}{\rho^{n-k_1}}\Big)\frac{\varepsilon}{2}.
$$
Obviously, for given $k_1,$
$$
\lim_{n\rightarrow\infty}\sup_{\emph{\textbf{x}}\in F}
\Big\Vert \emph{\textbf{x}}\Big(\sum_{i=0}^{k_1-1}M^i\Big)\rho^{-n}
-\Big(\sum_{i=n-k_1+1}^{n}\rho^{-i}\Big)\boldsymbol{v}\Big\Vert=0,
$$
thus verifying (\ref{2.18}). Now, we return to the proof of (\ref{2.17}). Notice that
 $$
 \frac{\sum_{i=0}^{n-1}\boldsymbol{\lambda}M^i}{\boldsymbol{u}\cdot(\emph{\textbf{j}}
 +\frac{\boldsymbol{\lambda}}{\rho}+\cdots+\frac{\boldsymbol{\lambda}}{\rho^n})\rho^n}
 =\frac{\boldsymbol{u}\cdot(\frac{\boldsymbol{\lambda}}{\rho}+\cdots+\frac{\boldsymbol{\lambda}}{\rho^n})}
 {\boldsymbol{u}\cdot(\emph{\textbf{j}}
 +\frac{\boldsymbol{\lambda}}{\rho}+\cdots+\frac{\boldsymbol{\lambda}}{\rho^n})}
 \times\frac{\sum_{i=0}^{n-1}\boldsymbol{\lambda} M^i}{\boldsymbol{u}\cdot(\frac{\boldsymbol{\lambda}}{\rho}
 +\cdots+\frac{\boldsymbol{\lambda}}{\rho^n})\rho^n}
 $$
  and thus by (\ref{2.18}),
  \begin{equation}
  \label{2.20}
  \lim_{n\rightarrow\infty}\sup_{\emph{\textbf{j}}\in J}\Big\Vert\frac{\sum_{i=0}^{n-1}\boldsymbol{\lambda} M^i}{{\boldsymbol{u}\cdot(\emph{\textbf{j}}+\frac{\boldsymbol{\lambda}}{\rho}+\cdots
  +\frac{\boldsymbol{\lambda}}{\rho^n})\rho^n}}
  -\frac{(\boldsymbol{u}\cdot\frac{\boldsymbol{\lambda}}{\rho-1})\boldsymbol{v}}{\boldsymbol{u}\cdot(\emph{\textbf{j}}
  +\frac{\boldsymbol{\lambda}}{\rho-1})}\Big\Vert=0.
  \end{equation}
However,
$$
\frac{\emph{\textbf{j}}M^n}{{\boldsymbol{u}\cdot(\emph{\textbf{j}}+\frac{\boldsymbol{\lambda}}{\rho}+\cdots
+\frac{\boldsymbol{\lambda}}{\rho^n})\rho^n}}=\frac{(\boldsymbol{u}\cdot\emph{\textbf{j}} )(\emph{\textbf{j}}M^n)}{{\boldsymbol{u}\cdot(\emph{\textbf{j}}+\frac{\boldsymbol{\lambda}}{\rho}
+\cdots+\frac{\boldsymbol{\lambda}}{\rho^n})(\boldsymbol{u}\cdot\emph{\textbf{j}})\rho^n}}
$$
and by (\ref{2.19})
\begin{equation}
\label{2.21}
\lim_{n\rightarrow\infty}\sup_{\emph{\textbf{j}}\in J}\Big\Vert\frac{\emph{\textbf{j}}M^n}
{{\boldsymbol{u}\cdot(\emph{\textbf{j}}+\frac{\boldsymbol{\lambda}}{\rho}+\cdots
+\frac{\boldsymbol{\lambda}}{\rho^n})\rho^n}}
-\frac{(\boldsymbol{u}\cdot\emph{\textbf{j}})\boldsymbol{v}}{\boldsymbol{u}\cdot(\emph{\textbf{j}}+\frac{\boldsymbol{\lambda}}{\rho-1})}\Big\Vert=0.
\end{equation}
 Hence, (\ref{2.17}) follows from (\ref{2.20}) and (\ref{2.21}). The proof is complete.
 \hfill $\Box$
\end{proof}
\noindent
\text{\em{Proof of Theorem~\ref{th3.1}}}\ \ \ By conditioning on $X_n$, for any nonzero vector $\boldsymbol{l}$ satisfying $l_1\neq l_2$ and $\varepsilon>0$,
\begin{eqnarray*}
& &(h_0\gamma)^{-n}P_i\Big(\Big|\frac{\boldsymbol{l}\cdot X_{n+1}}{\textbf{1}\cdot X_n}-\frac{\boldsymbol{l}\cdot(X_nM)}{\textbf{1}\cdot X_n}\Big|>\varepsilon\Big)\\
&=& \sum_{\boldsymbol{j}\in\mathbb{N}^2\setminus\{\boldsymbol{0}\}}P(|\boldsymbol{l}\cdot X_{n+1}-\boldsymbol{l}\cdot(\emph{\textbf{j}}M)|
>\varepsilon(\textbf{1}\cdot \emph{\textbf{j}})|X_n=\emph{\textbf{j}})\frac{
P_i(X_n=\emph{\textbf{j}})}{(h_0\gamma)^n}\\
&=& \sum_{\boldsymbol{j}\in\mathbb{N}^2\setminus\{\boldsymbol{0}\}}\phi(\emph{\textbf{j}},\varepsilon)
\frac{P_i(X_n=\emph{\textbf{j}})}{(h_0\gamma)^n},
 \end{eqnarray*}
 where the last equation holds since the process $\{X_n\}$ is temporally homogeneous.
We shall show that there is $0<\lambda_0<1$ such that
 \begin{equation}\label{3.3}
\phi(\emph{\textbf{j}},\varepsilon)= O(\lambda_0^{\textbf{1}\cdot \emph{\textbf{j}}}),
\end{equation}
which means that there is a constant $C_{11}$ such that $\phi(\emph{\textbf{j}},\varepsilon)\leq C_{11}\lambda_0^{\textbf{1}\cdot \emph{\textbf{j}}}$. This yields that
$$
\phi(\emph{\textbf{j}},\varepsilon)\frac{P_i(X_n=\emph{\textbf{j}})}{(h_0\gamma)^n}
\leq C_{11}\lambda_0^{\textbf{1}\cdot \emph{\textbf{j}}}\frac{P_i(X_n=\emph{\textbf{j}})}{(h_0\gamma)^n}.
$$
By \eqref{2.4a}, we obtain that
$$
\lim_{n\to\infty}\phi(\emph{\textbf{j}},\varepsilon)\frac{P_i(X_n=\emph{\textbf{j}})}{(h_0\gamma)^n}
=\phi(\emph{\textbf{j}},\varepsilon)r_{i,\emph{\textbf{j}}}
$$
and
$$
\lim_{n\to\infty}C_{11}\lambda_0^{\textbf{1}\cdot \emph{\textbf{j}}}\frac{P_i(X_n=\emph{\textbf{j}})}{(h_0\gamma)^n}
=C_{11}\lambda_0^{\textbf{1}\cdot \emph{\textbf{j}}}r_{i,\emph{\textbf{j}}}.
$$
Since $\sum\limits_{\boldsymbol{j}\in\mathbb{N}^2\setminus\{\boldsymbol{0}\}}
\lambda_0^{\textbf{1}\cdot \emph{\textbf{j}}}r_{i,\emph{\textbf{j}}}=R_i(\lambda_0, \lambda_0)<\infty$, then by a slight modification of the dominated convergence theorem (see \cite{R87}) we have
$$
\lim_{n\to\infty}\sum_{\boldsymbol{j}\in\mathbb{N}^2\setminus\{\boldsymbol{0}\}}\phi(\emph{\textbf{j}},\varepsilon)
\frac{P_i(X_n=\emph{\textbf{j}})}{(h_0\gamma)^n}
=\sum_{\boldsymbol{j}\in\mathbb{N}^2\setminus\{\boldsymbol{0}\}}\phi(\emph{\textbf{j}},\varepsilon)r_{i,\emph{\textbf{j}}}.
$$
So the proof of \eqref{3.1} will be complete if we show \eqref{3.3}.
  Notice that for given $X_n=\emph{\textbf{j}}=(j_1,j_2)\in\mathbb{N}^2\setminus\{\boldsymbol{0}\}$, by the branching property,
\begin{equation}\label{2.15}
X_{n+1}=\sum_{r=1}^2\sum_{m=1}^{j_r}Z_{1,m,r}+I_{n+1},
\end{equation}
where for fixed $r$, $\{Z_{1,m,r}\}_{m\geq 1}$ are $i.i.d.$ $\mathbb{N}^2$ valued random vectors
 distributed as the population at time $1$ initiated by a particle of type $r$ at time $0$. $Z_{1,m,1}$, $Z_{1,m,2}$ and $I_{n+1}$ are all independent.
Recalling that $\{I_n\}_{n\geq1}$ are $i.i.d.$ Thus we have
\begin{equation}\label{3.10d}
\boldsymbol{l}\cdot X_{n+1}-\boldsymbol{l}\cdot(\emph{\textbf{j}}M)=\sum_{r=1}^2\sum_{m=1}^{j_r}\boldsymbol{l}\cdot (Z_{1,m,r}-e_rM)+\boldsymbol{l}\cdot I_{n+1}.
\end{equation}
 On the one hand, for $\alpha>1, \boldsymbol{l}=(l_1,l_2)$ and $\emph{\textbf{j}}=(j_1,j_2)$, applying Markov's inequality,
\begin{equation}\label{3.10b}
\begin{split}
 &{} P(\boldsymbol{l}\cdot X_{n+1}-\boldsymbol{l}\cdot(\emph{\textbf{j}}M)>\varepsilon(\textbf{1}\cdot \emph{\textbf{j}})|X_n=\emph{\textbf{j}})\\
 \leq &{}P\bigg(\sum_{r=1}^2\sum_{m=1}^{j_r}\boldsymbol{l}\cdot \big(Z_{1,m,r}-e_rM\big)>\frac{\varepsilon(\textbf{1}\cdot \emph{\textbf{j}})}{2}\Big)
+P\Big(\boldsymbol{l}\cdot I_1>\frac{\varepsilon(\textbf{1}\cdot \emph{\textbf{j}})}{2}\Big)\\
\leq & {}E\left[\alpha^{\sum_{r=1}^2\sum_{m=1}^{j_r}\boldsymbol{l}\cdot \big(Z_{1,m,r}-e_rM\big)}\right]\alpha^{-\frac{\varepsilon(\textbf{1}\cdot
 \emph{\textbf{j}})}{2}}+E\left(e^{\theta_0(\textbf{1}\cdot I_1)}\right)e^{-\frac{\theta_0\varepsilon(\textbf{1}\cdot
\emph{\textbf{j}})}{2\Vert \boldsymbol{l}\Vert}}\\
 =& {}\big[\Gamma_{1,(1)}(\alpha)\big]^{j_1}
\big[\Gamma_{1,(2)}(\alpha)\big]^{j_2}
+E\left(e^{\theta_0(\textbf{1}\cdot I_1)}\right)e^{-\frac{\theta_0\varepsilon(\textbf{1}\cdot
\emph{\textbf{j}})}{2\Vert \boldsymbol{l}\Vert}},
\end{split}
\end{equation}
where
$$
\Gamma_{1,(1)}(\alpha)=f_1^{(1)}(\alpha^{l_1},\alpha^{l_2})
\alpha^{-[\boldsymbol{l}\cdot(e_1M)+\frac{\varepsilon}{2}]},
\ \ \
\Gamma_{1,(2)}(\alpha)=f_1^{(2)}(\alpha^{l_1},\alpha^{l_2})
\alpha^{-[\boldsymbol{l}\cdot(e_2M)+\frac{\varepsilon}{2}]}.
$$
 Then, $\Gamma_{1,(1)}(1)=1$ and
$\lim\limits_{\alpha\downarrow1}\Gamma_{1,(1)}'(\alpha)=l_1m_{11}+l_2m_{12}-\left(\boldsymbol{l}\cdot(e_1M)+\frac{\varepsilon}{2}\right)
=-\frac{\varepsilon}{2}<0$.
Thus, there is $\alpha_0>1$ such that $0<\Gamma_{1,(1)}(\alpha_0)<1.$ Similar calculations hold for $\Gamma_{1,(2)}(\alpha)$. Since $E(e^{\theta_0(\textbf{1}\cdot I_1)})<\infty$ for some $\theta_0>0$, there are constants $C_{12}$ and $0<\lambda_3<1$ such that
$E\left(e^{\theta_0(\textbf{1}\cdot I_1)}\right)e^{-\frac{\theta_0\varepsilon(\textbf{1}\cdot
\emph{\textbf{j}})}{2\Vert \boldsymbol{l}\Vert}}\leq C_{12}{\lambda_3}^{\textbf{1}\cdot
\emph{\textbf{j}}}$.
Thus, there is $0<\lambda_4<1$ such that
\begin{equation}\label{3.5}
P(\boldsymbol{l}\cdot X_{n+1}-\boldsymbol{l}\cdot(\emph{\textbf{j}}M)>\varepsilon(\textbf{1}\cdot \emph{\textbf{j}})|X_n=\emph{\textbf{j}})
=O(\lambda_4^{\textbf{1}\cdot \emph{\textbf{j}}}).
\end{equation}
On the other side, for $0<\beta<1,$ by Markov inequality one has
\begin{equation}\label{3.10c}
\begin{split}
 & {}P\left(\boldsymbol{l}\cdot X_{n+1}-\boldsymbol{l}\cdot(\emph{\textbf{j}}M)<-\varepsilon(\textbf{1}\cdot \emph{\textbf{j}})|X_n=\emph{\textbf{j}}\right)\\
\leq&{} P\bigg(\sum_{r=1}^2\sum_{m=1}^{j_r}\boldsymbol{l}\cdot \big(Z_{1,m,r}-e_rM\big)<-\frac{\varepsilon(\textbf{1}\cdot \emph{\textbf{j}})}{2}\bigg)
+P\Big(\boldsymbol{l}\cdot I_1<-\frac{\varepsilon(\textbf{1}\cdot \emph{\textbf{j}})}{2}\Big)\\
 \leq &{} E\left[\beta^{\sum_{r=1}^2\sum_{m=1}^{j_r}\boldsymbol{l}\cdot \big(Z_{1,m,r}-e_rM\big)}\right]\beta^{\frac{\varepsilon(\textbf{1}\cdot j)}{2}}
+E\left(e^{\theta_0(\textbf{1}\cdot I_1)}\right)e^{-\frac{\theta_0\varepsilon(\textbf{1}\cdot
\emph{\textbf{j}})}{2\Vert \boldsymbol{l}\Vert}}\\
=& {} \big[\Gamma_{2,(1)}(\beta)\big]^{j_1}
\big[\Gamma_{2,(1)}(\beta)\big]^{j_2}
+E\left(e^{\theta_0(\textbf{1}\cdot I_1)}\right)e^{-\frac{\theta_0\varepsilon(\textbf{1}\cdot
\emph{\textbf{j}})}{2\Vert \boldsymbol{l}\Vert}},
\end{split}
\end{equation}
where $$
\Gamma_{2,(1)}(\beta)=f_1^{(1)}(\beta^{l_1},\beta^{l_2})
\beta^{-[\boldsymbol{l}\cdot(e_1M)-\frac{\varepsilon}{2}]},
\ \ \
\Gamma_{2,(2)}(\beta)=f_1^{(2)}(\beta^{l_1},\beta^{l_2})
\beta^{-[\boldsymbol{l}\cdot(e_2M)-\frac{\varepsilon}{2}]}.
$$
Then, $\Gamma_{2,(1)}(1)=1$ and
$\lim\limits_{\beta\uparrow1}\Gamma_{2,(1)}'(\beta)=\frac{\varepsilon}{2}>0.$
Then, there is $0<\beta_0<1$ such that $0<\Gamma_{2,(2)}(\beta_0)<1.$ Similar calculations hold for $\Gamma_{2,(2)}(\beta)$.
Thus, there is $0<\lambda_5<1$ such that
 \begin{equation}\label{3.6}
 P(\boldsymbol{l}\cdot X_{n+1}-\boldsymbol{l}\cdot(\emph{\textbf{j}}M)<-\varepsilon(\textbf{1}\cdot \emph{\textbf{j}})|X_n=\emph{\textbf{j}})
 =O(\lambda_5^{\textbf{1}\cdot \emph{\textbf{j}}}).
 \end{equation}
  Hence, combining (\ref{3.5}) and (\ref{3.6}) yields (\ref{3.3}).
\par
Now we turn to prove (\ref{3.2}). Let $k_0$ be fixed (to be chosen later),
by conditioning on $X_{n-k_0}$, for any nonzero vector $\boldsymbol{l}$ satisfying $l_1\neq l_2$ and $\varepsilon>0$,
\begin{eqnarray*}
& &(h_0\gamma)^{-n}P_i\Big(\Big|\frac{\boldsymbol{l}\cdot X_n}{\textbf{1}\cdot X_n}-\frac{\boldsymbol{l}\cdot \boldsymbol{v} }{\textbf{1}\cdot \boldsymbol{v} }\Big|>\varepsilon\Big)\\
&=& (h_0\gamma)^{-k_0}\sum_{\boldsymbol{j}\in\mathbb{N}^2\setminus\{\boldsymbol{0}\}}P\Big(\Big|\frac{\boldsymbol{l}\cdot X_n}{\textbf{1}\cdot X_n}-\frac{\boldsymbol{l}\cdot \boldsymbol{v}}{\textbf{1}\cdot \boldsymbol{v} }\Big|>\varepsilon\Big|X_{n-k_0}=\emph{\textbf{j}}\Big)\frac{P_i(X_{n-k_0}=\emph{\textbf{j}})}{(h_0\gamma)^{n-k_0}}\\
&=& \sum_{\boldsymbol{j}\in\mathbb{N}^2\setminus\{\boldsymbol{0}\}}\frac{\hat{\phi}(\emph{\textbf{j}},k_0,\varepsilon)}{(h_0\gamma)^{k_0}}
\frac{P_i(X_{n-k_0}=\emph{\textbf{j}})}{(h_0\gamma)^{n-k_0}},
 \end{eqnarray*}
where the last equation is due to homogeneity. We will show that there is $0<\lambda_6<1$ such that
\begin{equation}\label{3.7}
\hat{\phi}(\emph{\textbf{j}},k_0,\varepsilon)=O(\lambda_6^{\textbf{1}\cdot \emph{\textbf{j}}}).
\end{equation}
Then, there is a constant $\hat{C}_{11}>0$ such that
$$
\frac{\hat{\phi}(\emph{\textbf{j}},k_0,\varepsilon)}{(h_0\gamma)^{k_0}}
\frac{P_i(X_{n-k_0}=\emph{\textbf{j}})}{(h_0\gamma)^{n-k_0}}
\leq \hat{C}_{11}\lambda_6^{\textbf{1}\cdot \emph{\textbf{j}}}\frac{P_i(X_{n-k_0}=\emph{\textbf{j}})}{(h_0\gamma)^{n-k_0}}.
$$
By \eqref{2.4a}, we get that
$$
\lim_{n\to\infty}\frac{\hat{\phi}(\emph{\textbf{j}},k_0,\varepsilon)}{(h_0\gamma)^{k_0}}
\frac{P_i(X_{n-k_0}=\emph{\textbf{j}})}{(h_0\gamma)^{n-k_0}}
=\frac{\hat{\phi}(\emph{\textbf{j}},k_0,\varepsilon)}{(h_0\gamma)^{k_0}}r_{i,\emph{\textbf{j}}}
$$
and
$$
\lim_{n\to\infty}\hat{C}_{11}\lambda_6^{\textbf{1}\cdot \emph{\textbf{j}}}\frac{P_i(X_{n-k_0}=\emph{\textbf{j}})}{(h_0\gamma)^{n-k_0}}
=\hat{C}_{11}\lambda_6^{\textbf{1}\cdot \emph{\textbf{j}}}r_{i,\emph{\textbf{j}}}.
$$
Again by $\sum\limits_{\boldsymbol{j}\in\mathbb{N}^2\setminus\{\boldsymbol{0}\}}
\lambda_0^{\textbf{1}\cdot \emph{\textbf{j}}}r_{i,\emph{\textbf{j}}}=R_i(\lambda_0, \lambda_0)<\infty$, it follows from a slight modification of the dominated convergence theorem (see \cite{R87}) that
$$
\lim_{n\to\infty}\sum_{\boldsymbol{j}\in\mathbb{N}^2\setminus\{\boldsymbol{0}\}}
\frac{\hat{\phi}(\emph{\textbf{j}},k_0,\varepsilon)}{(h_0\gamma)^{k_0}}
\frac{P_i(X_{n-k_0}=\emph{\textbf{j}})}{(h_0\gamma)^{n-k_0}}
=\frac{1}{(h_0\gamma)^{k_0}}\sum_{\boldsymbol{j}\in\mathbb{N}^2\setminus\{\boldsymbol{0}\}}
\hat{\phi}(\emph{\textbf{j}},k_0,\varepsilon)r_{i,\emph{\textbf{j}}}.
$$
Thus, to prove \eqref{3.2}, it suffices to show \eqref{3.7}.
Conditioned on $X_{n-k_0}=\emph{\textbf{j}}=(j_1,j_2)\in \mathbb{N}^2\setminus\{\boldsymbol{0}\}$,
\begin{equation}\label{3.8}
 X_n=\sum_{r=1}^2\sum_{m=1}^{j_r}Z_{k_0,m,r}+\sum_{i=n-k_0+1}^{n}U_{n}^{(i)},
 \end{equation}
where for fixed $r,$ $\{Z_{k_0,m,r}\}_{m\geq 1}$ are $i.i.d.$ $\mathbb{N}^2$-valued random vectors
distributed as population at time $k_0$ initiated by a particle of type $r$ at time $0$.
$Z_{k_0,m,1}$, $Z_{k_0,m,2}$ and $U_{n}^{(i)}$ are all independent. Thus, from (\ref{3.8}) and $E(Z_n|Z_0)=Z_0M^n$ (see Athreya and Ney \cite{AKN72}, P184),
$$
E(\boldsymbol{l}\cdot X_n|X_{n-k_0}=\emph{\textbf{j}})
 =\boldsymbol{l}\cdot\Big(\emph{\textbf{j}}M^{k_0}+\sum_{i=0}^{k_0-1}\boldsymbol{\lambda} M^i\Big).
 $$
Note that for any $\varepsilon>0$ and $X_{n-k_0}=\emph{\textbf{j}}$,
$$
\frac{\boldsymbol{l}\cdot X_n}{\textbf{1}\cdot X_n}>\frac{\boldsymbol{l}\cdot \boldsymbol{v}}{\textbf{1}\cdot \boldsymbol{v} }+\varepsilon
$$
holds if and only if

\begin{eqnarray}\label{3.8a}
\frac{\tilde{\boldsymbol{l}}\cdot\left(X_n-\emph{\textbf{j}}M^{k_0}
-\sum_{i=0}^{k_0-1}\boldsymbol{\lambda}M^i\right)}
{\boldsymbol{u}\cdot\left(\emph{\textbf{j}}+\frac{\boldsymbol{\lambda}}{\rho}
+\cdots+\frac{\boldsymbol{\lambda}}{\rho^{k_0}}\right)\rho^{k_0}}
>\frac{-\tilde{\boldsymbol{l}}\cdot\left(\emph{\textbf{j}}M^{k_0}
+\sum_{i=0}^{k_0-1}\boldsymbol{\lambda}M^i\right)}
{{\boldsymbol{u}\cdot\left(\emph{\textbf{j}}+\frac{\boldsymbol{\lambda}}{\rho}
+\cdots+\frac{\boldsymbol{\lambda}}{\rho^{k_0}}\right)\rho^{k_0}}}
\end{eqnarray}
holds, where $\tilde{\boldsymbol{l}}=\boldsymbol{l}-\left(\frac{\boldsymbol{l}\cdot \boldsymbol{v}}{\textbf{1}\cdot \boldsymbol{v} }+\varepsilon\right)\textbf{1}.$ By Proposition \ref{pro2.4}, for each $\epsilon>0$ there is $k_0<\infty$ such that
\begin{equation}\label{3.9}
\sup_{\emph{\textbf{j}}\in J}\Big\Vert\frac{\emph{\textbf{j}}M^{k_0}+\sum_{i=0}^{{k_0}-1}\boldsymbol{\lambda} M^i}{{\boldsymbol{u}\cdot(\emph{\textbf{j}}+\frac{\boldsymbol{\lambda}}{\rho}+\cdots+\frac{\boldsymbol{\lambda}}
{\rho^{k_0}})\rho^{k_0}}}-\boldsymbol{v}\Big\Vert(2\Vert \boldsymbol{l} \Vert+\varepsilon)<\frac{\varepsilon(\textbf{1}\cdot \boldsymbol{v} )}{2}.
\end{equation}
By the Cauchy-Buniakowsky-Schwarz inequality, for any $\emph{\textbf{a}}=(a_1,a_2)$ and $\emph{\textbf{b}}=(b_1,b_2),$
\begin{equation}\label{3.10}
|\emph{\textbf{a}}\cdot \emph{\textbf{b}}|\leq(a_1^2+a_2^2)^{1/2}(b_1^2+b_2^2)^{1/2}
\leq2\Vert \emph{\textbf{a}}\Vert \Vert \emph{\textbf{b}}\Vert.
\end{equation}
From (\ref{3.9}) and (\ref{3.10}), we get that
$$\frac{-\tilde{\boldsymbol{l}}\cdot\left(\emph{\textbf{j}}M^{k_0}
+\sum_{i=0}^{k_0-1}\boldsymbol{\lambda}M^i\right)}{{\boldsymbol{u}\cdot(\emph{\textbf{j}}
+\frac{\boldsymbol{\lambda}}{\rho}
+\cdots+\frac{\boldsymbol{\lambda}}{\rho^{k_0}})\rho^{k_0}}}>-\tilde{\boldsymbol{l}}\cdot\boldsymbol{v}-\frac{\varepsilon(\textbf{1}\cdot \boldsymbol{v} )}{2}
=\frac{\varepsilon(\textbf{1}\cdot \boldsymbol{v} )}{2}.$$
This together with (\ref{3.8}) and (\ref{3.8a}) imply that for $\alpha>1$, $\emph{\textbf{j}}\neq \textbf{0},$
$\tilde{\boldsymbol{l}}=(\tilde{l}_1, \tilde{l}_2)$ and $\boldsymbol{u}=(u_1, u_2),$

\begin{equation}\label{3.10a}
\begin{split}
 &{} P\Big(\frac{\boldsymbol{l}\cdot X_n}{\textbf{1}\cdot X_n}-\frac{\boldsymbol{l}\cdot \boldsymbol{v}}{\textbf{1}\cdot \boldsymbol{v} }>\varepsilon|
X_{n-{k_0}}=\emph{\textbf{j}}\Big)\\
\leq &{} P\bigg(\frac{\tilde{\boldsymbol{l}}\cdot\big(X_n-\emph{\textbf{j}}M^{k_0}-\sum_{i=0}^{k_0-1}\boldsymbol{\lambda} M^i\big)}{\boldsymbol{u}\cdot\big(\emph{\textbf{j}}+\frac{\boldsymbol{\lambda}}{\rho}+\cdots+\frac{\boldsymbol{\lambda}}
{\rho^{k_0}}\big)\rho^{k_0}} >\frac{\varepsilon(\textbf{1}\cdot \boldsymbol{v} )}{2}\Big|X_{n-{k_0}}=\emph{\textbf{j}}\bigg)\\
=&{} P\bigg(\frac{\tilde{\boldsymbol{l}}\cdot\big[\sum_{r=1}^2\sum_{m=1}^{j_r}\big(Z_{k_0,m,r}-e_rM^{k_0}\big)
+\sum_{i=1}^{k_0}\big(U_{k_0}^{(i)}-\boldsymbol{\lambda}M^{k_0-i}\big)\big]}
{\boldsymbol{u}\cdot\big(\emph{\textbf{j}}
+\frac{\boldsymbol{\lambda}}{\rho}+\cdots+\frac{\boldsymbol{\lambda}}{\rho^{k_0}}\big)\rho^{k_0}}
>\frac{\varepsilon(\textbf{1}\cdot \boldsymbol{v} )}{2}\bigg)\\
 \leq &{} E\bigg[\alpha^{\sum_{r=1}^2\sum_{m=1}^{j_r}\tilde{\boldsymbol{l}}\cdot \big(Z_{k_0,m,r}-e_rM^{k_0}\big)
 +\sum_{i=1}^{k_0}\tilde{\boldsymbol{l}}\cdot \big(U_{k_0}^{(i)}-\boldsymbol{\lambda} M^{k_0-i}\big)}\bigg]
 \cdot \alpha^{-2^{-1}\varepsilon(\textbf{1}\cdot \boldsymbol{v} )\boldsymbol{u}\cdot\big(\emph{\textbf{j}}+\frac{\boldsymbol{\lambda}}
 {\rho}+\cdots+\frac{\boldsymbol{\lambda}}{\rho^{k_0}}\big)\rho^{k_0}}\\
=& {}\big[\Gamma_{3,(1)}(\alpha)\big]^{j_1}\big[\Gamma_{3,(1)}(\alpha)\big]^{j_2}\Gamma_4(\alpha),
\end{split}
\end{equation}
where $$
\Gamma_{3,(1)}(\alpha)=f_{k_0}^{(1)}(\alpha^{\tilde{l}_1},\alpha^{\tilde{l}_2})
\alpha^{-\big(\tilde{\boldsymbol{l}}\cdot (e_1M^{k_0})
+2^{-1}\varepsilon(\textbf{1}\cdot \boldsymbol{v} )u_1\rho^{k_0}\big)},
$$
$$
\Gamma_{3,(2)}(\alpha)=f_{k_0}^{(2)}(\alpha^{\tilde{l}_1},\alpha^{\tilde{l}_2})
\alpha^{-\big(\tilde{\boldsymbol{l}}\cdot (e_2M^{k_0})
+2^{-1}\varepsilon(\textbf{1}\cdot \boldsymbol{v} )u_2\rho^{k_0}\big)}
$$
and
$$
\Gamma_4(\alpha)=\Big[\prod_{i=1}^{k_0}h\big(f_{k_0-i}(\alpha^{\tilde{l}_1},\alpha^{\tilde{l}_2})\big)\Big]
\alpha^{-\big[\tilde{\boldsymbol{l}}\cdot \sum_{i=1}^{k_0}\boldsymbol{\lambda} M^{k_0-i}+2^{-1}\varepsilon(\textbf{1}\cdot \boldsymbol{v} )\boldsymbol{u}\cdot\big(\frac{\boldsymbol{\lambda}}
{\rho}+\cdots+\frac{\boldsymbol{\lambda}}{\rho^{k_0}}\big)\rho^{k_0}\big]}.
$$
Then, $\Gamma_{3,(1)}(1)=1$, $\Gamma_4(1)=1$, $\lim\limits_{\alpha\downarrow1}\Gamma_{3,(1)}'(\alpha)=-
2^{-1}\varepsilon(\textbf{1}\cdot \boldsymbol{v} )u_1\rho^{k_0}<0,$
 and
 $$
 \lim_{\alpha\downarrow1}\Gamma_4'(\alpha)=-2^{-1}\varepsilon(\textbf{1}\cdot \boldsymbol{v} )\boldsymbol{u}\cdot\big(\frac{\boldsymbol{\lambda}}
 {\rho}+\cdots+\frac{\boldsymbol{\lambda}}{\rho^{k_0}}\big)\rho^{k_0}<0.
 $$
Thus, there are $\alpha_1>1$ and $\alpha_2>1$ such that $0<\Gamma_{3,(1)}(\alpha_1)<1$ and $0<\Gamma_4(\alpha_2)<1$.
Similar calculations hold for $\Gamma_{3,(2)}(\alpha).$ Hence, there is a $0<\lambda_{7}<1$ such that
  \begin{equation}\label{3.11}
  P\left(\frac{\boldsymbol{l}\cdot X_n}{\textbf{1}\cdot X_n}>\frac{\boldsymbol{l}\cdot \boldsymbol{v}}{\textbf{1}\cdot \boldsymbol{v} }+\varepsilon\Big|X_{n-k_0}
  =\emph{\textbf{j}}\right)= O\left(\lambda_7^{\textbf{1}\cdot \emph{\textbf{j}}}\right).
  \end{equation}
\par
Similar techniques can be applied to the other side. Denote $\boldsymbol{l}^{\ast}=\boldsymbol{l}-(\frac{\boldsymbol{l}\cdot \boldsymbol{v} }{\textbf{1}\cdot \boldsymbol{v} }-\varepsilon)\textbf{1},$
 then for $\boldsymbol{l}^{\ast}=(l_1^{\ast}, l_2^{\ast})$
 and $0<\beta<1,$ by Markov's inequality,
\begin{eqnarray*}
& & P\Big(\frac{\boldsymbol{l}\cdot X_n}{\textbf{1}\cdot X_n}<\frac{\boldsymbol{l}\cdot \boldsymbol{v}}{\textbf{1}\cdot \boldsymbol{v} }-\varepsilon\Big|
    X_{n-{k_0}}=\emph{\textbf{j}}\Big)\\
&\leq & P\Big(\frac{\boldsymbol{l}^{\ast}\cdot (X_n-\emph{\textbf{j}}M^{k_0}-\sum_{i=0}^{k_0-1}\boldsymbol{\lambda} M^i)}{\boldsymbol{u}\cdot(\emph{\textbf{j}}+\frac{\boldsymbol{\lambda}}{\rho}+\cdots+\frac{\boldsymbol{\lambda}}{\rho^{k_0}})\rho^{k_0}}
    <-\frac{\varepsilon(\textbf{1}\cdot \boldsymbol{v} )}{2}\Big|X_{n-{k_0}}=\emph{\textbf{j}}\Big) \\
&=&P\bigg(\frac{\boldsymbol{l}^{\ast}\cdot \big[\sum_{r=1}^2\sum_{m=1}^{j_r}(Z_{k_0,m,r}-e_rM^{k_0})+\sum_{i=1}^{k_0}
   (U_{k_0}^{(i)}-\boldsymbol{\lambda}M^{k_0-i})\big]}{\boldsymbol{u}\cdot(\emph{\textbf{j}}+\frac{\boldsymbol{\lambda}}{\rho}+
   \cdots+\frac{\boldsymbol{\lambda}}{\rho^{k_0}})\rho^{k_0}}
   <-\frac{\varepsilon(\textbf{1}\cdot \boldsymbol{v} )}{2}\bigg)\\
& \leq & E\Big[\beta^{\sum_{r=1}^2\sum_{m=1}^{j_r}\boldsymbol{l}^{\ast}\cdot (Z_{k_0,m,r}-e_rM^{k_0})
    +\sum_{i=1}^{k_0}\boldsymbol{l}^{\ast}\cdot (U_{k_0}^{(i)}-\boldsymbol{\lambda} M^{k_0-i})}\Big]
    \beta^{2^{-1}\varepsilon(\textbf{1}\cdot \boldsymbol{v} )\boldsymbol{u}\cdot(\emph{\textbf{j}}
    +\frac{\boldsymbol{\lambda}}{\rho}+\cdots+\frac{\boldsymbol{\lambda}}{\rho^{k_0}})\rho^{k_0}}\\
&= & \big[\Gamma_{5,(1)}(\beta)\big]^{j_1}\big[\Gamma_{5,(2)}(\beta)\big]^{j_2}\Gamma_6(\beta),
 \end{eqnarray*}
where
$$
\Gamma_{5,(1)}(\beta)=f_{k_0}^{(1)}(\beta^{l^{\ast}_1},\beta^{l^{\ast}_2})
\beta^{-\left(\boldsymbol{l}^{\ast}\cdot(e_1M^{k_0})-2^{-1}\varepsilon(\textbf{1}\cdot \boldsymbol{v} )u_1\rho^{k_0}\right)},$$
 $$
 \Gamma_{5,(2)}(\beta)=f_{k_0}^{(2)}(\beta^{l^{\ast}_1},\beta^{l^{\ast}_2})\beta^{-\left(\boldsymbol{l}^{\ast}\cdot (e_2M^{k_0})
-2^{-1}\varepsilon(\textbf{1}\cdot \boldsymbol{v} )u_2\rho^{k_0}\right)}
 $$
 and
$$
 \Gamma_6(\beta)=\Big[\prod_{i=1}^{k_0}h(f_{k_0-i}(\beta^{l^{\ast}_1},\beta^{l^{\ast}_2}))\Big]
 \beta^{-\big(\boldsymbol{l}^{\ast}\cdot \sum_{i=1}^{k_0}\boldsymbol{\lambda} M^{k_0-i}-2^{-1}\varepsilon(\textbf{1}\cdot \boldsymbol{v} )\boldsymbol{u}\cdot(\frac{\boldsymbol{\lambda}}{\rho}
 +\cdots+\frac{\boldsymbol{\lambda}}{\rho^{k_0}})\rho^{k_0}\big)}.
 $$
Then, $\Gamma_{5,(1)}(1)=1$, $\Gamma_6(1)=1$,
 $\lim\limits_{\beta\uparrow1}\Gamma_{5,(1)}'(\beta)=2^{-1}\varepsilon(\textbf{1}\cdot \boldsymbol{v} )u_1\rho^{k_0}>0$
and
$$
   \lim_{\beta\uparrow1}\Gamma_6'(\beta)=2^{-1}\varepsilon(\textbf{1}\cdot \boldsymbol{v} )\boldsymbol{u}\cdot\Big(\frac{\boldsymbol{\lambda}}{\rho}
   +\cdots+\frac{\boldsymbol{\lambda}}{\rho^{k_0}}\Big)\rho^{k_0}>0.
     $$
 Thus, there are constants $0<\beta_1<1$ and $0<\beta_2<1$ such that $0<\Gamma_{5,(1)}(\beta_1)<1$ and $0<\Gamma_6(\beta_2)<1$. Similar calculations hold for $\Gamma_{5,(2)}(\beta).$
   Hence, there is $0<\lambda_{8}<1$ such that
   $$
  P\Big(\frac{\boldsymbol{l}\cdot X_n}{\textbf{1}\cdot X_n}
  <\frac{\boldsymbol{l}\cdot \boldsymbol{v}}{\textbf{1}\cdot \boldsymbol{v} }-\varepsilon\Big|X_{n-k_0}=\emph{\textbf{j}}\Big)
  = O(\lambda_{8}^{\textbf{1}\cdot \emph{\textbf{j}}}).
  $$
  This together with (\ref{3.11}) imply (\ref{3.7}).
 The proof is complete .
 \hfill $\Box$
 \par
We shall prove Theorem~\ref{th3.2} after stating the following lemma, which is due to Athreya and Vidyashankar \cite[Lemma 3]{AV95}.
\par
\noindent
\begin{lemma} \label{le3.1} Let $\{X_i\}_{i\geq1}$ be $i.i.d.$ with $E(X_1)=0, E(X_1^{2r})<\infty$ for some $r\geq1$ and $\bar{X_n}=\frac{\sum_{i=1}^nX_i}{n}$.
 Then, for all $n$,
  $$
 P(|\bar{X_n}|>\varepsilon)=O\left(n^{-r}\right),
 $$
 where $O$ means that there is a constant $C_{13}$ such that $P(|\bar{X_n}|>\varepsilon)\leq C_{13}n^{-r}$.
 \end{lemma}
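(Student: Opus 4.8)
The plan is to combine Markov's inequality at order $2r$ with a moment bound of Marcinkiewicz--Zygmund type, namely that $E\big(|\sum_{i=1}^n X_i|^{2r}\big)=O(n^{r})$. Writing $S_n=\sum_{i=1}^n X_i$, Markov's inequality gives
$$
P(|\bar X_n|>\varepsilon)=P(|S_n|>n\varepsilon)\le \frac{E(|S_n|^{2r})}{(n\varepsilon)^{2r}},
$$
so it suffices to show $E(|S_n|^{2r})\le C\,n^{r}$ for a constant $C$ depending only on $r$ and $E(|X_1|^{2r})$. I would first reduce to integer $r$: since $E(|X_1|^{p})^{1/p}$ is nondecreasing in $p$, replacing $r$ by $\lfloor r\rfloor$ only weakens the conclusion from $O(n^{-r})$ to $O(n^{-\lfloor r\rfloor})$ while keeping $O(n^{-r})$, and in fact for the applications in this paper $2r$ is an even integer, so I will simply assume $r\in\mathbb{N}$.

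For the moment bound I would expand multinomially,
$$
E(S_n^{2r})=\sum_{1\le i_1,\dots,i_{2r}\le n} E\big(X_{i_1}\cdots X_{i_{2r}}\big),
$$
group the terms according to the coincidence pattern (set partition of $\{1,\dots,2r\}$) of the index tuple, and use independence to factor each expectation over the distinct indices. Since $E(X_1)=0$, any term in which some index occurs exactly once vanishes, so only tuples in which every distinct index is repeated at least twice survive; such a tuple uses at most $r$ distinct values from $\{1,\dots,n\}$, whence there are at most $B_r\,n^{r}$ surviving tuples with $B_r$ depending only on $r$. For a surviving tuple with distinct indices $j_1,\dots,j_m$ of multiplicities $a_1,\dots,a_m$ ($a_\ell\ge 2$, $\sum a_\ell=2r$, $m\le r$), independence and the generalized H\"older inequality give
$$
\big|E\big(X_{j_1}^{a_1}\cdots X_{j_m}^{a_m}\big)\big|\le\prod_{\ell=1}^m E\big(|X_1|^{a_\ell}\big)\le\prod_{\ell=1}^m \big(E(|X_1|^{2r})\big)^{a_\ell/2r}=E(|X_1|^{2r}),
$$
again using monotonicity of $L^p$ norms. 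Combining the count with this uniform bound yields $E(S_n^{2r})\le B_r\,E(|X_1|^{2r})\,n^{r}$, and the lemma follows. Alternatively one may cite the Marcinkiewicz--Zygmund inequality $E(|S_n|^{2r})\le C_r\,E\big((\sum_{i=1}^n X_i^2)^{r}\big)$ and then bound $E\big((\sum X_i^2)^{r}\big)\le n^{r-1}\sum_i E(X_i^{2r})=E(X_1^{2r})\,n^{r}$ by Jensen's inequality applied to the convex map $t\mapsto t^{r}$; the elementary expansion above is essentially a self-contained proof of this inequality in the i.i.d.\ mean-zero case.

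The main obstacle is the bookkeeping in the counting step: one must verify carefully that ``every distinct index repeated at least twice'' forces at most $r$ distinct indices, and that the number of index tuples realizing a given admissible partition with $m\le r$ blocks is at most $\binom{n}{m}m!\le n^{m}\le n^{r}$ for $n\ge 1$, with the total constant $B_r=\sum_{m\le r}(\text{number of partitions of }\{1,\dots,2r\}\text{ into }m\text{ blocks of size}\ge 2)$ depending only on $r$. Everything else---Markov's inequality, the H\"older estimate, and the reduction to integer $r$---is routine.
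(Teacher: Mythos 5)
The paper does not prove this lemma; immediately before stating it the authors attribute it to Athreya and Vidyashankar and cite their Lemma~3 from the 1995 multitype paper without reproducing an argument, so there is no in-paper proof to compare yours against. Your proof is correct, but its correctness for the lemma \emph{as stated} rests entirely on the Marcinkiewicz--Zygmund alternative you mention at the end: the chain
$$
E|S_n|^{2r}\le C_r\,E\Big(\Big(\sum_{i=1}^n X_i^2\Big)^{r}\Big)
\le C_r\, n^{r-1}\sum_{i=1}^n E(X_i^{2r})
= C_r\,E(X_1^{2r})\,n^{r},
$$
with the middle step by Jensen applied to the convex map $t\mapsto t^{r}$, followed by Markov at order $2r$, works for every real $r\ge 1$, which is exactly what is required. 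By contrast, the ``reduction to integer $r$'' you describe does not do what you claim: replacing $r$ by $\lfloor r\rfloor$ only yields $P(|\bar X_n|>\varepsilon)=O(n^{-\lfloor r\rfloor})$, which is strictly weaker than $O(n^{-r})$ whenever $r\notin\mathbb{N}$, so it does not ``keep $O(n^{-r})$.'' The fallback remark that ``for the applications in this paper $2r$ is an even integer'' is also inaccurate: Theorem~\ref{th3.2} invokes this lemma with $r=s$ under the hypothesis $E_i\big((\boldsymbol{1}\cdot Z_1)^{2s}\big)<\infty$ for some real $s\ge 1$, with no integrality assumption on $s$. Your multinomial expansion is a correct, self-contained proof when $r$ is a positive integer --- the counting over set partitions of $\{1,\dots,2r\}$ into blocks of size at least two and the H\"older/Lyapunov bound on each surviving term are both carried out correctly --- but you should present the Marcinkiewicz--Zygmund route as the main proof and the combinatorial expansion only as a supplementary, elementary argument for the integer case.
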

\noindent
\text{\em{Proof of Theorem~\ref{th3.2}}}\ \ \ From (\ref{3.10b}) and (\ref{3.10c}), we see $\phi(\emph{\textbf{j}},\varepsilon)\leq \nu_1+\nu_2,$ where
$$
\nu_1=P\bigg(\frac{\big|\sum_{r=1}^2\sum_{m=1}^{j_r}\boldsymbol{l}\cdot (Z_{1,m,r}-e_rM)\big|}{\textbf{1}\cdot \emph{\textbf{j}}}
>\frac{\varepsilon}{2}\Big|X_n=\emph{\textbf{j}}\bigg)$$
and
$$\nu_2=P\Big(\frac{|\boldsymbol{l}\cdot I_1|}{\textbf{1}\cdot \emph{\textbf{j}}}>\frac{\varepsilon}{2}\Big|X_n=\emph{\textbf{j}}\Big).
$$
Notice that $\sum\limits_{r=1}^2\sum\limits_{m=1}^{j_r}\boldsymbol{l}\cdot \big(Z_{1,m,r}-e_rM\big)$ is a sum of two random walks with mean 0,
 then by Lemma \ref{le3.1}, $\nu_1=O\big( (\textbf{1}\cdot \emph{\textbf{j}})^{-s}\big).$
However, by Markov's inequality, there is a constant $C_{13}(\kappa,\boldsymbol{l},\varepsilon)$ such that
 $$
 \nu_2 \leq C_{13}(\kappa,\boldsymbol{l},\varepsilon)(\textbf{1}\cdot \emph{\textbf{j}})^{-\kappa},
 $$
 This, together with $\nu_1=O\big( (\textbf{1}\cdot \emph{\textbf{j}})^{-s}\big)$, yields that there is a constant $C_{14}(s,\kappa,\boldsymbol{l},\varepsilon)$ such that
$$
\phi(\emph{\textbf{j}},\varepsilon)\frac{P_i(X_n=\emph{\textbf{j}})}{(h_0\gamma)^n}
\leq C_{14}(s,\kappa,\boldsymbol{l},\varepsilon)(\textbf{1}\cdot \emph{\textbf{j}})^{-d}\frac{P_i(X_n=\emph{\textbf{j}})}{(h_0\gamma)^n}.
$$
where $d=\min\{s,\kappa\}$. Summing over $\emph{\textbf{j}}$ simultaneously on both sides yields that
$$
(h_0\gamma)^{-n}P_i\left(\left|\frac{\boldsymbol{l}\cdot X_{n+1}}{\textbf{1}\cdot X_n}-\frac{l\cdot(X_nM)}{\textbf{1}\cdot X_n}\right|>\varepsilon\right)\leq C_{14}(s,\kappa,\boldsymbol{l},\varepsilon)
(h_0\gamma)^{-n}E_i\left((\textbf{1}\cdot X_n)^{-{d}}\right).
$$
By a slight modification of the dominated convergence theorem, to show (\ref{3.1}), it remains to prove that
$$
\lim_{n\rightarrow\infty}(h_0\gamma)^{-n}E_i\left((\textbf{1}\cdot X_n)^{-{d}}\right)
$$
 exists and is finite. Set $\Gamma(d)=\int_0^{\infty}e^{-t}t^{d-1}dt,$ $1\leq d<\infty$. Using the change of variables, for any positive r.v. $X$ we get $\Gamma(d)E(X^{-d})=\int_0^{\infty}E(e^{-tX})t^{d-1}dt.$
Put $X=\textbf{1}\cdot X_n$, then
$$
\Gamma(d)E_i((\textbf{1}\cdot X_n)^{-d})=\int_0^{\infty}g_n^{(i)}(e^{-t},e^{-t})t^{d-1}dt, \ \  i=1,2.
$$
By (\ref{2.1}), for each $0<t<\infty,$
$$
(h_0\gamma)^{-n}g_n^{(i)}(e^{-t},e^{-t})\rightarrow R_i(e^{-t},e^{-t}) \ \ \ as \ n\rightarrow\infty.
$$
Then, there is $0<C_{15}<\infty$ such that for $n$ large enough and $0<t<\infty$,
$$
(h_0\gamma)^{-n}g_n^{(i)}(e^{-t},e^{-t})\leq C_{15}R_i(e^{-t},e^{-t}).
$$
If we can show that $\int_0^{\infty}R_i(e^{-t},e^{-t})t^{d-1}dt<\infty,$
then by the dominated convergence theorem
$$
\lim_{n\rightarrow\infty}\int_0^{\infty}(h_0\gamma)^{-n}g_n^{(i)}(e^{-t},e^{-t})t^{d-1}dt
=\int_0^{\infty}R_i(e^{-t},e^{-t})t^{d-1}dt<\infty.
$$
Indeed, $\int_1^{\infty}R_i(e^{-t},e^{-t})t^{d-1}dt<\infty$
since $\Vert R(\emph{\textbf{s}})\Vert=O(\Vert \emph{\textbf{s}}\Vert)$
for $\Vert \emph{\textbf{s}}\Vert<1$.
Hence it remains to prove that $\int_0^1R_i(e^{-t},e^{-t})t^{d-1}dt<\infty.$
 Define
$L_n=\int_{\rho^{-n}}^{\rho^{-n+1}}R_i(e^{-t},e^{-t})t^{d-1}dt.$
Then, by (\ref{2.2}), for $i,j=1,2,$
\begin{displaymath}
 \begin{split}
L_n={} &\rho^{-n}\int_1^{\rho}R_i(e^{-x\rho^{-n}},e^{-x\rho^{-n}})(x\rho^{-n})^{d-1}dx\\
={} &\prod_{k=0}^{n-1}h(f_k(\emph{\textbf{s}})){(h_0\gamma\rho^d)^{-n}}\int_1^{\rho}
    R_i\big(f_n^{(j)}(e^{-x\rho^{-n}}\textbf{1})\big)x^{d-1}dx\\
\leq {} & {(h_0\gamma\rho^d)^{-n}}\int_1^{\rho}R_i\big(f_n^{(j)}(e^{-x\rho^{-n}}\textbf{1})\big)x^{d-1}dx.
 \end{split}
\end{displaymath}
 Notice that
$f_n^{(j)}(e^{-x\rho^{-n}}\textbf{1})=E_j\big(e^{-x(\textbf{1}\cdot Z_n)\rho^{-n}}\big)$, which converges to $E_j\big(e^{-x(\textbf{1}\cdot \boldsymbol{v} )W}\big)$ uniformly for $x$ in $[1,\rho]$ as $n\rightarrow\infty$ since $\frac{Z_n}{\rho^n}$ converges to $\boldsymbol{v}W$ a.s. as $n\rightarrow\infty$ (see \cite[Theorem V.6.1]{AKN72}). Under the hypothesis that $E_i(\textbf{1}\cdot Z_1)^{2s}<\infty$ for some $s\geq1$ and $i=1,2$, we obtain that $W$ is nontrivial by \cite[Theorem V.6.1]{AKN72}. Thus,
 $\sup\limits_{n\geq1}R_i(f_n^{(j)}(e^{-x\rho^{-n}}\textbf{1}))<\infty$
  for any $1\leq x\leq \rho$ and $j=1,2.$ Hence by $h_0\gamma\rho^d>1$,
  $$
  \int_0^1R_i(e^{-t},e^{-t})t^{d-1}dt=\sum_{n\geq1}L_n<\infty.
  $$
The proof of (\ref{3.1}) is complete. Now we turn to the proof of (\ref{3.2}). For given $X_{n-k_0}=\emph{\textbf{j}}$, from (\ref{3.10a}),
$$P\Big(\frac{\boldsymbol{l}\cdot X_n}{\textbf{1}\cdot X_n}-\frac{\boldsymbol{l}\cdot \boldsymbol{v}}{\textbf{1}\cdot \boldsymbol{v} }
    >\varepsilon\big|X_{n-{k_0}}=\emph{\textbf{j}}\Big)\leq \nu_3+\nu_4,
    $$
where
$$
\nu_3=P\bigg(\frac{\tilde{\boldsymbol{l}}\cdot\big[\sum_{r=1}^2\sum_{m=1}^{j_r}(Z_{k_0,m,r}-e_rM^{k_0})\big]}
    {\boldsymbol{u}\cdot\big(\emph{\textbf{j}}+\boldsymbol{\lambda}\rho^{-1}
    +\cdots+\boldsymbol{\lambda}\rho^{-k_0}\big)\rho^{k_0}}
    >\frac{\varepsilon(\textbf{1}\cdot \boldsymbol{v} )}{4}\bigg)
$$
and
$$
\nu_4=P\bigg(\frac{\sum_{i=1}^{k_0}\tilde{\boldsymbol{l}}\cdot\big(U_{k_0}^{(i)}-\boldsymbol{\lambda} M^{k_0-i}\big)}{\boldsymbol{u}\cdot\big(\emph{\textbf{j}}+\boldsymbol{\lambda}\rho^{-1}
    +\cdots+\boldsymbol{\lambda}\rho^{-k_0}\big)\rho^{k_0}}
  >\frac{\varepsilon(\textbf{1}\cdot \boldsymbol{v} )}{4}\bigg).
$$
Notice that $\sum_{r=1}^2\sum_{m=1}^{j_r}(Z_{k_0,m,r}-e_rM^{k_0})$ is also a sum of two random walks with mean 0,
then by Lemma \ref{le3.1},
$\nu_3=O\big( (\textbf{1}\cdot \emph{\textbf{j}})^{-s}\big)$.
By Markov's inequality,
 $\nu_4\leq C_{16}(s,\kappa,\boldsymbol{l},\epsilon)(\textbf{1}\cdot \emph{\textbf{j}})^{-\kappa}.$ It follows that
$$
\hat{\phi}(\emph{\textbf{j}},\varepsilon)\frac{P_i(X_{n-k_0}=\emph{\textbf{j}})}{(h_0\gamma)^n}
=O\big((\textbf{1}\cdot \emph{\textbf{j}})^{-d}\big)\frac{P_i(X_{n-k_0}=\emph{\textbf{j}})}{(h_0\gamma)^{n}},
$$
where $d=\min\{s,\kappa\}$. Summing over $\emph{\textbf{j}}$ simultaneously on both sides yields that
$$
(h_0\gamma)^{-n}P_i\Big(\frac{\boldsymbol{l}\cdot X_n}{\textbf{1}\cdot X_n}-\frac{\boldsymbol{l}\cdot \boldsymbol{v} }{\textbf{1}\cdot \boldsymbol{v} }>\varepsilon\Big)=(h_0\gamma)^{-n}O\big(E_i(\textbf{1}\cdot X_{n-{k_0}})^{-d}\big).
$$
 Similarly, we can show that
$\lim\limits_{n\rightarrow\infty}(h_0\gamma)^{-(n-k_0)}E_i\big((\textbf{1}\cdot X_{n-k_0})^{-d}\big)$
 exists and is finite. Therefore, (\ref{3.2}) follows from a slight modification of the dominated convergence theorem.
 The proof is complete.\hfill $\Box$
\par
Turning to the proof of Theorem~\ref{th3.3}, we again start with a proposition. The following proposition extends \cite[Theorem 3]{AV95} and is a technical result needed for Theorem~\ref{th3.3}.
\par
\noindent
\begin{proposition}\label{pro2.2} Assume that (\ref{2.7}) and (\ref{2.8}) hold, then for $\boldsymbol{s}\in(0,1]^2$
 \begin{equation}\label{2.9}
 \lim_{n\rightarrow\infty}\frac{\log g_n^{(i)}(\textbf{s})}{k_i^n}= G_i(\textbf{s})
  \end{equation}
exists and is the solution of the vector functional equations
\begin{equation}\label{2.10}
G_i(f(\textbf{s}))=k_iG_i(\textbf{s}),\ \
\lim_{\textbf{s}\downarrow\emph{\textbf{0}}}G_i(\textbf{s})=-\infty
\end{equation}
subject to
$$
 G_i(\emph{\textbf{1}})=0,\ \ \ \  -\infty <G_i(\textbf{s})< 0\ \ \
 for \ \boldsymbol{s}\in(0,1]^2\setminus\{\boldsymbol{1}\}.
$$
\end{proposition}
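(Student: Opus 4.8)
I would start from identity~(\ref{1.2}): it gives $g_n^{(i)}(\boldsymbol{s})=f_n^{(i)}(\boldsymbol{s})\prod_{m=0}^{n-1}h(f_m(\boldsymbol{s}))$ and, since $f_m(f(\boldsymbol{s}))=f_{m+1}(\boldsymbol{s})$, the recursion $g_{n+1}^{(i)}(\boldsymbol{s})=h(\boldsymbol{s})\,g_n^{(i)}(f(\boldsymbol{s}))$; taking logarithms,
$$\log g_n^{(i)}(\boldsymbol{s})=\log f_n^{(i)}(\boldsymbol{s})+\sum_{m=0}^{n-1}\log h(f_m(\boldsymbol{s})).$$
Since $g_n^{(i)}(\boldsymbol{1})=1$, the case $\boldsymbol{s}=\boldsymbol{1}$ is immediate ($G_i(\boldsymbol{1})=0$), so I fix $\boldsymbol{s}\in(0,1]^2\setminus\{\boldsymbol{1}\}$; then every factor above is positive and $\le1$, $f_m(\boldsymbol{s})$ converges to $\boldsymbol{0}$ as $m\to\infty$ (cf.\ \cite[Lemma~2]{AV95}), and the plan is to divide by $k_i^n$ and show that each of the two terms converges.

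For the first term I would invoke \cite[Theorem~3]{AV95}, the non-immigration case that this proposition extends, whose hypotheses are exactly the offspring-law parts of (\ref{2.7})--(\ref{2.8}): it gives $k_i^{-n}\log f_n^{(i)}(\boldsymbol{s})\to Q_i(\boldsymbol{s})$, where $Q_i$ solves $Q_i(f(\boldsymbol{s}))=k_iQ_i(\boldsymbol{s})$, $Q_i(\boldsymbol{1})=0$, $-\infty<Q_i<0$ on $(0,1]^2\setminus\{\boldsymbol{1}\}$, and $Q_i(\boldsymbol{s})\to-\infty$ as $\boldsymbol{s}\downarrow\boldsymbol{0}$; writing $\beta_r:=-Q_r(\boldsymbol{s})>0$ this also gives $-\log f_m^{(r)}(\boldsymbol{s})=k_r^m(\beta_r+o(1))$ for $r=1,2$.

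The genuinely new work is the second term. Setting $\boldsymbol{a}_m:=\big(-\log f_m^{(1)}(\boldsymbol{s}),-\log f_m^{(2)}(\boldsymbol{s})\big)\in\mathbb{R}_{+}^2$, so that $h(f_m(\boldsymbol{s}))=E\big[e^{-\boldsymbol{a}_m\cdot I_1}\big]=\sum_{\boldsymbol{j}}h_{\boldsymbol{j}}e^{-\boldsymbol{a}_m\cdot\boldsymbol{j}}$, I would derive the Laplace-type sandwich
$$\min_{\boldsymbol{j}:\,h_{\boldsymbol{j}}>0}\boldsymbol{a}_m\cdot\boldsymbol{j}\ \le\ -\log h(f_m(\boldsymbol{s}))\ \le\ \min_{\boldsymbol{j}:\,h_{\boldsymbol{j}}>0}\boldsymbol{a}_m\cdot\boldsymbol{j}+C$$
(lower bound: keep only the largest term; upper bound: $e^{-\boldsymbol{a}_m\cdot\boldsymbol{j}}\le e^{-\min\boldsymbol{a}_m\cdot\boldsymbol{j}}$ together with $\sum_{\boldsymbol{j}}h_{\boldsymbol{j}}=1$). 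Here $C<\infty$ because (\ref{2.7})--(\ref{2.8}) force $h_{(d_1,0)}>0$ and $h_{(0,d_2)}>0$, so any minimizing $\boldsymbol{j}$ has $j_1\le d_1$, $j_2\le d_2$, leaving only finitely many candidates and $C=\max_{\boldsymbol{j}}(-\log h_{\boldsymbol{j}})$ over them. Dividing by $k_i^m$, for each candidate $k_i^{-m}\boldsymbol{a}_m\cdot\boldsymbol{j}=j_1(k_1/k_i)^m(\beta_1+o(1))+j_2(k_2/k_i)^m(\beta_2+o(1))$ converges in $[0,\infty]$, and at least one candidate — $(d_1,0)$ if $k_1\le k_i$, otherwise $(0,d_2)$ — has a finite limit; discarding the (finitely many) candidates whose rescaled value diverges and using that the minimum of finitely many convergent sequences converges, $k_i^{-m}\min_{\boldsymbol{j}}\boldsymbol{a}_m\cdot\boldsymbol{j}$ tends to a finite limit $-\ell_i(\boldsymbol{s})\ge0$, whence $k_i^{-m}\log h(f_m(\boldsymbol{s}))\to\ell_i(\boldsymbol{s})\in(-\infty,0]$. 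A Tannery (dominated convergence for series) argument then yields
$$\frac{1}{k_i^n}\sum_{m=0}^{n-1}\log h(f_m(\boldsymbol{s}))=\sum_{l=1}^{n}k_i^{-l}\,\frac{\log h(f_{n-l}(\boldsymbol{s}))}{k_i^{\,n-l}}\ \longrightarrow\ \frac{\ell_i(\boldsymbol{s})}{k_i-1}\qquad(n\to\infty).$$

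Putting the two terms together gives (\ref{2.9}) with $G_i(\boldsymbol{s})=Q_i(\boldsymbol{s})+\ell_i(\boldsymbol{s})/(k_i-1)$, which is finite, is $<0$ (because $Q_i<0$ and $\ell_i\le0$), satisfies $G_i(\boldsymbol{1})=0$, and tends to $-\infty$ as $\boldsymbol{s}\downarrow\boldsymbol{0}$ (because $Q_i\to-\infty$ and $\ell_i\le0$). For the functional equation I would divide $\log g_{n+1}^{(i)}(\boldsymbol{s})=\log h(\boldsymbol{s})+\log g_n^{(i)}(f(\boldsymbol{s}))$ by $k_i^{n+1}$ and let $n\to\infty$ (with $k_i^{-(n+1)}\log h(\boldsymbol{s})\to0$, since $0<h(\boldsymbol{s})\le1$), obtaining $G_i(\boldsymbol{s})=k_i^{-1}G_i(f(\boldsymbol{s}))$, i.e.\ (\ref{2.10}); uniqueness of the solution subject to the side conditions I would argue exactly as in Proposition~\ref{pro2.1} (equivalently, \cite[Theorem~3]{AV95}). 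The hard part will be the convergence of $k_i^{-m}\log h(f_m(\boldsymbol{s}))$: it requires the Laplace analysis of $h$ along the doubly-exponentially small orbit $f_m(\boldsymbol{s})$, and in particular the recognition that which immigration configuration controls the decay rate — hence whether $\ell_i(\boldsymbol{s})$ is zero or strictly negative — depends on how $k_1$ and $k_2$ compare.
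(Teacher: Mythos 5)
Your decomposition $\log g_n^{(i)}(\boldsymbol{s})=\log f_n^{(i)}(\boldsymbol{s})+\sum_{m=0}^{n-1}\log h(f_m(\boldsymbol{s}))$ is the same starting point as the paper, and the concluding steps (the recursion giving $G_i(f(\boldsymbol{s}))=k_iG_i(\boldsymbol{s})$, $G_i(\boldsymbol{1})=0$, and the pinch $G_i\le Q_i\to-\infty$ as $\boldsymbol{s}\downarrow\boldsymbol{0}$) agree as well. But your treatment of the two terms differs. For $\log f_n^{(i)}/k_i^n$ the paper does not cite AV95; it reproves the convergence via the telescoped product $L_n=(f_n^{(1)})^{k_1^{-n}}$, because \cite[Theorem~3]{AV95} is stated only for the doubling case $k_i=2$ — your citation glosses over the extension to general $k_i$, which is routine (the paper's argument is exactly that extension) but should be acknowledged rather than attributed to the reference. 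For $\sum_m\log h(f_m)$, the paper's method is structurally different from yours: using the hypothesis $P(I_1^{(1)}<d_1)=0$ it factors $h(\boldsymbol{s})=s_1^{d_1}p^{\ast}\left[1+\sigma_1c(s_2)+\sigma_2 d(s_1,s_2)\right]$, substitutes the closed form for $\log f_m^{(1)}$, and shows $\eta_{n2}\to d_1H_1(\boldsymbol{s})/(k_1-1)$ by a Cesàro-type argument; you instead perform a Laplace/min-over-support analysis of $-\log h(f_m(\boldsymbol{s}))$ (sandwiching by $\min_{\boldsymbol{j}}\boldsymbol{a}_m\cdot\boldsymbol{j}$) and then apply Tannery. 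Both use a geometric Cesàro step, but your argument is genuinely more flexible: it tracks precisely how the limiting constant depends on the comparison of $k_1$ and $k_2$, and it does not require the immigration support to have the ``rectangular'' form $j_1\ge d_1$ forced by the paper's factorization (a requirement that in fact sits uneasily with condition~(\ref{2.7}), which demands $(0,d_2)$ be in the support). In exchange, you arrive at a less explicit expression $G_i=Q_i+\ell_i/(k_i-1)$ with $\ell_i$ a case-dependent minimum, whereas the paper writes $G_1$ directly in terms of $H_1$ and $d_1$ alone. Minor point: the parenthetical labels in your Laplace sandwich are swapped (``keep only the largest term'' gives the \emph{upper} bound on $-\log h$, the normalization $\sum_{\boldsymbol{j}}h_{\boldsymbol{j}}=1$ gives the \emph{lower}), though the sandwich itself is correct.
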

\begin{proof}\ According to (\ref{2.7}) and (\ref{2.8}), $k_1, k_2, d_1, d_2$ are well defined. They are finite and could not be equal to 0. Note that for $\boldsymbol{s}\in(0,1]^2$
\begin{displaymath}
 \begin{split}
f_1^{(1)}(s_1,s_2)={} & s_1^{k_1}P_1(k_1,0)+\sum_{j\geq1}s_1^{k_1}s_2^jP_1(k_1,j)
+\sum_{i>k_1}\sum_{j\geq0}s_1^is_2^jP_1(i,j)\\
={}& s_1^{k_1}p\left[1+\delta_1 a(s_2)+\delta_2 b(s_1,s_2)\right],
 \end{split}
\end{displaymath}
where
$$
p=P_1(k_1,0),\  \ \delta_1=\frac{\alpha_1}{p}, \ \ \delta_2=\frac{\alpha_2}{p},\ \
\alpha_1=\sum_{j\geq1}P_1(k_1,j),
$$
$$
\alpha_2=1-p-\alpha_1,\ \
 a(s_2)=\sum_{j\geq1}\frac{P_1(k_1,j)}{\alpha_1}s_2^j,\ \
 b(s_1,s_2)=\sum_{i>k_1}\sum_{j\geq0}\frac{P_1(i,j)}{\alpha_2}s_1^{i-k_1}s_2^j.
$$
Indeed, $a(s_2)$ and $b(s_1,s_2)$ are probability generating functions. Thus
$$
f_{n+1}^{(1)}(s_1,s_2)=\left(f_n^{(1)}(s_1,s_2)\right)^{k_1}p\left[1+\delta_1 a(f_n^{(2)}(s_1,s_2))
  +\delta_2 b(f_n(s_1,s_2))\right].
$$
 Let $L_n(s_1,s_2)=\big(f_n^{(1)}(s_1,s_2)\big)^{k_1^{-n}}$.
 Then
 \begin{equation}\label{2.11}
 L_{n+1}(s_1,s_2)=L_n(s_1,s_2)\big(F_n(s_1,s_2)\big)^{k_1^{-(n+1)}},
 \end{equation}
where
\begin{equation}\label{2.12}
F_n(s_1,s_2)=p\left[1+\delta_1 a(f_n^{(2)}(s_1,s_2))+\delta_2 b(f_n(s_1,s_2))\right].
\end{equation}
Iterating (\ref{2.11}) yields that
$$
L_n(s_1,s_2)=s_1\prod_{j=0}^{n-1}\big(F_j(s_1,s_2)\big)^{k_1^{-(j+1)}}.
$$
This implies that
 \begin{equation}\label{2.13}
 f_n^{(1)}(s_1,s_2)=s_1^{k_1^n}\Big(\prod_{j=0}^{n-1}\big(F_j(s_1,s_2)\big)^{k_1^{-(j+1)}}\Big)^{k_1^n}
\end{equation}
 and
\begin{equation}\label{2.14}
  \frac{\log f_n^{(1)}(\emph{\textbf{s}})}{k_1^n}=\log s_1+\sum_{j=0}^{n-1}k_1^{-(j+1)}\log(F_j(s_1,s_2)).
 \end{equation}
Notice that $\sum_{j=0}^{n}k_1^{-(j+1)}\log(F_j(s_1,s_2))$ is a Cauchy sequence since
\begin{displaymath}
 \begin{split}
\big|\log F_j(s_1,s_2)\big|\leq{} & |\log p|+\log\left[1+\delta_1 a(f_j^{(2)}(s_1,s_2))+\delta_2 b(f_j(s_1,s_2))\right]\\
\leq{} & |\log p|+\log(1+\delta_1+\delta_2)<\infty.
 \end{split}
\end{displaymath}
Therefore (\ref{2.14}) converges uniformly for $\emph{\textbf{s}}$ in $[0,1]^2$
 as $n\rightarrow\infty$, we denote the limit by $H_1(\emph{\textbf{s}})$. Note that for $\boldsymbol{s}\in(0,1]^2$
 \begin{displaymath}
 \begin{split}
h(s_1,s_2)={} &s_1^{d_1}P\big(I_1=(d_1,0)\big)+\sum_{j\geq1}s_1^{d_1}s_2^jP\big(I_1=(d_1,j)\big)
+\sum_{i>d_1}\sum_{j\geq0}s_1^is_2^jP(I_1=(i,j))\\
={} & s_1^{d_1}p^{\ast}\left(1+\sigma_1 c(s_2)+\sigma_2 d(s_1,s_2)\right),
 \end{split}
\end{displaymath}
where
$$
p^{\ast}=P(I_1=(d_1,0)),\ \sigma_1=\frac{\beta_1}{p^\ast},\  \sigma_2=\frac{\beta_2}{p^\ast},\
 \beta_1=\sum_{j\geq1}P\big(I_1=(d_1,j)\big),
$$
$$
\beta_2=1-p^{\ast}-\beta_1, \ c(s_2)=\sum_{j\geq1}\frac{P\big(I_1=(d_1,j)\big)}{\beta_1}s_2^j,\ \ d(s_1,s_2)=\sum_{i>d_1}\sum_{j\geq0}\frac{P\big(I_1=(i,j)\big)}{\beta_2}s_1^{i-d_1}s_2^j.
$$
Notice that $c(s_2)$ and $d(s_1,s_2)$ are generating functions. From (\ref{1.2}) and (\ref{2.13}),
we obtain
 \begin{displaymath}
 \begin{split}
& g_n^{(1)}(s_1,s_2)\\={} & f_n^{(1)}(s_1,s_2)\prod_{r=0}^{n-1}h\left(f_r^{(1)}(s_1,s_2),
f_r^{(2)}(s_1,s_2)\right)\\
={} & f_n^{(1)}(s_1,s_2)\prod_{r=0}^{n-1}(f_r^{(1)}(\emph{\textbf{s}}))^{d_1}p^{\ast}
\left(1+\sigma_1 c(f_r^{(2)}(\emph{\textbf{s}}))+\sigma_2 d\left(f_r(\emph{\textbf{s}})\right)\right)\\
= {} & f_n^{(1)}(s_1,s_2)\prod_{r=0}^{n-1}\Big[s_1^{k_1^r}
\Big(\prod_{j=0}^{r-1}(F_j(s_1,s_2))^{k_1^{-(j+1)}}\Big)^{k_1^r}\Big]^{d_1}
p^{\ast}\Big(1+\sigma_1 c(f_r^{(2)}(\emph{\textbf{s}}))+\sigma_2 d\big(f_r(\emph{\textbf{s}})\big)\Big).
 \end{split}
\end{displaymath}
 Therefore we get
 \begin{displaymath}
 \begin{split}
 \frac{\log g_n^{(1)}(\emph{\textbf{s}})}{k_1^n}= {} & \frac{\log f_n^{(1)}(\emph{\textbf{s}})}{k_1^n}
 +\frac{d_1\left(\sum_{r=0}^{n-1}k_1^r\left(\log s_1+\sum_{j=0}^{r-1}\frac{1}{k_1^{j+1}}
 \log F_j(s_1,s_2)\right)\right)}{k_1^n}\\
 {} &+\frac{\sum_{r=0}^{n-1}\log p^{\ast}\left(1+\sigma_1 c(f_r^{(2)}(\emph{\textbf{s}}))
 +\sigma_2 d(f_r(\emph{\textbf{s}}))\right)}{k_1^n}\\
 \triangleq {} & \eta_{n1}+\eta_{n2}+\eta_{n3}.
\end{split}
\end{displaymath}
 Note that $\eta_{n1}$ converges to $H_1(\emph{\textbf{s}})$
 and $\eta_{n3}$ converges to 0 as $n\rightarrow\infty$. However,
  \begin{displaymath}
 \begin{split}
 \eta_{n2}={} &\frac{d_1\sum_{r=0}^{n-1}k_1^r\log s_1}{k_1^n}+\frac{d_1\sum_{r=0}^{n-1}k_1^r
 \sum_{j=0}^{r-1}\frac{1}{k_1^{j+1}}\log F_j(s_1,s_2)}{k_1^n},\\
\end{split}
\end{displaymath}
 the second term of the right side converges since $\sum_{j=0}^nk_1^{-(j+1)}\log(F_j(s_1,s_2))$
 is a cauchy sequence and the first converges obviously. Therefore we obtain (\ref{2.9}). From (\ref{2.15}),
$g_{n+1}^{(1)}(\emph{\textbf{s}})=h(\emph{\textbf{s}})g_n^{(1)}(f(\emph{\textbf{s}}))$ and thus
$$
\frac{\log g_{n+1}^{(1)}(\emph{\textbf{s}})}{k_1^{n+1}}=\frac{\log h(\emph{\textbf{s}})
+\log g_n^{(1)}(f(\emph{\textbf{s}}))}{k_1^{n+1}}.
$$
 Letting $n\rightarrow\infty$ yields $G_1(f(\boldsymbol{s}))=k_1G_1(\emph{\textbf{s}})$. Obviously, $G_1(\boldsymbol{1})=0$. For $\boldsymbol{s}\in(0,1]^2\setminus\{\boldsymbol{1}\}$, $0< g_n^{(1)}(\boldsymbol{s})<1$ and thus $-\infty<G_i(\boldsymbol{s})<0$. We can find $\emph{\textbf{s}}_n\downarrow \boldsymbol{0}$ as $n\uparrow\infty$ such that $f(\emph{\textbf{s}}_n)\downarrow \boldsymbol{0}$. Note that $G_1(f(\emph{\textbf{s}}_n))=k_1G_1(\emph{\textbf{s}}_n)$. Letting $n\rightarrow\infty$ gives that
$\lim_{\emph{\textbf{s}}\downarrow\textbf{0}}G_1(\emph{\textbf{s}})=-\infty$.
 Similar arguments give the result for $i=2$, and are therefore omitted.
\hfill $\Box$
\end{proof}
\begin{remark}
In the special case $h_0=1$ and $k_i=2$ for $i=1,2$, $g_n^{(i)}(\boldsymbol{s})=f_n^{(i)}(\boldsymbol{s})$ and $G_i(\boldsymbol{s})=\lim\limits_{n\rightarrow\infty}\frac{\log f_n^{(i)}(\boldsymbol{s})}{2^n}$, which has been studied by Athreya and Vidyashankar (see \cite{AV95} or \cite{AV97}).
\end{remark}
\noindent
\text{\em{Proof of Theorem~\ref{th3.3}}}\ \ \ \ We only show the case $i=1$. Similar discussion can be conducted for the case $i=2$. It is known from (\ref{3.3}) and (\ref{3.7})
  that there are $0<s_{\varepsilon}<1$,
$0<s_{\varepsilon}^{\ast}<1$ and constants $C_1(\varepsilon)$, $C_2(\varepsilon)$ such that
$$
P_1\Big(\Big|\frac{\boldsymbol{l}\cdot X_{n+1}}{\textbf{1}\cdot X_n}-\frac{\boldsymbol{l}\cdot(X_nM)}{\textbf{1}\cdot X_n}\Big|>\varepsilon\Big)
\leq C_1(\varepsilon)\sum_{j\in \mathbb{N}^2 }s_{\varepsilon}^{\textbf{1}\cdot \emph{\textbf{j}}}P_1(X_n=\emph{\textbf{j}})\\
= C_1(\varepsilon)g_n^{(1)}(s_{\varepsilon}\textbf{1}),
$$
 and
 $$P_1\Big(\Big|\frac{\boldsymbol{l}\cdot X_n}{\textbf{1}\cdot X_n}-\frac{\boldsymbol{l}\cdot \boldsymbol{v} }{\textbf{1}\cdot \boldsymbol{v} }\Big|>\varepsilon\Big)\leq C_2(\varepsilon)g_{n-k_0}^{(1)}(s_{\varepsilon}^{\ast}\textbf{1}).$$
 Then (\ref{3.13}) and (\ref{3.14}) follow from Proposition~\ref{pro2.2}.
 The proof is complete. \hfill $\Box$

 To prove Theorem~\ref{th3.4}, we shall use the following two results.
\par
\noindent
\begin{proposition}\label{pro2.3} Let $\boldsymbol{u}$ be the right eigenvector of $M$ corresponding to
its maximal eigenvalue $\rho$ $(\rho>1)$ and $E(I_n)=\boldsymbol{\lambda}=(\lambda_1, \lambda_2).$
Assume that $\lambda_i<\infty$, $m_{ij}<\infty$ for $i,j=1,2.$
Define $\mathcal{F}_n=\sigma (X_0,\cdots, X_n)$ and
\begin{equation}\label{2.16}
Y_n=\rho^{-n}\Big[\boldsymbol{u}\cdot X_n-\frac{\rho^{n+1}-1}{\rho-1}(\boldsymbol{u}\cdot \boldsymbol{\lambda})\Big].
\end{equation}
 Then $\{(Y_n,\mathcal{F}_n);n\geq0\}$ is a martingale
 and converges to a $r.v.$ $Y$ w.p.1 as $n\rightarrow\infty.$
\end{proposition}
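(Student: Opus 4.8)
\medskip
\noindent\textit{Proof proposal.} The plan is to verify the martingale identity by a one‑step conditional expectation, and then to obtain almost sure convergence by shifting $Y_n$ by a suitable constant so that it becomes a nonnegative martingale.

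First I would check adaptedness and integrability: each $Y_n$ is $\mathcal{F}_n$‑measurable, and since $m_{ij}<\infty$ and $\lambda_i<\infty$ one gets $E_i(X_{n+1})=E_i(X_n)M+\boldsymbol{\lambda}$ by induction, so $E_i(\boldsymbol{u}\cdot X_n)<\infty$ and $Y_n\in L^1$. For the conditional expectation I would use the branching‑plus‑immigration decomposition
\[
X_{n+1}=\sum_{r=1}^{2}\sum_{m=1}^{X_n^{(r)}}Z_{1,m,r}+I_{n+1},
\]
where, given $\mathcal{F}_n$, the offspring vectors $Z_{1,m,r}$ (the family at time $1$ of a type‑$r$ particle) and the immigration $I_{n+1}$ are independent of $\mathcal{F}_n$ with $E(Z_{1,m,r})=e_rM$ and $E(I_{n+1})=\boldsymbol{\lambda}$. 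Summing gives $E(X_{n+1}\mid\mathcal{F}_n)=X_nM+\boldsymbol{\lambda}$, and since $\boldsymbol{u}$ is the right eigenvector of $M$ for $\rho$ we have $\boldsymbol{u}\cdot(X_nM)=\rho(\boldsymbol{u}\cdot X_n)$, hence $E(\boldsymbol{u}\cdot X_{n+1}\mid\mathcal{F}_n)=\rho(\boldsymbol{u}\cdot X_n)+\boldsymbol{u}\cdot\boldsymbol{\lambda}$. Substituting this into the definition of $Y_{n+1}$ and using the elementary identity $1-\frac{\rho^{n+2}-1}{\rho-1}=-\rho\,\frac{\rho^{n+1}-1}{\rho-1}$ yields, after a short computation with the geometric sum, $E(Y_{n+1}\mid\mathcal{F}_n)=Y_n$, so $\{(Y_n,\mathcal{F}_n);n\ge0\}$ is a martingale.

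For the convergence I would rewrite $Y_n=\rho^{-n}(\boldsymbol{u}\cdot X_n)-a_n$, where $a_n:=\rho^{-n}\frac{\rho^{n+1}-1}{\rho-1}(\boldsymbol{u}\cdot\boldsymbol{\lambda})=\frac{\rho-\rho^{-n}}{\rho-1}(\boldsymbol{u}\cdot\boldsymbol{\lambda})$ is a deterministic sequence that increases to $C:=\frac{\rho}{\rho-1}(\boldsymbol{u}\cdot\boldsymbol{\lambda})<\infty$. Since $\rho^{-n}(\boldsymbol{u}\cdot X_n)\ge0$, it follows that $Y_n+C=\rho^{-n}(\boldsymbol{u}\cdot X_n)+(C-a_n)\ge0$ for every $n$, so $\{Y_n+C\}$ is a nonnegative martingale and hence converges w.p.1 to a finite limit by the martingale convergence theorem; consequently $Y_n\to Y$ w.p.1 for some random variable $Y$.

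The computation is essentially routine; the only point needing a little care is the bookkeeping with the deterministic correction $a_n$ — one must observe that it is bounded (indeed monotone increasing) so that a single additive constant $C$ turns the signed martingale $Y_n$ into a genuinely nonnegative one. Everything else reduces to the conditional‑expectation identity above, whose only ingredients are the eigenvector relation $\boldsymbol{u}\cdot(X_nM)=\rho(\boldsymbol{u}\cdot X_n)$ and the independence of the fresh offspring and immigration from $\mathcal{F}_n$.
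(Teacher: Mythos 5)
Your proof is correct. The martingale identity is established exactly as in the paper: you use the branching-plus-immigration decomposition to get $E(X_{n+1}\mid\mathcal{F}_n)=X_nM+\boldsymbol{\lambda}$, then the eigenvector relation $\boldsymbol{u}\cdot(X_nM)=\rho(\boldsymbol{u}\cdot X_n)$ and the geometric-sum identity to obtain $E(Y_{n+1}\mid\mathcal{F}_n)=Y_n$. The only place you depart from the paper's argument is the almost sure convergence step. The paper proves $\sup_n E_i|Y_n|<\infty$ directly, by writing $Y_n=\rho^{-n}(\boldsymbol{u}\cdot X_n)-(\boldsymbol{u}\cdot\boldsymbol{\lambda})\sum_{l=0}^n\rho^{-l}$, using constancy of $E_i(Y_n)$ to bound $E_i(\rho^{-n}\boldsymbol{u}\cdot X_n)$ uniformly in $n$, and then invoking the $L^1$-bounded martingale convergence theorem. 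You instead add the constant $C=\frac{\rho}{\rho-1}(\boldsymbol{u}\cdot\boldsymbol{\lambda})$, observe that the deterministic correction $a_n$ increases to $C$ so that $Y_n+C\ge 0$, and invoke the nonnegative-martingale convergence theorem. These two routes are essentially equivalent (nonnegativity of the shifted martingale implies $L^1$-boundedness since its $L^1$-norm equals its constant expectation), but your presentation is a bit slicker: it avoids the explicit triangle-inequality bookkeeping for $E_i|Y_n|$, at the cost of needing to notice that the deterministic sequence is bounded above by its limit. Either way, the proof is complete and the key observations match the paper's.
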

\begin{proof}\ From (\ref{2.15}),
 we obtain that
$ E_i(\boldsymbol{u} \cdot X_{n+1}|X_n)=\boldsymbol{u} \cdot (X_nM)+\boldsymbol{u} \cdot \boldsymbol{\lambda}$
 and thus
 \begin{displaymath}
 \begin{split}
 E_i[Y_{n+1}|\mathcal{F}_n]={} & E_i\Big[\rho^{-(n+1)}\Big( \boldsymbol{u} \cdot X_{n+1}-\frac{\rho^{n+2}-1}{\rho-1}
 (\boldsymbol{u}\cdot \boldsymbol{\lambda})\Big)\Big|X_n\Big]\\
               ={} & \rho^{-n-1}\Big[\boldsymbol{u} \cdot ( X_nM)+ \boldsymbol{u}\cdot \boldsymbol{\lambda}
                     -\frac{\rho^{n+2}-1}{\rho-1}( \boldsymbol{u}\cdot \boldsymbol{\lambda})\Big]\\
               ={} & \rho^{-n}\Big[\boldsymbol{u}\cdot X_n-\frac{\rho^{n+1}-1}{\rho-1}
                     (\boldsymbol{u}\cdot \boldsymbol{\lambda} )\Big]\\
               ={} & Y_n,\\
 \end{split}
\end{displaymath}
where we have used $\boldsymbol{u} \cdot ( X_nM)=X_n(M{\boldsymbol{u}}^\top)$ and ${\boldsymbol{u}}^\top$ is the transpose of $\boldsymbol{u}.$
Hence $\{(Y_n,\mathcal{F}_n);n\geq0\}$ is a martingale. Since $\lambda_i<\infty$, $m_{ij}<\infty$, then $E_i(Y_n)=E_i(Y_1)<\infty$. By the definition of $Y_n$, one has
$$
Y_n=\frac{\boldsymbol{u}\cdot X_n}{\rho^n}-(\boldsymbol{u}\cdot \boldsymbol{\lambda})\sum_{l=0}^n\rho^{-l},
$$
where $\sum\limits_{l=0}^{\infty}\rho^{-l}<\infty$ since $\rho>1$. This, together with $\sup\limits_{n}E_i(Y_n)=E_i(Y_1)<\infty$ implies that $\sup\limits_{n}E_i\left(\frac{\boldsymbol{u}\cdot X_n}{\rho^n}\right)<\infty$. Thus, $\sup\limits_{n}E_i[|Y_n|]<\infty$. By the martingale convergence theorem (see \cite[Theorem 7.11]{KFC}), $Y_n$ converges to a $r.v.$ $Y$ w.p.1 as $n\rightarrow\infty.$\hfill $\Box$
\end{proof}

\par
The following lemma is proved in the no-immigration case (see \cite[Lemma 2.2]{AV97}). We refer the reader to \cite{AV97} for a proof.
\par
\noindent
\begin{lemma} \label{le3.2}
 Suppose that $E_i\big(e^{\theta_0(\emph{\textbf{1}}\cdot Z_1)}\big)<\infty$ for some $\theta_0>0$ and $i=1,2$. Let $\boldsymbol{u}=(u_1, u_2)$ be the right eigenvector of eigenvalue $\rho$. For $\theta\leq\theta_0$, define
 $$
 \Phi_i(\theta)=E_i\left(e^{\theta\left(\boldsymbol{u}\cdot Z_1-u_r\rho\right)}\right) \ \ for \  i=1,2.
 $$
 Then, there exists a constant $0<C_{11}<\infty$ such that
$$
\Phi_i(\theta)\leq 1+C_{11}\theta^2 \ \ for  \  i=1,2.
$$
\end{lemma}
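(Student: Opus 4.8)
The plan is to recognise $\Phi_i$ as the moment generating function of a centred random variable that possesses a finite right‑hand exponential moment, and then to Taylor‑expand it to second order at $\theta=0$; the quadratic estimate then follows because the first derivative of $\Phi_i$ vanishes at $0$ while its second derivative is locally bounded. First I would fix $i\in\{1,2\}$ and write $W_i:=\boldsymbol{u}\cdot Z_1-\rho u_i$ for the variable in the exponent defining $\Phi_i$. Under $P_i$ one has $E_i(Z_1)=e_iM$, hence, using $M\boldsymbol{u}^{\top}=\rho\boldsymbol{u}^{\top}$, $E_i(\boldsymbol{u}\cdot Z_1)=\boldsymbol{u}\cdot(e_iM)=e_i(M\boldsymbol{u}^{\top})=\rho\,e_i\boldsymbol{u}^{\top}=\rho u_i$, so $E_i(W_i)=0$. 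Two elementary facts about $W_i$ do the work: since $\boldsymbol{u}\cdot Z_1\ge 0$ the variable $W_i$ is bounded below by $-\rho u_i$; and since $u_1+u_2=1$ with $u_1,u_2>0$ we have $\boldsymbol{u}\cdot Z_1\le\Vert\boldsymbol{u}\Vert\,(\textbf{1}\cdot Z_1)\le\textbf{1}\cdot Z_1$, so that $\Phi_i(\theta)=e^{-\theta\rho u_i}E_i(e^{\theta(\boldsymbol{u}\cdot Z_1)})\le e^{-\theta\rho u_i}E_i(e^{\theta(\textbf{1}\cdot Z_1)})<\infty$ for $0\le\theta\le\theta_0$ by the hypothesis, finiteness for $\theta\le 0$ being trivial from the lower bound on $W_i$.

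Next I would fix any $\theta_1\in(0,\theta_0)$ and show that $\Phi_i$ is twice continuously differentiable on $[-\theta_1,\theta_1]$, with $\Phi_i'(\theta)=E_i(W_ie^{\theta W_i})$ and $\Phi_i''(\theta)=E_i(W_i^2e^{\theta W_i})$. This is differentiation under the expectation sign, legitimate by dominated convergence: for $|\theta|\le\theta_1$ and $k=1,2$ the integrand $|W_i|^ke^{\theta W_i}$ is bounded by a constant on the event $\{W_i<0\}$ (there $|W_i|\le\rho u_i$ and $e^{\theta W_i}\le e^{\theta_1\rho u_i}$) and by $C_ke^{\theta_2 W_i}$ on $\{W_i\ge 0\}$ for any fixed $\theta_2\in(\theta_1,\theta_0)$ and a suitable constant $C_k$ (a polynomial times $e^{\theta_1 x}$ is dominated by $C_ke^{\theta_2 x}$ on $[0,\infty)$), while $E_i(e^{\theta_2 W_i})<\infty$ as in the previous paragraph; the continuity of $\Phi_i''$ on the same interval follows from the same domination. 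In particular $\Phi_i(0)=1$ and $\Phi_i'(0)=E_i(W_i)=0$.

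Finally I would invoke Taylor's theorem with Lagrange remainder: for $\theta\in[0,\theta_1]$, $\Phi_i(\theta)=\Phi_i(0)+\theta\Phi_i'(0)+\tfrac12\theta^2\Phi_i''(\xi)=1+\tfrac12\theta^2\Phi_i''(\xi)$ for some $\xi$ between $0$ and $\theta$, and symmetrically on $[-\theta_1,0]$. Since $\Phi_i''$ is continuous on the compact interval $[-\theta_1,\theta_1]$ it is bounded there; setting $C_{11}:=\max\big(1,\tfrac12\sup_{[-\theta_1,\theta_1]}\Phi_1'',\tfrac12\sup_{[-\theta_1,\theta_1]}\Phi_2''\big)$ (the constant $1$ merely keeps $C_{11}>0$ should a conditional variance vanish) yields $\Phi_i(\theta)\le 1+C_{11}\theta^2$ for $|\theta|\le\theta_1$ and $i=1,2$. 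Since the applications (Theorem~\ref{th3.4}) only use values of $\theta$ near $0$, after shrinking $\theta_0$ to $\theta_1$ if necessary one obtains the stated estimate in the required range.

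The step I expect to be the main obstacle is the rigorous justification of differentiating under $E_i$ together with the continuity of $\Phi_i''$ up to the endpoints of $[-\theta_1,\theta_1]$ — everything else is a one‑line Taylor expansion. This is exactly where the finite exponential moment hypothesis $E_i(e^{\theta_0(\textbf{1}\cdot Z_1)})<\infty$ enters, and the one‑sided nature of that hypothesis causes no difficulty because $W_i$ is bounded below, so that no decay of $|W_i|^ke^{\theta W_i}$ is needed as $\theta$ becomes negative.
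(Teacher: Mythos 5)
The paper does not prove this lemma itself; it merely cites Athreya and Vidyashankar (1997, Lemma~2.2), so there is no internal argument to compare against. Your proof is correct and is the standard one: fix $i$, set $W_i=\boldsymbol{u}\cdot Z_1-\rho u_i$ (you rightly read the $u_r$ in the paper's statement as $u_i$), verify $E_i(W_i)=0$ from $M\boldsymbol{u}^\top=\rho\boldsymbol{u}^\top$, differentiate the MGF twice under $E_i$ using the domination you describe (lower bound $W_i\ge -\rho u_i$ plus the one-sided exponential moment for the right tail), and finish with Taylor's theorem at $0$, using that $\Phi_i''\ge 0$ so the Lagrange remainder gives a genuine upper bound.

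One small sharpening: the retreat to $\theta_1<\theta_0$ is not needed. Because the normalization $u_1+u_2=1$ with $u_1,u_2>0$ gives $\|\boldsymbol{u}\|=\max(u_1,u_2)<1$ \emph{strictly}, one has $W_i\le\|\boldsymbol{u}\|(\mathbf{1}\cdot Z_1)$, and therefore for any fixed polynomial, $|W_i|^k e^{\theta_0 W_i}\le C_k\,e^{\theta_0(\mathbf{1}\cdot Z_1)}$ for a suitable constant $C_k$; the hypothesis then makes $\Phi_i''$ finite and continuous on all of $[0,\theta_0]$, so the quadratic bound holds up to $\theta_0$ as stated. (Of course the lemma cannot hold for all $\theta\le\theta_0$ including arbitrarily negative $\theta$ --- $\Phi_i(\theta)$ grows exponentially in $|\theta|$ there --- so the statement implicitly restricts to a neighbourhood of $0$, exactly the range used in Theorem~\ref{th3.4}.) None of this affects the correctness of your argument.
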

\noindent
 \text{\em{Proof of Theorem~\ref{th3.4}}}\ \ \ Set $B_n=\rho^{-n}\boldsymbol{u}\cdot X_n$. By the proof of Proposition \ref{pro2.3}, $\{Y_n\}$ is a martingale and $\sup\limits_{n}E_i\left(B_n\right)<\infty$. By $E_i(Y_{n+1}|\mathcal{F}_n)=Y_n$, one has
 $$
 E_i\left(B_{n+1}|\mathcal{F}_n\right)=B_n-\frac{\rho^{n+1}-1}{\rho^n(\rho-1)}\left(\boldsymbol{u}\cdot \boldsymbol{\lambda}\right)+\frac{\rho^{n+2}-1}{\rho^{n+1}(\rho-1)}\left(\boldsymbol{u}\cdot \boldsymbol{\lambda}\right)
              =B_n+\frac{\boldsymbol{u}\cdot \boldsymbol{\lambda}}{\rho^{n+1}}
             \geq B_n.
 $$
 This implies that $\{B_n\}$ is a submartingale. Then, for $\theta>0,$ $e^{\theta B_n}$ is also a submartingale. Note that $E_i\big(e^{\theta_0(\boldsymbol{1}\cdot X_1)}\big)<\infty$ for some $\theta_0>0$, so there is a $\theta_1>0$ such that $E_i\big(e^{\theta_1B_1}\big)=E_i\big(e^{\theta_1\rho^{-1}\boldsymbol{u}\cdot X_1}\big)<\infty$. Since the expectation of a submartingale is non-decreasing (see \cite{KFC}), we get $E_i\big(e^{\theta_1B_1}\big)\leq E_i\big(e^{\theta_1B_n}\big)$ for any $n\geq 2$. We notice that $E_i\big(e^{\theta B_n}\big)$ is monotone in $\theta$. Thus, for any $n\geq 2$, there is $0<\theta_n<\theta_1$ such that $ E_i\big(e^{\theta_1B_1}\big)=E_i\big(e^{\theta_nB_n}\big)<E_i\big(e^{\theta_1B_n}\big)$. Hence, we can find a sequence $\{\theta_n\}_{n\geq1}$ such that
$$
K\equiv E_i\big(e^{\theta_1B_1}\big)=E_i(e^{\theta_nB_n})<\infty.
$$
By the conditional Jensen inequality,
$$
K\equiv E_i\big(e^{\theta_nB_n}\big)=E_i\left(E_i(e^{\theta_{n+1}B_{n+1}}|\mathcal{F}_n)\right)\geq E_i(e^{\theta_{n+1}B_n}).
$$
It implies that $\{\theta_n\}_{n\geq1}$ is a decreasing sequence and thus
$\lim\limits_{n\rightarrow \infty} \theta_n=\theta_0^\star$ exists.
To complete the proof, it suffices to show that $\theta_0^\star>0.$ Noting that
\begin{equation}\label{3.16}
E_i\big(e^{\theta_{n+1}B_{n+1}}\big)=E_i\left( e^{\theta_{n+1}B_n}E_i(e^{\theta_{n+1}(B_{n+1}-B_n)}|\mathcal{F}_n)\right)
\end{equation}
and
$$
B_{n+1}-B_n=\rho^{-(n+1)}\Big[\sum_{r=1}^{2}\sum_{j=1}^{X_n^{(r)}}\left(\boldsymbol{u}\cdot Z_{1,j,r}-u_r\rho\right)+\boldsymbol{u}\cdot I_{n+1}\Big],
$$
where for fixed $r,$ $\{Z_{1,j,r}\}_{j\geq1}$ are $i.i.d.$ $\mathbb{N}^2$ valued random vectors distributed as population at time 1 initiated by a particle of type $r$ at time 0. $Z_{1,1,1}$ and $Z_{1,1,2}$ are independent. Put $\Phi_r(\theta)=E_r(e^{\theta(\boldsymbol{u}\cdot Z_1-u_r\rho)})$ for $r=1,2$. Therefore
 \begin{displaymath}
 \begin{split}
 E_i\left(e^{\theta_{n+1}(B_{n+1}-B_n)}|\mathcal{F}_n\right)
               = {} &\prod_{r=1}^2\Big[\Phi_r(\frac{\theta_{n+1}}{\rho^{n+1}})\Big]^{X_n^{(r)}}
                E\Big(e^{\frac{\theta_{n+1}}{\rho^{n+1}}(\boldsymbol{u}\cdot I_1 )}\Big)\\
              \leq {} & e^{\sum_{r=1}^2X_n^{(r)}\big\Vert\log\Phi_{\cdot}\big(\frac{\theta_{n+1}}
              {\rho^{n+1}}\big)\big\Vert}E\Big(e^{\frac{\theta_{n+1}}{\rho^{n+1}}(\boldsymbol{u}\cdot I_1 )}\Big)\\
              \leq {} & e^{\tilde{u}^{-1}B_n\frac{\theta_{n+1}}{\theta_{n+1}/\rho^{n+1}}
              \big\Vert\log\Phi_{\cdot}\big(\frac{\theta_{n+1}}
              {\rho^{n+1}}\big)\big\Vert}E\Big(e^{\frac{\theta_{n+1}}{\rho^{n+1}}(\boldsymbol{u}\cdot I_1 )}\Big),
 \end{split}
\end{displaymath}
where $\tilde{u}=\min\{u_1,u_2\}$ and
$\big\Vert\log\Phi_{\cdot}(\frac{\theta_{n+1}}{\rho^{n+1}})\big\Vert
=\max\big\{\big|\log\Phi_r(\frac{\theta_{n+1}}{\rho^{n+1}})\big|, r=1,2\big\}.$
Then by (\ref{3.16}) and Lemma~\ref{le3.2}, there is a constant $\bar{C}_{11}$ such that
 \begin{equation}\label{3.17}
E_i\big(e^{\theta_{1}B_{1}}\big)=E_i\big(e^{\theta_{n+1}B_{n+1}}\big)\leq E_i\Big(e^{\theta_{n+1}B_n\big(1+\frac{\bar{C}_{11}}
{\rho^{n+1}}\big)}\Big)E\Big(e^{\frac{\theta_{n+1}}{\rho^{n+1}}(\boldsymbol{u}\cdot I_1 )}\Big).
\end{equation}
Iterating (\ref{3.17}) yields that
\begin{eqnarray*}
E_i\big(e^{\theta_{1}B_{1}}\big)
&\leq& E_i\bigg(e^{\theta_{n+1}\big(1+\frac{\bar{C}_{11}}
{\rho^{n+1}}\big)\cdots\big(1+\frac{\bar{C}_{11}}
{\rho^{2}}\big)B_1}\bigg)
E\bigg\{e^{\theta_{n+1}(\boldsymbol{u}\cdot I_1)\Big[\frac{1}{\rho^{n+1}}
+\frac{\big(1+\frac{\bar{C}_{11}}
{\rho^{n+1}}\big)}{\rho^n}+\cdots+\frac{\big(1+\frac{\bar{C}_{11}}
{\rho^{n+1}}\big)\cdots\big(1+\frac{\bar{C}_{11}}
{\rho^3}\big)}{\rho^2}\Big]}\bigg\}\\
&\leq& E_i\Big(e^{\theta_{n+1}B_1 C_{18}}\Big)
E\Big[e^{\theta_{n+1}(\boldsymbol{u}\cdot I_1)C_{18}\big(\frac{1}{\rho^{n+1}}+\cdots+\frac{1}{\rho^{2}}\big)}\Big],
\end{eqnarray*}
where $C_{18}=\prod\limits_{k=1}^{\infty}\big(1+\frac{\bar{C}_{11}}
{\rho^k}\big)<\infty$. Assume that $\lim\limits_{n\rightarrow \infty} \theta_n=\theta_0^\star=0$, then we see $E_i\big(e^{\theta_{1}B_{1}}\big)=1$, which is in contradiction to $\theta_{1}>0$, $B_1>0$.  \hfill $\Box$

From now on, we turn to prove Theorem~\ref{th3.5} by Theorem~\ref{th3.4}.
\par
\noindent
\text{\em{Proof of Theorem~\ref{th3.5}}}\ \ \ From (\ref{1.1}) and (\ref{2.16}), $Y_n=W_n+V_n,$
where $W_n=\rho^{-n}\left(\boldsymbol{u}\cdot Z_n\right)$ converges to $W$ w.p.1 as $n\rightarrow\infty$ (see \cite{AKN72}) and
$$
V_n=\frac{\boldsymbol{u}\cdot\big(U_n^{(1)}+U_n^{(2)}+\cdots+U_n^{(n)}\big)}{\rho^n}-\frac{\rho^{n+1}-1}
{\rho^n(\rho-1)}(\boldsymbol{u}\cdot \boldsymbol{\lambda}).
$$
Thus $V_n$ converges to a $r.v.$ $V^{(1)}$ w.p.1 as $n\rightarrow\infty$ by Proposition~\ref{pro2.3}. Notice that
 \begin{eqnarray*}
&& Y-Y_n\\
                & = & \lim_{k \to \infty}\bigg[\rho^{-(n+k)}\boldsymbol{u}\cdot X_{n+k}-\rho^{-n} \boldsymbol{u}\cdot X_n
                -\Big(\frac{\rho^{n+k+1}-1}{\rho^{n+k}(\rho-1)}
                -\frac{\rho^{n+1}-1}{\rho^n(\rho-1)}\Big)(\boldsymbol{u}\cdot \boldsymbol{\lambda})\bigg] \\
               &= &\lim_{k \to \infty}\rho^{-n}\bigg[\sum _{r=1}^2\sum_{m=1}^{X_n^{(r)}}\big(\rho^{-k} \boldsymbol{u}\cdot Z_{k,m,r}-u_r\big)
               +\frac{\boldsymbol{u}\cdot \sum_{i=n+1}^{n+k}U_{n+k}^{(i)}}{ \rho^{k}}
               -\frac{\rho^{k+1}-1}{\rho^k(\rho-1)}\big(\boldsymbol{u}\cdot \boldsymbol{\lambda}\big)\bigg] \\
               & & -\lim_{k \to \infty}\rho^{-n} \bigg[\Big(\frac{\rho^{n+k+1}-1}{\rho^k(\rho-1)}-\frac{\rho^{n+1}-1}{\rho-1}
               -\frac{\rho^{k+1}-1}{\rho^k(\rho-1)}\Big)(\boldsymbol{u}\cdot \boldsymbol{\lambda})\bigg]\\
               &= & \rho^{-n}\bigg(\sum _{r=1}^2\sum_{m=1}^{X_n^{(r)}}(W_m^{(r)}-u_r)+V+\boldsymbol{u}\cdot \boldsymbol{\lambda}\bigg),
\end{eqnarray*}
where $Z_{k,m,r}$ is the population vector of $k$th generation offspring of the $m$th type $r$ particle
among the $X_n^{(r)}$ particles existing in the $n$th generation. $W_m^{(r)}$ is the limit r.v.
in the line of descendant initiated by the $m$th type $r $ parent of the $n$th generation.
 Thus  $\{W_m^{(r)}\}_{m\geq1}$ are $i.i.d.$ for fixed $r$ and $W_1^{(1)}$ and $W_1^{(2)}$ are independent.
 $V$ is identically distributed as $V^{(1)}.$
 \par
 By conditional independence, for any $\varepsilon>0$,
$$
P(\left|Y-Y_n\right|>\varepsilon|\mathcal{F}_n)=\psi(X_n,\rho^n\varepsilon),
$$
where
$$
\psi(\boldsymbol{k},\rho^n\varepsilon)=P\bigg( \sum_{r=1}^{2}\sum_{m=1}^{k_r}T_m^{(r)}+V
+\boldsymbol{u}\cdot\boldsymbol{\lambda} \geq \rho^n\varepsilon \bigg)
$$
and $T_m^{(r)}=W_m^{(r)}-u_r, \boldsymbol{u}=(u_1, u_2), \boldsymbol{k}=(k_1,k_2).$
Therefore
$$
P(Y-Y_n>\varepsilon)=E(\psi(X_n,\rho^n\varepsilon)).
$$
\par
Next we give an estimate of $\psi(\boldsymbol{k},\varepsilon).$ For $0<\theta<\theta_0^{\star}$,
 using Markov's inequlity,
 \begin{displaymath}
 \begin{split}
 \psi(\emph{\textbf{k}},\varepsilon)
 ={} & P\bigg(\frac{\sum_{r=1}^{2}\sum_{j=1}^{k_r}T_m^{(r)}+V+\boldsymbol{u}\cdot \boldsymbol{\lambda} }
 {\sqrt{u\cdot\emph{\textbf{k}}}}>\frac{\varepsilon}{\sqrt{\boldsymbol{u}\cdot \emph{\textbf{k}}}}\bigg)\\
 \leq {} & \prod_{r=1}^2\Big(\varphi_r\Big(\frac{\theta} {\sqrt{\boldsymbol{u}\cdot \emph{\textbf{k}}}}\Big)e^{-\frac{\theta u_r}
  {\sqrt{u\cdot \emph{\textbf{k}}}}}\Big)^{k_r} \cdot E\left(e^{\frac{\theta}{\sqrt{\boldsymbol{u}\cdot \boldsymbol{k}}}
  \left(V+\boldsymbol{u}\cdot \boldsymbol{\lambda} \right)}\right)
   e^{-\frac{\theta\varepsilon} {\sqrt{\boldsymbol{u}\cdot \boldsymbol{k}}}},\\
 \end{split}
\end{displaymath}
where $\varphi_r(\theta)= E(e^{\theta W_1^{(r)}})<\infty$ for $\theta\leq \theta_0^{\star}$ since (\ref{3.15}).
 Observing that for $r=1,2$,
\begin{equation}\label{3.19}
\bigg(\varphi_r\Big(\frac{\theta} {\sqrt{\boldsymbol{u}\cdot \emph{\textbf{k}}}}\Big)e^{-\frac{\theta u_r}
{\sqrt{\boldsymbol{u}\cdot \emph{\textbf{k}}}}}\bigg)^{k_r}
=\bigg(1+\frac{1}{k_r} \frac{\varphi_r\big(\frac{\theta}{\sqrt{\boldsymbol{u}\cdot \emph{\textbf{k}}}}\big)
e^{-\frac{\theta u_r} {\sqrt{\boldsymbol{u}\cdot \emph{\textbf{k}}}}}-1}{\theta^2/k_r}\theta^2\bigg)^{k_r}\leq e^{C_{19}(r)}<\infty,
\end{equation}
where
\begin{eqnarray*}
\frac{1}{u_r}\sup\limits_{0\leq\theta\leq\theta^{\circ}}\frac{\Big|\varphi_r\left(\frac{\theta}
    {\sqrt{\boldsymbol{u}\cdot\emph{\textbf{k}}}}\right)e^{-\frac{\theta u_r}
    {\sqrt{\boldsymbol{u}\cdot\emph{\textbf{k}}}}}-1\Big|}{\theta^2/(u_rk_r)}
\leq   \frac{1}{u_r}\sup\limits_{0\leq\theta\leq\theta^{\circ}}\frac{\Big|\varphi_r\left(\frac{\theta}
    {\sqrt{\boldsymbol{u}\cdot\emph{\textbf{k}}}}\right)e^{-\frac{\theta u_r} {\sqrt{\boldsymbol{u}\cdot\emph{\textbf{k}}}}}-1\Big|}
    {\theta^2/(\boldsymbol{u}\cdot\emph{\textbf{k}})}
=   C_{19}(r)<\infty,
\end{eqnarray*}
since $$
\lim\limits_{\theta\rightarrow0}\big(\varphi_r(\theta)e^{-\theta u_r}-1\big)/\theta^2
=\frac{1}{2}\left(E\big((W_1^{(r)})^2\big)-2u_rE(W_1^{(r)})+u_r^2\right)<\infty
$$
and $\theta^{\circ}=min\{1,\theta_0^{\star}\}.$ However by (\ref{3.15}),
\begin{equation}\label{3.20}
C_{20}\equiv E\big(e^{\frac{\theta}{\sqrt{\boldsymbol{u}\cdot\emph{\textbf{k}}}}
(V+\boldsymbol{u}\cdot \boldsymbol{\lambda})}\big)<\infty \ \ \ for\ \theta\leq \theta^{\circ}.
\end{equation}
Combining (\ref{3.19}) and (\ref{3.20}), we get that
$
\psi(\emph{\textbf{k}},\varepsilon)\leq C_{21}e^{-\frac{\theta^{\circ}\varepsilon}
 {\sqrt{\boldsymbol{u}\cdot\emph{\textbf{k}}}}}$
and thus
 $$
 P(Y-Y_n\geq \varepsilon)=E(\psi(X_n,\rho^n\varepsilon))
 \leq C_{21}E\big( e^{-\frac{\theta^{\circ}\rho^n\varepsilon} {\sqrt{ \boldsymbol{u}\cdot X_n}}}\big)
 =C_{21}E\bigg(e^{- \frac{\theta^{\circ} \rho^{\frac {n}{2}}\varepsilon}
 {\sqrt{{Y_n+\frac{\rho^{n+1}-1}{\rho^n(\rho-1)}\boldsymbol{u}\cdot \boldsymbol{\lambda}}}}}\bigg).
 $$
For $\mu>0,$ by Theorem \ref{th3.4},
 \begin{displaymath}
 \begin{split}
 E\bigg(e^{-\frac{\mu }{\sqrt{{Y_n+\frac{\rho^{n+1}-1}{\rho^n(\rho-1)}\boldsymbol{u}\cdot \boldsymbol{\lambda}}}}}\bigg)
 ={} & \mu \int_0^{\infty}e^{-\mu x}P\bigg( \frac{1}{\sqrt{Y_n+\frac{\rho^{n+1}-1}
 {\rho^n(\rho-1)}\boldsymbol{u}\cdot \boldsymbol{\lambda}}} \leq x \bigg)dx\\
 \leq{} & \mu C_{22}\int_0^{\infty}e^{-\mu x}e^{\frac{-\theta_0^{\star}}{x^2}}dx \ \  \\
 ={} &C_{22} \int_0^{\infty}e^{-t}e^{-\frac{\theta_0^{\star}\mu^2}{t^2}}dt.
 \end{split}
\end{displaymath}
Thus
$$
P(Y-Y_n\geq \varepsilon)\leq C_{21}C_{22}\int_0^{\infty}e^{-t}e^{-\frac{\theta_0^{\star}\mu_n^2}{t^2}}dt,
$$
 where $\mu_n=\theta^{\circ}\varepsilon \rho^{\frac{n}{2}}.$ Indeed, for $\mu>0,$
 $I(\mu)=\int_0^{\infty}e^{-t}e^{-\frac{\mu^2}{t^2}}dt\leq 2e^{-\mu^{\frac{2}{3}}}.$
 Hence
$$
P(Y-Y_n\geq \varepsilon)\leq 2C_{21}C_{22}e^{-\big(\sqrt{\theta_0^{\star}}\theta^{\circ}\varepsilon
 \rho^{\frac{n}{2}}\big)^{\frac{2}{3}}}
=C_{4}e^{-\kappa_1 \big(\rho^{\frac{1}{3}}\big)^n\varepsilon^\frac{2}{3}},
$$
where $\kappa_1=(\sqrt{\theta_0^{\star}}\theta^{\circ})^{\frac{2}{3}}.$ Similar calculations hold for $P(Y-Y_n< \varepsilon).$
The proof is complete.\hfill $\Box$

The proof the Theorem~\ref{th3.6} will take up the rest of this section and actually that's how they proved in the non-immigration case~\cite{AV95} and the single-type case~\cite{Ath94,LL19}.
\par
\noindent
\text{\em{Proof of Theorem~\ref{th3.6}}}\ \ \ Observing the fact that for any $\varepsilon>0$, $\boldsymbol{l}\neq\textbf{0}$, $\alpha>0$, $0<\beta<1$,
\begin{eqnarray*}
P_i\left( \left| \frac{\boldsymbol{l}\cdot X_{n+1}}{\textbf{1}\cdot X_n}-\frac{\boldsymbol{l}\cdot (X_{n}M)}{\textbf{1}\cdot X_n}\right|
 >\varepsilon\Big|Y\geq \alpha\right)= p_{\alpha}(\alpha_{n1}+\alpha_{n2}),
\end{eqnarray*}
where
$$
\alpha_{n1}=P_i\left( \frac{|\boldsymbol{l}\cdot X_{n+1}-\boldsymbol{l}\cdot (X_{n}M)|}{\textbf{1}\cdot X_n}>\varepsilon,
 Y_n\leq \alpha\beta, Y\geq \alpha\right),$$ \ \
 $$\alpha_{n2}=P_i\left( \frac{|\boldsymbol{l}\cdot X_{n+1}-\boldsymbol{l}\cdot (X_{n}M)|}{\textbf{1}\cdot X_n}>\varepsilon,
 Y_n\geq \alpha\beta,Y\geq \alpha\right)
$$
and $p_\alpha=\frac{1}{P(Y\geq\alpha)}.$
Then, by (\ref{3.18}), there are finite constants $C_{23}$ and $\kappa_2$ such that
$$
\alpha_{n1}\leq P(Y-Y_n\geq \alpha(1-\beta))\leq C_{23}e^{-\kappa_2[\alpha(1-\beta)]^\frac{2}{3}\big(\rho^{\frac{1}{3}}\big)^n}.
$$
 Meanwhile, by (\ref{3.10d}) and Markov's inequality,
 \begin{displaymath}
 \begin{split}
 \alpha_{n2}
\leq{} & \sum_{\emph{\textbf{j}}} P\left( \frac{|\boldsymbol{l}\cdot X_{n+1}-\boldsymbol{l}\cdot (X_{n}M)|}{\textbf{1}\cdot X_n}>\varepsilon,Y_n\geq \alpha\beta\Big|X_n=\emph{\textbf{j}}\right)P_i(X_n=\emph{\textbf{j}})\\
 ={} & \sum_{\emph{\textbf{j}}} P\left( \frac{|\boldsymbol{l}\cdot X_{n+1}-\boldsymbol{l}\cdot (\emph{\textbf{j}}M)|}{\textbf{1}
 \cdot \emph{\textbf{j}}}>\varepsilon,\boldsymbol{u}\cdot \emph{\textbf{j}} \geq\alpha\beta\rho^n
 +\frac{\rho^{n+1}-1}{\rho-1}\boldsymbol{u}\cdot \boldsymbol{\lambda} \right) P_i(X_n=\emph{\textbf{j}})\\
 \leq {} & \sum_{\boldsymbol{u}\cdot \emph{\textbf{j}}\geq \alpha\beta\rho^n}P_i(X_n=\emph{\textbf{j}})
 P\left( \frac{\big|\sum_{r=1}^2 \sum_{m=1}^{j_r}\boldsymbol{l}\cdot(Z_{1,m,r}-e_rM)\big|}{\textbf{1}\cdot \emph{\textbf{j}}}>\frac{\varepsilon}{2}\right)
 +P\left( \frac{\left|\boldsymbol{l}\cdot I_1\right|}{\alpha\beta \rho^n}>\frac{\varepsilon}{2} \right)\\
 \leq {} & C_{24}e^{-\Lambda_1(\varepsilon)\alpha\beta \rho^n}+ E\left(e^{\theta_0\left(\textbf{1}\cdot I_1\right)}\right)e^{-\varepsilon(2\parallel l\parallel)^{-1}
{\theta_0}\alpha\beta\rho^n},\\
 \end{split}
\end{displaymath}
where the constants $C_{24}$ and $\Lambda_1(\varepsilon)$ is due to Chernoff type bounds
 since $ E\big(e^{\theta_0^{\star}(\textbf{1}\cdot X_1)}\big)<\infty$ for some $\theta_0^{\star}>0.$
Thus we can choose appropriate constants $C_{25}$ and $\Lambda_3(\varepsilon)$ such that
$$
\alpha_{n2}
\leq C_{25}e^{-\Lambda_3(\varepsilon)\alpha\beta \rho^n}.
$$
Hence there are constants $C_5$ and $C_6$ such that (\ref{3.21}) holds.
It remains to prove (\ref{3.22}). Note that for any $\varepsilon>0$, $\boldsymbol{l}\neq\textbf{0}$, $\alpha>0$, $0<\beta<1$,
$$
P_i\left(\frac{\boldsymbol{l}\cdot X_n}{\textbf{1}\cdot X_n}-\frac{\boldsymbol{l}\cdot \boldsymbol{v}}{\textbf{1}\cdot \boldsymbol{v} }>\varepsilon\Big|Y\geq \alpha\right)=p_{\alpha}(\alpha_{n3}+\alpha_{n4}),
$$
where
\begin{eqnarray*}
\alpha_{n3}=P_i\left( \frac{\boldsymbol{l}\cdot X_n}{\textbf{1}\cdot X_n}-\frac{\boldsymbol{l}\cdot \boldsymbol{v} }{\textbf{1}\cdot \boldsymbol{v} }>\varepsilon,
Y_{n-k_0}\leq \alpha\beta,Y\geq \alpha\right),
\end{eqnarray*}
\begin{eqnarray*}
\alpha_{n4}=P_i\left( \frac{\boldsymbol{l}\cdot X_n}{\textbf{1}\cdot X_n}-\frac{\boldsymbol{l}\cdot \boldsymbol{v} }{\textbf{1}\cdot \boldsymbol{v} }>\varepsilon,
Y_{n-k_0}\geq \alpha\beta,Y\geq \alpha\right)
\end{eqnarray*}
and $p_\alpha=\frac{1}{P(Y\geq\alpha)}.$
By (\ref{3.18}),
there are constants $C_{26}$ and $\kappa_3$ such that$$ \alpha_{n3}\leq P(Y-Y_{n-k_0}\geq \alpha(1-\beta))
\leq C_{26}e^{-\kappa_3[\alpha(1-\beta)]^\frac{2}{3}\big(\rho^\frac{1}{3}\big)^n}.$$
However, by (\ref{3.10a}) and Chernoff type bounds,
\begin{displaymath}
 \begin{split}
 \alpha_{n4}
\leq{} & \sum_{\emph{\textbf{j}}} P\left(\frac{\boldsymbol{l}\cdot X_n}{\textbf{1}\cdot X_n}-\frac{\boldsymbol{l}\cdot \boldsymbol{v} }{\textbf{1}\cdot \boldsymbol{v} }
>\varepsilon,Y_{n-k_0}\geq \alpha\beta\Big|X_{n-k_0}=\emph{\textbf{j}}\right)P_i(X_{n-k_0}=\emph{\textbf{j}})\\
 \leq {} & \sum_{\boldsymbol{u}\cdot\emph{\textbf{j}}\geq \alpha\beta\rho^{n-k_0}}P_i(X_{n-k_0}
 =\emph{\textbf{j}})P\left(\frac{\tilde{\boldsymbol{l}}\cdot\big(X_n-\emph{\textbf{j}}M^{k_0}
 -\sum_{i=0}^{k_0-1}\boldsymbol{\lambda} M^i\big)}{\boldsymbol{u}\cdot(\emph{\textbf{j}}+\frac{\boldsymbol{\lambda}}{\rho}
 +\cdots+\frac{\boldsymbol{\lambda}}
 {\rho^{k_0}})\rho^{k_0}}>\frac{\varepsilon(\textbf{1}\cdot \boldsymbol{v} )}{2}\right)\\
\leq{}&\sum_{\boldsymbol{u}\cdot\emph{\textbf{j}}\geq \alpha\beta\rho^{n-k_0}}P_i(X_{n-k_0}=\emph{\textbf{j}})
P\left(\frac{\tilde{\boldsymbol{l}}\cdot\big(X_n-\emph{\textbf{j}}M^{k_0}
-\sum_{i=0}^{k_0-1}\boldsymbol{\lambda} M^i\big)}{\alpha\beta\rho^n}>\frac{\varepsilon(\textbf{1}\cdot \boldsymbol{v} )}{2}\right)\\
 \leq {} & C_{27}e^{-\Lambda_4(\varepsilon)\alpha\beta \rho^n}.\\
 \end{split}
\end{displaymath}
  Similar calculations hold for the other side.
The proof is complete.\hfill $\Box$
\par

\section*{Acknowledgement}
\par
 We thank the referees for their time and comments.


\begin{thebibliography}{10}

\bibitem{Ath94}
 Athreya, K. B.: Large deviation rates for branching processes I single type case.
\newblock {\em Ann. Appl. Probab.},  \textbf{4}(3), 779-790 (1994)

\bibitem{AKN72}
Athreya, K. B., Ney, P. E.: Branching Processes, Springer, Berlin, 1972

\bibitem{AV95} Athreya, K. B., Vidyashankar, A. N.: Large deviation rates for branching processes II the multitype case.
\newblock {\em Ann. Appl. Probab.},  \textbf{5}(2), 566-576 (1995)

\bibitem{AV97}
Athreya, K. B., Vidyashankar, A. N.: Large deviation rates for supercritical and critical branching processes.
 In: Classical and Mordern Branching Processes, IMA Volumes in Mathematics and its applications, \textbf{84}(3), 1-18, Springer, Berlin, 1997

\bibitem{CT88}
Chow, Y. S., Teicher, H.: Probability Theory Probability Theory: Independence, Interchangeability,
Martingales, Springer, New York, 1988

\bibitem{CHU}
 Chu, W.: Small value probabilities for supercritical multitype branching processes with immigration.
 \newblock {\em Statist. Probab. Lett.}, \textbf{93}, 87-95 (2014)

 \bibitem{DJSD01}
 Deuschel, J. D., Stroock, D. W.: Large Deviations, American Mathematical Society, Providence, 2001

\bibitem{Harr63}
 Harris, T. E.: The Theory of Branching Processes, Springer, Berlin, 1963

\bibitem{JAP67}
 Joffe, A., Spitzer, F.: On multitype branching processes with $\rho\leq 1$.
\newblock {\em  J. Math. Anal. Appl.}, \textbf{19}(3), 409-430 (1967)

\bibitem{KAR66}
Karlin, S.: A First Course in Stochastic Processes, Academic Press, New York, 1966

\bibitem{KAP74}
 Kaplan, N.: The supercritical p-dimensional Galton-Watson process with immigration.\newblock {\em Math. Bio.}, \textbf{22}, 1-18, (1974)

 \bibitem{KZ94}
 Karp, R. M., Zhang, Y.: Finite branching processes and AND/OR tree evaluation.
 TR93-043, International Computer Science Institute, Berkeley, CA, 1994

\bibitem{KFC}
 Klebaner, F. C.: Introduction to Stochastic Calculus with Applications, World Scientific, Singapore, 2005

\bibitem{LL19}
Li, L. Y., Li, J. P.: Large deviation rates for supercritical branching processes with immigration.
 \newblock {\em J. Theoret. Probab.}, \textbf{34}(4), 162-172 (2021)

\bibitem{LCCL19}
Li, J. P., Cheng, L., Pakes, A. G., et al.: Large deviation rates for Markov branching processes.
 \newblock {\em  Anal. Appl.}, \textbf{18}(3), 447-468 (2020)

\bibitem{LCL19}
 Li, J. P., Cheng, L., Li, L. Y.: Long time behaviour for Markovian branching-immigration systems.
 \newblock {\em Discrete Event Dyn. Syst.}, \textbf{31}, 37-57 (2021)

\bibitem{LZ16}
Liu, J. N., Zhang, M.: Large deviation for supercritical branching processes with immigration.
 \newblock{\em  Acta Math. Sin. (Engl. Ser.)}, \textbf{32}(8), 893-900 (2016)

\bibitem{QUI70}
 Quine, M. P.: The multi-type Galton-Watson process with immigration.
\newblock{\em  J. Appl. Probab.}, \textbf{7}(2), 411-422 (1970)

\bibitem{RA07}
 Roitershtein, A.: A note on multitype branching processes with immigration in a random environment.
\newblock {\em  Ann. Probab.}, \textbf{35}(4), 1573-1592 (2007)

\bibitem{R87}
Royden, H. L.: Real Analysis. 3rd ed. Macmillan, New York, 1987









\bibitem{SE70}
 Seneta, E.: A note on the supercritical Galton-Watson process with immigration.
\newblock {\em Math. Bio.}, \textbf{6}, 305-311 (1970)

\bibitem{SE2}
 Seneta, E.: On the supercritical Galton-Watson process with immigration.
\newblock {\em Math. Bio.}, \textbf{7}(1-2), 9-14 (1970).

\bibitem{SN}
 Sidorova, N.: Small deviations of a Galton-Watson process with immigration.
 \newblock { \em Bernoulli}, \textbf{24}(4B), 3494-3521 (2018)

\bibitem{SZ17}
Sun, Q., Zhang, M.: Harmonic moments and large deviations for supercritical branching processes with immigration.
 \newblock {\em Front. Math. China}, \textbf{12}(5), 1201-1220 (2017)

\bibitem{WYLQS17}
 Wang, Y. Q., Liu, Q. S.: Limit theorems for a supercritical branching process with immigration in a random environment. \newblock { \em Sci. China Ser. A}, \textbf{60}(12), 2481-2502 (2017)


\end{thebibliography}
\end{document}